\documentclass[a4paper,reqno]{amsart}

\usepackage[utf8]{inputenc}
\usepackage{amssymb}
\usepackage{enumitem}
\usepackage{mathrsfs}
\usepackage{mathtools}
\usepackage{color}
\usepackage[colorlinks=true]{hyperref}
\hypersetup{urlcolor=green,citecolor=green}

\setlist[enumerate]{label=\emph{(\roman*)}}

\usepackage{environ}

\newtheorem{theorem}{Theorem}[section]
\newtheorem{corollary}[theorem]{Corollary}
\newtheorem{lemma}[theorem]{Lemma}
\newtheorem{proposition}[theorem]{Proposition}

\theoremstyle{definition}

\newtheorem{remark}[theorem]{Remark}
\numberwithin{equation}{section}
\newcommand{\R}{\mathbb{R}}

\def \d {{\rm{d}}}

\begin{document}

\parindent=0pt

	\title[Asymptotic behavior of solutions]
	{Asymptotic behavior of 2D wave--Klein-Gordon coupled system under null condition}
	
		\author[S.~Dong]{Shijie Dong}
	\address{Fudan University, School of Mathematical Sciences, 220 Handan Road, Shanghai, 200433, P.R. China.}
	\email{shijiedong1991@hotmail.com}
	
		\author[Y.~Ma]{Yue Ma}
	\address{Xi'an Jiaotong University, School of Mathematics and Statistics, 28 West Xianning Road, Xi'an Shaanxi 710049, P.R. China.}
	\email{yuemath@xjtu.edu.cn}

	\author[X.~Yuan]{Xu Yuan}
	\address{Department of Mathematics, The Chinese University of Hong Kong, Shatin, N.T., Hong Kong, P.R. China.}
	\email{xu.yuan@cuhk.edu.hk}
	\begin{abstract}
    We study the 2D coupled wave--Klein-Gordon systems with semilinear null nonlinearities $Q_0$ and $Q_{\alpha\beta}$. The main result states that the solution to the 2D coupled systems exists globally provided that the initial data are small in some weighted Sobolev space, which do not necessarily have compact support, and we also show the optimal time decay of the solution.
    
    The major difficulties lie in the slow decay nature of the wave and the Klein-Gordon components in two space dimensions, in addition, extra difficulties arise due to the presence of the null form $Q_0$ which is not of divergence form and is not compatible with the Klein-Gordon equations. To overcome the difficulties, a new observation for the structure of the null form $Q_{0}$ is required.
	\end{abstract}
	\maketitle
\section{Introduction}

\subsection{The model problem}
In this article, we consider the 2D coupled wave--Klein-Gordon system under null condition,
\begin{equation}\label{equ:waveKG}
\begin{aligned}
-\Box w&=C_{1}Q_{0}(w,v)+C_{1}^{\alpha \beta}Q_{\alpha\beta}(w,v),\quad (t,x)\in [0,\infty)\times \R^{2},\\
-\Box v +v&=C_{2}Q_{0}(w,v)+C_{2}^{\alpha \beta}Q_{\alpha\beta}(w,v),\quad (t,x)\in [0,\infty)\times \R^{2},
\end{aligned}
\end{equation}
where $C_{1},C_{2},C_{1}^{\alpha\beta},C_{2}^{\alpha\beta}\in \R$ and $Q_{0}$, $Q_{\alpha\beta}$ are the standard null forms:
\begin{equation*}
\begin{aligned}
Q_{0}(w,v)=\partial_{\alpha}w\partial^{\alpha}v\quad \mbox{and}\quad Q_{\alpha\beta}(w,v)=\partial_{\alpha}w \partial_{\beta}v-\partial_{\alpha}v\partial_{\beta}w.
\end{aligned}
\end{equation*}
The initial data are prescribed at $t=0$
\begin{equation}\label{eq:ID}
\big( w, \partial_t w, v, \partial_t v \big)_{{|t=0}}
=(w_{0},w_{1},v_{0},v_{1}).
\end{equation}
We use $\Box = \partial_\alpha \partial^\alpha = -\partial_t^{2}+ \partial_{1}^{2}+\partial_{2}^{2}$ to denote the d'Alembert operator. We use Latin letters $i, j, \cdots \in \{1, 2 \}$ to represent space indices, and use Greek letters $\alpha, \beta, \cdots \in \{0, 1, 2 \}$ to represent spacetime indices, and the Einstein summation convention for repeated upper and lower indices is adopted. 

\smallskip
For various kinds of wave-type equations and systems under a suitable restriction of dimension and nonlinearity, the local well-posedness of the Cauchy problem is well-known. Now, one question of basic importance is that whether the Cauchy problem admits a global-in-time solution provided the initial data are sufficiently smooth and small.
In the 3D case, this question was answered in the affirmative in the seminal works of Christodoulou \cite{Christodoulou} and Klainerman \cite{Klainerman86} on nonlinear wave equations with null nonlinearities, and of Klainerman \cite{Klainerman85} and Shatah \cite{Shatah} on nonlinear Klein-Gordon equations with general quadratic nonlinearities. Shortly after these fundamental results, the study on 3D coupled wave--Klein-Gordon systems started in~\cite{Bachelot, Georgiev}. 
On the other hand, in the 2D case, such study is somewhat harder due to the slower decay nature of the wave and the Klein-Gordon components compared to the 3D case (see for instance~\cite{Alinhac01b, OTT} and references therein).
In the present article, we aim to demonstrate the global existence for the 2D coupled wave--Klein-Gordon systems in some important cases, i.e., the semilinear null nonlinearities.

\subsection{Main result}
The goal of the present paper is twofold. The first one is to generalize the classical result \cite{Georgiev} of Georgiev on coupled wave--Klein-Gordon systems from 3D to 2D with more types of quadratic nonlinearities and non-compactly supported initial data, which is more challenging. The other one is that we expect to provide more tools and ideas which can be used to tackle other related physical models in 2D. More precisely, the main result of this article is the following:

\begin{theorem}\label{thm:main}
	Let $N\ge 14$ be an integer. There exists $0<\varepsilon_{0}\ll 1$ such that for all initial data $(w_{0},w_{1},v_{0},v_{1})$ satisfying the smallness condition
	\begin{equation}\label{est:small}
	\begin{aligned}
	&\sum_{k\le N+1}\left(\left\|\langle |x|\rangle^{k}\nabla^{k}w_{0}\right\|_{L^{1}+L^{2}}+\left\|\langle |x|\rangle^{k+1} \log(2+|x|) \nabla^{k}v_{0}\right\|_{L^{2}}\right)\\
	&+\sum_{k\le N}\left(\left\|\langle |x|\rangle^{k+1}\nabla^{k}w_{1}\right\|_{L^{1}+L^{2}}+\left\|\langle |x|\rangle^{k+2} \log(2+|x|) \nabla^{k}v_{1}\right\|_{L^{2}}\right)\le \varepsilon<\varepsilon_0,
	\end{aligned}
	\end{equation}
	the Cauchy problem~\eqref{equ:waveKG}--\eqref{eq:ID} admits a global-in-time solution $(w,v)$, which enjoys the following pointwise decay estimates
	\begin{equation}\label{est:thm}
	|w(t,x)|\lesssim \varepsilon\langle t\rangle^{-\frac{1}{2}},\ \
	|v(t,x)|\lesssim \varepsilon\langle t\rangle^{-1},\ \
	\left|\partial w(t,x)\right|\lesssim \varepsilon\langle t-|x|\rangle^{-\frac{3}{4}}\langle t\rangle^{-\frac{1}{2}}.
	\end{equation}
\end{theorem}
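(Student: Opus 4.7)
The plan is a global-in-time bootstrap argument based on Klainerman's vector field method, adapted carefully to two space dimensions. I set bootstrap assumptions on (a) top-order weighted energies of $(w,v)$ after commutation with admissible vector fields, and (b) the pointwise estimates~\eqref{est:thm}, each with a constant of the form $C_0\varepsilon$ for some $C_0$ large but independent of the lifespan $T$. The local well-posedness and continuity of the solution map then reduce the theorem to improving the constants strictly on $[0,T)$, which upgrades any such $T$ to $+\infty$.

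For the commuting fields I use the admissible set $\{\partial_\alpha,\ \Omega_{ij}=x^i\partial_j-x^j\partial_i,\ L_i=x^i\partial_t+t\partial_i\}$, which commutes with both $\Box$ and $\Box-1$; the scaling $S=t\partial_t+x^k\partial_k$ is reserved for the wave unknown alone because it fails to commute with the Klein-Gordon operator. Since the data are not compactly supported, the whole analysis is carried out on the flat foliation $\{t=\mathrm{const}\}$, with weighted $L^1+L^2$ type norms in the spirit of Georgiev and Alinhac. The energy identity for $\Gamma^I w$ is combined with Alinhac's ghost weight $e^{q(r-t)}$ in order to extract a spacetime control on the good derivatives $\bar\partial_\alpha=\partial_\alpha+\omega_\alpha\partial_t$; for $\Gamma^I v$ I include an additional polynomial weight (reflected already in the log-weights of~\eqref{est:small}) that is strong enough to absorb the borderline $\langle t\rangle^{-1/2}$ loss characteristic of 2D.

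Passing from energy bounds to pointwise bounds uses the 2D Klainerman--Sobolev inequality, which yields $|\Gamma^I w|\lesssim \langle t\rangle^{-1/2}\langle t-r\rangle^{-1/2}$ for the wave part and the Georgiev-type inequality $|\Gamma^I v|\lesssim \langle t\rangle^{-1}$ for the Klein-Gordon part after enough commutations. The divergence-form null form $Q_{\alpha\beta}$ is absorbed using the familiar fact that, after decomposing $\partial_\alpha=\bar\partial_\alpha-\tfrac{1}{2}\omega_\alpha(\partial_t+\partial_r)$, each of its two summands contains at least one good-derivative factor, so the associated spacetime integrals close via the ghost weight. The improved estimate $|\partial w|\lesssim \langle t-r\rangle^{-3/4}\langle t\rangle^{-1/2}$ is obtained by a weighted $L^\infty$ argument using representation formulas along characteristics together with the ghost-weight gain.

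The main obstacle, and where I expect the real work to sit, is the non-divergence null form $Q_0(w,v)=\partial_\alpha w\,\partial^\alpha v$ appearing in the Klein-Gordon equation: it resists both standard integration by parts and the trivial good-derivative decomposition, and it is incompatible with the mass term in the sense that $\partial_t v$ has no enhanced decay. My plan is to exploit the Hörmander-type factorization
\begin{equation*}
Q_0(w,v)=\tfrac{1}{2}\bigl(\Box(wv)-w\Box v-v\Box w\bigr),
\end{equation*}
substitute the equations back into the right-hand side, and perform a quasilinear normal-form change of unknown such as $\tilde v=v+\tfrac{1}{2}C_2\,wv$ together with analogous corrections for $w$, so as to cancel the resonant quadratic contribution. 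The residue then splits into genuine cubic terms and quadratic terms carrying either a good derivative or an explicit factor of $v$ (hence with $\langle t\rangle^{-1}$ decay), both of which fit into the bootstrap in 2D. Verifying that this change of variables is invertible at the working regularity $N\ge 14$, that the corrected energies remain coercive, and that the commutator errors do not reintroduce bad top-order terms is the delicate technical point around which the rest of the proof is organized.
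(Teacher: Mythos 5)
Your overall architecture (bootstrap on weighted energies plus pointwise bounds, commuting with $\{\partial_\alpha,\Omega,H_i\}$ while reserving $S$ for the wave unknown, Alinhac's ghost weight for the Klein--Gordon part, Sobolev inequalities to convert $L^2$ to $L^\infty$) matches the paper. But the way you propose to handle $Q_0$ has a structural gap, and it hides the paper's actual new ingredient.

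You start from the symmetric factorization $Q_0(w,v)=\tfrac12\bigl(\Box(wv)-w\Box v-v\Box w\bigr)$ and then cancel the resonance via $\tilde v=v+\tfrac12 C_2\,wv$ and ``analogous corrections for $w$.'' The Klein--Gordon side of this is plausible, but the wave side fails: the natural correction $\tilde w=w+\tfrac12 C_1\,wv$ gives $-\Box\tilde w=C_1^{\alpha\beta}Q_{\alpha\beta}-\tfrac12 C_1\,wv+\text{(cubic)}$, because $w\Box v$ releases the mass term $wv$ once you substitute $\Box v=v-C_2Q_0-C_2^{\alpha\beta}Q_{\alpha\beta}$. The residue $wv$ is quadratic, carries no derivatives and no null structure, and does not integrate. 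Even in the most favourable splitting, $\|\Gamma^I(wv)\|_{L^2_x}\gtrsim\|\Gamma^{I}v\|_{L^\infty}\|w\|_{L^2_x}\sim\varepsilon^2\langle t\rangle^{-1+\delta}$ at top order (and $\|w\|_{L^\infty}\|\Gamma^Iv\|_{L^2_x}\sim\varepsilon^2\langle t\rangle^{-1/2+\delta}$ in the worst split), so $\int_0^t\|\Gamma^I(wv)\|_{L^2_x}\,\d s$ grows at least like $\langle t\rangle^\delta$, which is incompatible with the boundedness you need for the lower-order energies ($|I|\le N-1$) in the wave bootstrap. Your assertion that ``quadratic terms \ldots carrying an explicit factor of $v$ (hence with $\langle t\rangle^{-1}$ decay)\ldots fit into the bootstrap in 2D'' is exactly what fails here: $\langle t\rangle^{-1}$ pointwise decay on one factor is not enough once you pay the $L^2$ price on the other factor in two space dimensions.

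The paper avoids this by using the asymmetric identity $Q_0(w,v)=\partial^\alpha(v\,\partial_\alpha w)-v\,\Box w$, in which $\Box$ never acts on $v$ alone, so the mass term never leaks out. Since $\Box w$ is already quadratic, $v\,\Box w$ is cubic, and iterating once more (Lemma~\ref{le:equQ0Qab}) pushes the non-divergence remainder to the quartic term $G(w,v)$, with the divergence parts $\partial^\alpha F_\alpha+\partial_\alpha H^\alpha$. The solution is then decomposed as $w=\Upsilon_0+\Upsilon_1+\partial^\alpha\Psi_\alpha+\partial_\alpha\Phi^\alpha$, and the pointwise bound $|\partial w|\lesssim\langle t-r\rangle^{-3/4}\langle t\rangle^{-1/2}$ comes from the extra Hessian decay (Lemma~\ref{lem:Hessian}) applied to $\partial\partial\Psi_\alpha$ and $\partial\partial\Phi^\alpha$, together with a normal-form shift $\widetilde{\Psi}_\alpha=\Psi_\alpha+C_1v\partial_\alpha w$ applied to \emph{this piece of $w$}, not to $v$. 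This is genuinely different from your scheme. Two further points worth noting: for $Q_0$ in the Klein--Gordon equation the paper does no change of unknown at all, but instead uses the null-form estimate $|Q_0(w,v)|\lesssim\langle t+r\rangle^{-1}(|Sw|+|\Gamma w|)|\Gamma v|$ and controls $Sw$ through the conformal energy of the wave equation alone; and for converting energy to pointwise decay the paper relies on Georgiev's global Sobolev inequality~\eqref{est:SobolevGlobal} precisely because the Klainerman--Sobolev inequality you invoke demands a bound on $\|S\Gamma^I v\|_{L^2}$, which is unavailable since $S$ does not commute with $\Box-1$.
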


\begin{remark}
In Theorem~\ref{thm:main}, we can treat non-compactly supported initial data.
Besides, the pointwise asymptotic behavior \eqref{est:thm} of the solution $(w, v)$ to \eqref{equ:waveKG} is optimal in time, due to the reason that even for the 2D linear wave and Klein-Gordon a better decay in time is not expected.
\end{remark}

\begin{remark}
Compared with the previous work \cite{Dong2005}, here we can treat additionally the nonlinear term $Q_0(w, v)$, which is much harder to handle compared with $Q_{\alpha\beta}(w, v)$ due to two reasons. First, the term $Q_0(w, v)$ is not compatible with the Klein-Gordon equations. This means that we need to use the scaling vector field, which is not compatible with the Klein-Gordon equations, to obtain an extra $\langle t\rangle^{-1}$ decay from the null structure of $Q_0(w,v)$.
Second, the term $Q_0(w, v)$ is not of divergence form, which means we cannot express $Q_0(w, v)$ in the form of $\partial (\cdot)$, which prevents us from using some techniques of~\cite{Dong2005}.  
\end{remark}

\begin{remark}
In the remarkable work \cite{Stingo} of Stingo, a 2D quasilinear coupled wave and Klein-Gordon system with null nonlinearities has been shown to admit small global solution. In this work, the initial data do not need to be compactly supported, and the author used the microlocal analysis method to reduce the weights requirement on the initial data to be very low. See also the recent work Ifrim-Stingo~\cite{Ifrim-Stingo} for systems of 2D quasilinear wave--Klein-Gordon system.
\end{remark}

For the 3D coupled wave--Klein-Gordon systems, after the works~\cite{Bachelot, Georgiev}, fruitful progress has been made in~\cite{Ionescu-P-WKG,Kataya,PLF-YM-arXiv1}. 
In the 2D case, we refer to~\cite{Alinhac01b, Alinhac2, Cai,DoLeLe21,FangD, Yin, Li21, OTT} for the works on wave and Klein-Gordon equation, and to~\cite{Dong2005, Duan-Ma, Ma2017b, Stingo} for the study of coupled wave--Klein-Gordon systems.

\smallskip
The study of the coupled wave--Klein-Gordon systems has large physical significance and is mathematically challenging.
In the physical aspect, several fundamental physical models are governed by coupled wave--Klein-Gordon systems, and such models include the Dirac--Klein-Gordon model, the Klein-Gordon--Zakharov model, the Maxwell--Klein-Gordon model, the Einstein--Klein-Gordon model, and so on. To show these models are stable, one is required to study the relevant coupled wave--Klein-Gordon systems. In the mathematical aspect, the most well-known obstacle to study such coupled equations is that the scaling vector field $S = t\partial_t + x^i \partial_i$ is not compatible with the Klein-Gordon operator. Besides, the slow pointwise decay of 2D wave and Klein-Gordon components, compared with the 3D scenario, causes serious problems in the study. Thus new ideas are demanded in such study. Lastly, we also would like to draw one's attention to some of the relevant works in \cite{Bachelot,Dong-Ma,Dong-Wyatt,Ionescu-P-EKG,Klainerman-WY} for the above-mentioned models.

\subsection{Comparison between $Q_{0}$ and $Q_{\alpha\beta}$}
The difficulties in studying coupled wave--Klein-Gordon systems include the non-commutativity of the scaling vector field and the Klein-Gordon operator, the different decay properties of the wave and the Klein-Gordon components etc.,
and we lead one to \cite[Introduction]{Dong2005} for the discussion. Here we only discuss in detail about the extra difficulties caused by the term $Q_0(w, v)$ compared with the term $Q_{\alpha\beta}(w, v)$.

\smallskip
First, the term $Q_0(w, v)$ is not compatible with the Klein-Gordon equations. We recall the estimates (see for instance \cite[Section 3]{Sogge})
$$
|Q_{\alpha\beta} (w, v)|
\lesssim
\langle t\rangle^{-1} \big| \Gamma w \big| \, \big|\Gamma v \big|,
$$
in which $\Gamma \in \{\partial_\alpha, H_i, \Omega \}$ (see the definition in Section \ref{sec:pre}) which are compatible with the Klein-Gordon equations. On the other hand, the estimates on the term $Q_0(w, v)$ demands the scaling vector field $S$, which is not compatible with the Klein-Gordon equations (and that is the reason why we call $Q_0$ non-compatible), i.e.,
$$
|Q_0 (w, v)|
\lesssim
\langle t\rangle^{-1} \big( \big| \Gamma w \big| + \big| S w \big| \big) \big|\Gamma v \big|.
$$
Thus in this sense, it is harder to bound the term $Q_0(w, v)$ by using the vector field method.

\smallskip
Second, we recall that if the nonlinearities in the wave equation take divergence form, then in most cases we can derive better bounds on the wave component. For instance, we consider the following wave equation (we ignore the initial data for simplicity)
$$
-\Box u = \partial_{\alpha} f
$$
where $\alpha=0,1,2$ and $f$ is a sufficiently regular function.
For the solution $u$, we can decompose it into two parts $u = u_{1} + \partial_\alpha u_{2}$, in which $u_{1}, u_{2}$ are solutions to the following wave equations
$$
-\Box u_{1} = 0,
\qquad
-\Box u_{2} = f.
$$
In this case, we can get very good control of $\partial u$ by the relation
$$
\partial u = \partial u_{1} + \partial \partial_\alpha u_{2},
$$
as $u_{1}$ satisfies the linear homogeneous wave equation (see the estimates in \eqref{est:Linwave}), and $\partial \partial_\alpha u_{2}$ is part of the Hessian of $u_{2}$ which roughly speaking enjoys an extra $\langle t-|x|\rangle^{-1}$ decay compared with $\partial u_{2}$ (see the estimates in Lemma  \ref{lem:Hessian}).
This decomposition can be used to treat the term $Q_{\alpha\beta}(w, v)$, as it is of divergence form, i.e.,
$$
Q_{\alpha\beta}(w, v) = \partial_\beta (v \partial_\alpha w) - \partial_\alpha (v \partial_\beta w).
$$
However, since the term $Q_0(w, v)$ is not of divergence form, the above argument cannot be directly applied, thus it requires a new decomposition to handle the term $Q_0(w, v)$.

\subsection{New ingredients}
To prove Theorem \ref{thm:main}, one crucial step is to close the highest-order energy estimates for the solution. One natural choice is to apply the ghost weight energy estimates of Alinhac \cite{Alinhac01b}. We note that it is necessary to first show (see \eqref{est:GhostGu})
$$
|\partial w| \lesssim \langle t \rangle^{-1/2} \langle t-|x|\rangle^{-1/2-\delta}
$$
for some $\delta>0$, so that the ghost weight energy estimates can be applied. Unlike the case of $Q_{\alpha\beta}(w, v)$ in the wave equation which is of divergence form (note the nonlinearities in the wave equation of \cite{Dong2005, Stingo} are also of divergence form), and from which roughly speaking we can gain one more derivative and hence an extra $\langle t-|x|\rangle^{-1}$ decay on the wave component, we need to find a new strategy to treat the case of $Q_0(w, v)$ which is not of divergence form. 

The key to treat the nonlinear term $Q_0(w, v)$ is one simple but useful observation
$$
Q_0(w, v)
= \partial_\alpha w \partial^\alpha v
= \partial^\alpha (v \partial_\alpha w ) - v \partial_\alpha \partial^\alpha w,
$$
and we note that the first term in the right hand side is of divergence form. Then we note that the term $\partial_\alpha \partial^\alpha w = \Box w$ contains only quadratic terms which makes $v \partial_\alpha \partial^\alpha w$ into cubic terms. Furthermore, this way of decomposing the nonlinear term $Q_0(w, v)$ can be carried on so that we can express $Q_0(w, v)$ as the summation of quartic terms and terms of divergence form; see Lemma \ref{le:equQ0Qab} for more details.

Compared with the proof in \cite{Dong2005}, we do not need to rely on an iteration procedure, and the key is to apply the Sobolev inequality \eqref{est:SobolevGlobal} proved in \cite{Geor} by Georgiev. This makes the presentation of the proof in the present paper neater.

\subsection{Organisation}
The article is organized as follows.
In Section \ref{sec:pre}, we introduce the notation and some fundamental estimates and tools to be used in Section~\ref{sec:proof}: the estimates on commutators and null forms, global Sobolev inequality, and $L^{\infty}$ estimates for wave equations and Klein-Gordon equations. 
In Section \ref{sec:proof}, we provide the proof for Theorem \ref{thm:main} by Klainerman's vector field method enhanced with Alinhac's ghost weight method.

\subsection*{Acknowledgement}
The author S.D. was partially supported by the China Postdoctoral Science Foundation, with grant number 2021M690702.

\section{Preliminaries}\label{sec:pre}

\subsection{Notation}

We work in the (1+2) dimensional spacetime $\R^{1+2}$ with Minkowski metric $\eta= (-1,1,1)$ which is used to raise or lower indices.
For a point $(x_{0},x_{1},x_{2})=(t,x_{1},x_{2})\in \R^{1+2}$, we denote its spacial radius by $r=\sqrt{x_{1}^{2}+x_{2}^{2}}$. The space indices are denoted by Latin letters $i,j\in \{1,2\}$. The spacetime indices are denoted by Greek letters $\alpha,\beta\in \{0,1,2\}$. 

To state global Sobolev inequalities, we first introduce the following four groups of vector fields:
\begin{enumerate}
	\item [(i)] Translations: $\partial_{\alpha}=\partial_{x_{\alpha}}$, for $\alpha=0,1,2$.
	
	\item [(ii)] Spatial rotations: $\Omega =x_{1}\partial_{2}-x_{2}\partial_{1}$.
	
	\item [(iii)] Scaling vector field: $S=t\partial_{t}+x^{i}\partial_{i}$.
	
	\item [(iv)] Hyperbolic rotations: $H_{i}=t\partial_{i}+x_{i}\partial_{t}$, for $i=1,2$.
\end{enumerate}

Excluding the scaling vector field $S$, we consider a general vector field set 
\begin{equation*}
V=\left\{\Omega;\partial_{\alpha},\alpha=0,1,2;H_{i},i=1,2\right\}.
\end{equation*}
For future notational convenience, we order these vector fields that belong to $V$ in some arbitrary manner, and we label them as $\Gamma_{1},\Gamma_{2},\dots,\Gamma_{6}$. Moreover, for any multi-index $I=(I_{1},I_{2},\dots,I_{6})\in \mathbb{N}^{6}$, we denote
\begin{equation*}
\Gamma^{I}=\prod_{k=1}^{6}\Gamma_{k}^{I_{k}},\quad \mbox{where}\ \Gamma=\left(\Gamma_{1},\Gamma_{2},\dots,\Gamma_{6}\right).
\end{equation*}

In addition, we also introduce the good derivatives
\begin{equation*}
G_{i}=\frac{1}{r}\left(x_{i}\partial_{t}+r\partial_{i}\right)\quad \mbox{for}\ i=1,2.
\end{equation*}

The Fourier transform is defined as 
\begin{equation*}
\hat{h}(\xi)=\frac{1}{2\pi}\int_{\R^{2}}h(x)e^{-ix\cdot\xi}\d x,\quad \mbox{for}\ h\in L^{2}_{x}.
\end{equation*}

For $(x,\rho)\in \R^{2}\times \R_{+}$, we denote by $B(x,\rho)$ (respectively, $\partial B(x,\rho)$) the ball (respectively, the sphere) of $\R^{2}$ of center $x$ and of radius $\rho$.

Let $\left\{\psi_{j}\right\}_{j=0}^{\infty}$ be a Littlewood-Paley partition of unity, i.e.
\begin{equation*}
1=\sum_{j=0}^{\infty}\psi_{j}(s),\ s\ge 0,\ \psi_{j}\in C_{0}^{\infty}\left(\R^{2}\right),\quad \psi_{j}\ge 0\quad \mbox{for all}\ j\ge 0,
\end{equation*}
as well as 
\begin{equation*}
\mbox{supp}\psi_{0}\cap [0,\infty)=[0,2],\quad 
\mbox{supp}\psi_{j}\subset \left[2^{j-1},2^{j+1}\right]\quad \mbox{for all}\ j\ge 1.
\end{equation*}

We will use $Q$ to denote a general null form in $\left\{Q_{0};\ Q_{\alpha\beta}, 0\le \alpha\ne \beta\le 2\right\}$.

For simplicity of notation, we denote the initial data $(w_{0},w_{1},v_{0},v_{1})$ by 
\begin{equation*}
\vec{w}_{0}=(w_{0},w_{1}),\
\vec{v}_{0}=(v_{0},v_{1}),\
\left(\vec{w}_{0},\vec{v}_{0}\right)=(w_{0},w_{1},v_{0},v_{1}).
\end{equation*}

 We write $A \lesssim B$ to indicate $A \leq C B$ with $C$ a universal constant, and we use the notation $\langle s \rangle = \sqrt{1+|s|^2}$ for $s\in \R$.

\subsection{Estimates on commutators and null forms}
In this subsection, we state some preliminary estimates related to commutators and null forms $Q$.
We first recall the well-known relations
\begin{equation*}
\left[\Box,\Gamma_{k}\right]=\left[ (\Box-1),\Gamma_{k}\right]=0,\quad \mbox{for}\
k=1,2,\dots,6.
\end{equation*}

Second, we introduce the following estimates related to the vector fields.
\begin{lemma} 
	For any smooth function $m=m(t,x)$, the following estimates hold.
	\begin{enumerate}
		\item \emph{Estimate on commutators}. For all $I\in \mathbb{N}^{6}$, we have 
			\begin{equation}\label{est:parGamma}
			\begin{aligned}
		\sum_{\alpha=0}^{2}\left|\left[\partial_{\alpha},\Gamma^{I}\right]m\right|
		+\left|\left[S,\Gamma^{I}\right]m\right|&\lesssim \sum_{|J|<|I|}\sum_{\beta=0}^{2}\left|\partial_{\beta}\Gamma^{J}m\right|.
		\end{aligned}
		\end{equation}
		
		\item \emph{Estimates on $\partial m$ and $G_{i}m$.} We have 
		\begin{equation}\label{est:decaypm}
		\langle t-r\rangle\left|\partial m\right|+\langle t+r\rangle\left|G_{i}m\right|\lesssim \sum_{|I|=1}\left(\left|Sm\right|+\left|\Gamma^{I}m\right|\right).
		\end{equation}
	\end{enumerate}
\end{lemma}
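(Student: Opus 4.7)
The plan for (i) is an induction on $|I|$ after reducing to the base case $|I|=1$. The base case is a direct computation which shows that for every $\Gamma_k \in V\cup\{S\}$ and every $\alpha$, both $[\partial_\alpha,\Gamma_k]$ and $[S,\Gamma_k]$ are $\R$-linear combinations of the translations $\{\partial_0,\partial_1,\partial_2\}$ with bounded constant coefficients (for instance $[\partial_\alpha,S]=\partial_\alpha$, $[S,\Omega]=[S,H_i]=0$, $[\partial_\alpha,H_i]\in\mathrm{span}\{\partial_0,\partial_i\}$, $[\partial_\alpha,\Omega]\in\mathrm{span}\{\partial_1,\partial_2\}$). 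For the inductive step, I factor $\Gamma^I=\Gamma_j\Gamma^{I'}$ with $|I'|=|I|-1$ and expand
\begin{equation*}
[\partial_\alpha,\Gamma^I]=[\partial_\alpha,\Gamma_j]\Gamma^{I'}+\Gamma_j[\partial_\alpha,\Gamma^{I'}].
\end{equation*}
The first summand is of the required form by the base case. For the second, I apply the inductive hypothesis to rewrite $[\partial_\alpha,\Gamma^{I'}]m$ as a sum of $\partial_\beta\Gamma^J m$ with $|J|<|I'|$, then commute $\Gamma_j$ past each $\partial_\beta$ at the cost of one more base-case commutator; the result lies in $\sum_{|J|<|I|}\sum_\beta|\partial_\beta\Gamma^J m|$. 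The treatment of $[S,\Gamma^I]$ is entirely analogous.

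For (ii), the plan is to derive explicit algebraic identities writing $\partial_\alpha$ and $G_i$ as linear combinations of $S$ and elements of $V$ with coefficients that are either bounded or of size $(t+r)^{-1}$. Direct manipulation yields
\begin{equation*}
(t^2-r^2)\partial_t=tS-x^iH_i,\qquad (t^2-r^2)\partial_i=tH_i-x_iS\pm x_{3-i}\Omega,
\end{equation*}
where the last identity uses the 2D formula $r\partial_i-x_i\partial_r=\mp\tfrac{x_{3-i}}{r}\Omega$ to extract the rotational piece of $x_ix^j\partial_j$. A parallel computation gives
\begin{equation*}
(t+r)G_i=\tfrac{x_i}{r}S+H_i\mp\tfrac{x_{3-i}}{r}\Omega.
\end{equation*}
Dividing the first two identities by $t+r$ and using $|t|,|x_i|\le t+r$ produces the bound on $\langle t-r\rangle|\partial m|$ in the region $\{t+r\ge 1\}$, and the third identity immediately gives the bound on $(t+r)|G_im|$ there. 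In the complementary region $\{t+r\le 1\}$ the weight $\langle t-r\rangle$ is $\lesssim 1$ and $|G_im|\le|\partial m|$ because $|x_i/r|\le 1$, so both quantities are trivially dominated by $\sum_{|I|=1}|\Gamma^I m|$.

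The only mildly delicate step is the derivation of the identity for $(t^2-r^2)\partial_i$: one has to arrange the cancellation between the $t^2\partial_i$ contribution coming from $tH_i$ and the radial piece $x_ix^j\partial_j$ coming from $x_iS$, and then absorb the residual tangential derivative into $\Omega$. This is the only step that genuinely uses the two-dimensional nature of the problem, through the relation $x_1\partial_2-x_2\partial_1=\Omega$; everything else is routine bookkeeping.
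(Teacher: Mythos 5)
Your proof is correct and follows essentially the same route as the paper: the base-case-plus-induction argument for (i) is identical, and for (ii) you derive the same identities expressing $(t^2-r^2)\partial_\alpha$ in terms of $S$, $H_j$, and $\Omega$. The only cosmetic difference is in the treatment of $G_i$: the paper records the intermediate identities $G_im=\tfrac{1}{r}\left(H_im+(r-t)\partial_im\right)=\tfrac{1}{t}\left(H_im-\tfrac{x_i}{r}(r-t)\partial_tm\right)$ and reuses the already-established bound on $\langle t-r\rangle|\partial m|$, whereas you derive the single finished identity $(t+r)G_i=\tfrac{x_i}{r}S+H_i\mp\tfrac{x_{3-i}}{r}\Omega$ directly.
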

\begin{proof}
	Proof of (i). Note that, for $k=1,2,\dots,6$ and $\alpha=0,1,2$, we have 
	\begin{equation*}
	\begin{aligned}
\left[\partial_{\alpha}, \Gamma_{k}\right]m,\  \left[S,\Gamma_{k}\right]m&\in \mbox{Span}\left\{\partial_{t}m,\partial_{1}m,\partial_{2}m\right\},
	\end{aligned}
	\end{equation*}
	which implies~\eqref{est:parGamma} for $I\in \mathbb{N}^{6}$ with $|I|=1$. Then by an induction argument, we obtain~\eqref{est:parGamma} for all $I\in \mathbb{N}^{6}$.
	
	Proof of (ii). By an elementary computation, for $i=1,2$,
	\begin{equation*}
	\begin{aligned}
	\partial_{t}m&=(t^{2}-r^{2})^{-1}\left(tSm-x^{j}H_{j}m\right),\\
	\partial_{i}m&=(t^{2}-r^{2})^{-1}\left(tH_{i}m-x_{i}Sm-(-1)^{i}x_{3-i}\Omega m\right),\\
	G_{i}m&=\frac{1}{r}\left(H_{i}m+(r-t)\partial_{i}m\right)=\frac{1}{t}\left(H_{i}m-\frac{x_{i}}{r}(r-t)\partial_{t}m\right).
	\end{aligned}
	\end{equation*}
	Based on the above identities, we obtain~\eqref{est:decaypm}.
\end{proof}

Third, we recall the following estimates related to the null form $Q$ from~\cite{Sogge}.
\begin{lemma}[\cite{Sogge}]
	For any $I\in \mathbb{N}^{6}$, and smooth functions $m$ and $n$, we have  
	\begin{align}
	\left|Q(m,n)\right|&\lesssim 
	\sum_{i=1,2}\left(\left|G_{i}m\right|\left|\partial n\right|+\left|G_{i}n\right|\left|\partial m\right|\right),\label{est:Q0decayG}\\
\left|Q(m,n)\right|&\lesssim 	\langle t+r\rangle^{-1}
	\sum_{|I|=1}\bigg(|Sm|+\sum_{|J|=1}\left|\Gamma^{J}m\right|\bigg)|\Gamma^{I}n|,\label{est:Q0decay}\\
		\left|\Gamma^{I}Q(m,n)\right|&\lesssim \sum_{\substack{ \alpha\ne \beta \\ |I_{1}|+|I_{2}|\le |I| }}\left(\left|Q_{0}\left(\Gamma^{I_{1}}m,\Gamma^{I_{2}}n\right)\right|+\left|Q_{\alpha\beta}\left(\Gamma^{I_{1}}m,\Gamma^{I_{2}}n\right)\right|\right)\label{equ:GammaQ0}.
	\end{align}
\end{lemma}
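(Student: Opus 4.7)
The plan is to prove the three estimates by purely algebraic manipulations, exploiting the special structure of the null forms together with the decomposition of spatial derivatives in terms of the good derivatives $G_i$ and the commutation properties of the fields in $V$ with $\partial_\alpha$.

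First, for \eqref{est:Q0decayG}, I would substitute the identity $\partial_i = G_i - \frac{x_i}{r}\partial_t$ into the definition of each null form. For $Q_0(m,n) = -\partial_t m\,\partial_t n + \sum_{i}\partial_i m\,\partial_i n$, this substitution produces four families of terms, and the crucial identity $\sum_i x_i^2/r^2 = 1$ forces the bad product $\partial_t m\,\partial_t n$ to cancel, leaving only summands each containing at least one factor $G_i m$ or $G_i n$. For the mixed forms $Q_{0j}(m,n) = \partial_t m\,\partial_j n - \partial_t n\,\partial_j m$ and $Q_{12}(m,n)$, the same substitution combined with the antisymmetric pairing in the definition of $Q_{\alpha\beta}$ again causes the purely $\partial_t$ products to cancel, which gives the bound.

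The second estimate \eqref{est:Q0decay} should then follow by directly inserting the pointwise bound on $|G_i m|$ from \eqref{est:decaypm}, namely $\langle t+r\rangle |G_i m| \lesssim |Sm| + \sum_{|J|=1}|\Gamma^J m|$, into the right hand side of \eqref{est:Q0decayG}. The remaining factors $|\partial n|$ are controlled by $\sum_{|I|=1}|\Gamma^I n|$, since $\partial_\alpha \in V$ for $\alpha=0,1,2$.

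The third estimate \eqref{equ:GammaQ0} is established by induction on $|I|$. The base step $|I|=1$ amounts to showing that for each $\Gamma_k \in V$ the expression $\Gamma_k Q(m,n)$ is a finite sum of null forms of type $Q_0$ or $Q_{\alpha\beta}$ evaluated on pairs $(\Gamma_k m, n)$, $(m, \Gamma_k n)$, or $(m,n)$. For translations $\partial_\alpha$ this is just the Leibniz rule. For the Lorentz boosts $H_i$ and the rotation $\Omega$, I would compute the commutators $[H_i,\partial_\alpha]$ and $[\Omega,\partial_\alpha]$ explicitly (each is a linear combination of $\partial_\beta$'s) and track how they interact with the $\eta^{\alpha\beta}$ contraction in $Q_0$ and with the antisymmetric structure of $Q_{\alpha\beta}$; the symmetry of $\eta$ against the antisymmetry of $[H_i,\partial_\cdot]$ forces the extra terms to reassemble into another null form rather than an arbitrary quadratic expression. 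I expect this bookkeeping to be the main obstacle — that is, checking that the specific mix of $Q_0$ and $Q_{\alpha\beta}$ produced by each $\Gamma_k$ indeed fits into the right hand side of \eqref{equ:GammaQ0}, which is precisely the statement that $V$ preserves the class of null forms. Once the base step is in hand, the inductive step is immediate by applying it with $m$ and $n$ replaced by $\Gamma^{J_1}m$ and $\Gamma^{J_2}n$ and using \eqref{est:parGamma} to absorb any resulting commutator error into lower-order terms.
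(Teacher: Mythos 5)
Your arguments for \eqref{est:Q0decayG} and \eqref{equ:GammaQ0} are correct and essentially standard: for \eqref{est:Q0decayG} the substitution $\partial_i = G_i - \frac{x_i}{r}\partial_t$ plus the cancellation of the pure $\partial_t m\,\partial_t n$ product (via $\sum_i x_i^2/r^2 = 1$ for $Q_0$, and via antisymmetry for $Q_{\alpha\beta}$) is exactly what the paper has in mind, and for \eqref{equ:GammaQ0} induction on $|I|$ together with the fact that the vector fields in $V$ map the family of null forms into itself is the usual route (this is what Sogge does).

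However, your proof of \eqref{est:Q0decay} has a genuine gap. The right-hand side of \eqref{est:Q0decayG} is a sum of \emph{two} kinds of terms, $|G_i m|\,|\partial n|$ and $|G_i n|\,|\partial m|$, and you only address the first. Observe that the target inequality \eqref{est:Q0decay} is deliberately asymmetric: $S$ is allowed to land on $m$ but \emph{not} on $n$ (this is essential later, since $S$ is not compatible with the Klein--Gordon equation satisfied by $v$, which plays the role of $n$). If you try to treat the second family of terms by ``directly inserting'' the bound $\langle t+r\rangle|G_i n|\lesssim |Sn|+\sum_{|J|=1}|\Gamma^J n|$, you produce a forbidden factor $|Sn|$. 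To repair the argument you must instead use the sharper intermediate identity from the proof of \eqref{est:decaypm}, namely
\begin{equation*}
\langle t+r\rangle\,|G_i n|\;\lesssim\; |H_i n| + \langle t-r\rangle\,|\partial n|,
\end{equation*}
which contains no $Sn$. Then
\begin{equation*}
\langle t+r\rangle\,|G_i n|\,|\partial m|
\;\lesssim\; |H_i n|\,|\partial m| + \bigl(\langle t-r\rangle\,|\partial m|\bigr)\,|\partial n|
\;\lesssim\; \Bigl(|Sm| + \sum_{|J|=1}|\Gamma^J m|\Bigr)\sum_{|I|=1}|\Gamma^I n|,
\end{equation*}
where in the last step $\langle t-r\rangle|\partial m|\lesssim |Sm|+\sum_{|J|=1}|\Gamma^J m|$ comes from \eqref{est:decaypm} applied to $m$ (so $S$ lands on $m$, as required), and $|H_i n|, |\partial n|\lesssim\sum_{|I|=1}|\Gamma^I n|$ since $H_i,\partial_\alpha\in V$. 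Without this detour through the boost identity, the decay factor $\langle t+r\rangle^{-1}$ in \eqref{est:Q0decay} cannot be extracted from the $|G_i n|\,|\partial m|$ term while keeping $S$ off of $n$.
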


\begin{proof}
	The proof of~\eqref{est:Q0decayG} is based on the identity $\partial_{i}=G_{i}-\frac{x_{i}}{r}\partial_{t}$ and the structure of null form $Q$, and we omit it.
	For~\eqref{est:Q0decay} and~\eqref{equ:GammaQ0}, we refer to~\cite[Lemma 3.3]{Sogge} and~\cite[Page 58]{Sogge} for the complete proofs.
	\end{proof}

Note that, from~\eqref{est:Q0decay} and~\eqref{equ:GammaQ0}, for all $I\in \mathbb{N}^{6}$, we have the following pointwise estimate for $\Gamma^{I}Q$ where $Q\in \left\{Q_{0};\ Q_{\alpha\beta},\ 0\le \alpha\ne \beta\le 2\right\}$,
\begin{equation}\label{est:txQ0}
\left|\langle t+r\rangle\Gamma^{I}Q(w,v)\right|\lesssim \sum_{\substack{|I_{1}|+|I_{2}|\le |I|\\ |J_{1}|=|J_{2}|=1}}\bigg(\left|S\Gamma^{I_{1}}w\right|+\left|\Gamma^{J_{1}}\Gamma^{I_{1}}w\right|\bigg)\left|\Gamma^{J_{2}}\Gamma^{I_{2}}v\right|.
\end{equation}

Last, we expand the nonlinear term $C_{1}Q_{0}+C_{1}^{\alpha\beta}Q_{\alpha\beta}$ by an elementary computation.
\begin{lemma}\label{le:equQ0Qab}
 Let $(m,n)$ be a solution of~\eqref{equ:waveKG}, then we have 
 \begin{equation}\label{equ:Q0Qab}
C_{1}Q_{0}(m,n)+C^{\alpha\beta}_{1}Q_{\alpha\beta}(m,n)=\partial^{\alpha}F_{\alpha}(m,n)+\partial_{\alpha}H^{\alpha}(m,n)+G(m,n),
 \end{equation}
 where
 \begin{equation*}
 \begin{aligned}
 G(m,n)&=\frac{C^{2}_{1}}{2}n^{2}\left(C_{1}Q_{0}(m,n)+C_{1}^{\alpha\beta}Q_{\alpha\beta}(m,n)\right),\\
 F_{\alpha}(m,n)&=\sum_{k=1,2}\frac{C^{k}_{1}}{k!}\left(n^{k}\partial_{\alpha} m\right),\ H^{\alpha}(m,n)=\sum_{k=1,2}\frac{C_{1}^{k-1}}{k!}\left(C_{1}^{\beta\alpha}-C_{1}^{\alpha\beta}\right)(n^{k}\partial_{\beta}m).
 \end{aligned}
 \end{equation*}
\end{lemma}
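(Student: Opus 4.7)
The plan is to prove the identity by iterating the divergence-form decomposition of $Q_0$ that was highlighted in the introduction. Set $N(m,n):= C_1Q_0(m,n)+C_1^{\alpha\beta}Q_{\alpha\beta}(m,n)$, so that the wave equation in \eqref{equ:waveKG} reads $-\Box m = N(m,n)$. The goal is to prove
$$
N(m,n)=\partial^{\alpha}F_{\alpha}(m,n)+\partial_{\alpha}H^{\alpha}(m,n)+G(m,n)
$$
by applying the Leibniz identity twice and using $-\Box m = N(m,n)$ to recycle the ``bad'' factor of $\Box m$ into progressively higher-order terms.

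First, I would write out the $k=1$ contribution. For $Q_0$, the elementary identity
$$
Q_0(m,n)=\partial_\alpha m\,\partial^\alpha n=\partial^\alpha(n\,\partial_\alpha m)-n\,\Box m
$$
produces the divergence piece $C_1\partial^\alpha(n\partial_\alpha m)=\partial^\alpha F_{\alpha}|_{k=1}$ together with $-C_1 n\,\Box m$. For $Q_{\alpha\beta}$, the analogous computation yields two divergence terms and two Hessian terms, but the latter cancel by the antisymmetry $\partial_\alpha\partial_\beta m=\partial_\beta\partial_\alpha m$, giving $Q_{\alpha\beta}(m,n)=\partial_\beta(n\partial_\alpha m)-\partial_\alpha(n\partial_\beta m)$; after relabeling dummy indices, one obtains $C_1^{\alpha\beta}Q_{\alpha\beta}(m,n)=\partial_\alpha H^{\alpha}(m,n)|_{k=1}$. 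Substituting $-\Box m = N(m,n)$ in the residue then gives
$$
N(m,n)=\partial^\alpha F_\alpha|_{k=1}+\partial_\alpha H^\alpha|_{k=1}+C_1 n\,N(m,n).
$$

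The crucial observation for the second step is the algebraic identity $n\,Q(m,n)=Q(m,\tfrac12 n^2)$ for $Q\in\{Q_0,Q_{\alpha\beta}\}$: indeed $n\partial^\alpha n=\partial^\alpha(\tfrac12 n^2)$, and by bilinearity of the null forms the factor $n$ slides into the second argument. This gives $C_1 n\,N(m,n)=C_1 N(m,\tfrac12 n^2)$, to which I apply the same decomposition as in the first step, with $n$ replaced by $\tfrac12 n^2$. The divergence pieces produce exactly the $k=2$ terms of $F_\alpha$ and $H^\alpha$ (matching the combinatorial factors $\frac{C_1^k}{k!}$ and $\frac{C_1^{k-1}}{k!}$), while the remainder is $-C_1\cdot\tfrac{n^2}{2}\,\Box m$. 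Using $-\Box m = N(m,n)$ once more converts this into $\frac{C_1^2 n^2}{2}N(m,n) = G(m,n)$. Assembling the two steps yields the claimed identity.

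The computation is entirely algebraic, with no hard estimate involved; the only care required is the bookkeeping of indices and the verification that the antisymmetry of $C_1^{\alpha\beta}Q_{\alpha\beta}$ really does annihilate the $\Box m$ term at each iteration, so that the combinatorial coefficients $\frac{C_1^{k-1}}{k!}(C_1^{\beta\alpha}-C_1^{\alpha\beta})$ in $H^\alpha$ come out correctly. The main conceptual point — and the reason the lemma is nontrivial — is the identification $n\,Q(m,n)=Q(m,\tfrac12 n^2)$, which is exactly what allows the iteration to close after two steps rather than producing an infinite series.
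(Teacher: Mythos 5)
Your proof is correct and follows essentially the same two-step iteration as the paper: write $Q_0(m,n)=\partial^\alpha(n\partial_\alpha m)-n\Box m$, put $C_1^{\alpha\beta}Q_{\alpha\beta}$ in divergence form via antisymmetry, substitute $-\Box m = N(m,n)$ from the wave equation, and iterate once more to produce the quartic remainder $G$. The paper performs the second iteration by directly computing $C_1^2 n Q_0(m,n)=\tfrac{C_1^2}{2}\partial^\alpha(n^2\partial_\alpha m)+G(m,n)$ and the analogous identity for $C_1C_1^{\alpha\beta}nQ_{\alpha\beta}$, while you package the same algebra through the observation $n\,Q(m,n)=Q(m,\tfrac12 n^2)$; this is a cleaner way to see why the combinatorial coefficients $\tfrac{C_1^k}{k!}$ arise, but the underlying computation is identical.
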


\begin{proof}
	First, we observe that 
	\begin{equation*}
	C_{1}^{\alpha\beta}Q_{\alpha\beta}(m,n)=\left(C_{1}^{\beta\alpha}-C_{1}^{\alpha\beta}\right)\partial_{\alpha}(n\partial_{\beta}m).
	\end{equation*}
	Then, from~\eqref{equ:waveKG}, we have
	\begin{equation*}
	\begin{aligned}
	C_{1}Q_{0}(m,n)
	=C_{1}\partial^{\alpha}\left(n\partial_{\alpha}m\right)+C_{1}^{2}nQ_{0}(m,n)+C_{1}C_{1}^{\alpha\beta}nQ_{\alpha\beta}(m,n).
	\end{aligned}
	\end{equation*}
	By an elementary computation,
	\begin{equation*}
	\begin{aligned}
	C_{1}^{2}nQ_{0}(m,n)&=\frac{C_{1}^{2}}{2}\partial^{\alpha}\left(n^{2}\partial_{\alpha}m\right)+G(m,n),\\ C_{1}C_{1}^{\alpha\beta}nQ_{\alpha\beta}(m,n)&=\frac{C_{1}}{2}\left(C_{1}^{\beta\alpha}-C_{1}^{\alpha\beta}\right)\partial_{\alpha}(n^{2}\partial_{\beta}m).
	\end{aligned}
		\end{equation*}
		Based on the above identities, we obtain~\eqref{equ:Q0Qab}.
	\end{proof}
\begin{remark}
Note that, the terms $\partial^{\alpha}F_{\alpha}$ and $\partial_{\alpha}H^{\alpha}$ take the divergence form, and $G$ is a quartic term which decays fast enough as $t\to\infty$. This way of expressing the nonlinear terms can be used to establish very good $L^{2}$-norm estimate of wave component $w$, and hence a good estimate of the $L^{2}$-norm of $Sw$ by the aid of the conformal energy estimate (see more details in Lemma~\ref{le:enerE} and~\S\ref{Se:mainpro}).
\end{remark}

\subsection{Global Sobolev inequality}
In this subsection, we recall some Sobolev type inequalities associated with the vector field set $V$. These inequalities can be used to obtain the pointwise decay estimates of wave and Klein-Gordon equation from the weighted energy bounds.

\begin{lemma}[\cite{Geor,KlainWave}]
	Let $u=u(t,x)$ be a sufficiently regular function. Then the following estimates hold.
	\begin{enumerate}
		\item \emph{Standard Sobolev inequality}. We have 
		\begin{equation}\label{est:SobolevStand}
		\left|u(t,x)\right|\lesssim \langle r \rangle^{-\frac{1}{2}}\sum_{|I|\le 2}\left\|\Gamma^{I}u(t,x)\right\|_{L^{2}_{x}}.
		\end{equation}
		
		\item \emph{Estimate inside of a cone.} For $|x|\le \frac{t}{2}$, we have 
		\begin{equation}\label{est:Sobolevinsi}
		\left|u(t,x)\right|\lesssim \langle t\rangle^{-\frac{1}{2}}\sum_{|I|\le 3}\left\|\Gamma^{I}u(t,x)\right\|_{L^{2}_{x}}.
		\end{equation}
		
		\item \emph{Global Sobolev inequality.} We have 
		\begin{equation}\label{est:SobolevGlobal}
		\left|u(t,x)\right|\lesssim \langle t\rangle^{-\frac{1}{2}}\sum_{|I|\le 3}\left\|\Gamma^{I}u(t,x)\right\|_{L_{x}^{2}}.
		\end{equation}
	\end{enumerate}
\end{lemma}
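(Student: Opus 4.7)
The plan is to establish the three estimates in sequence: \eqref{est:SobolevStand} as the base, \eqref{est:Sobolevinsi} as a rescaling/hyperboloid reduction of it that exploits only the vector fields in $V$, and finally \eqref{est:SobolevGlobal} as their combination via the case split $|x|\le t/2$ versus $|x|\ge t/2$.

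For \eqref{est:SobolevStand} I would fix $t$ and reduce to $r=|x|\ge 1$, since for $r\le 1$ the classical embedding $H^{2}(\R^{2})\hookrightarrow L^{\infty}$ already gives the bound. I would then work in polar coordinates $(\rho,\theta)$ and compare the value of $u$ at radius $r$ with its values on the dyadic shell $\rho\in[r/2,2r]$. Using a smooth cutoff $\chi$ supported in that shell together with the one-dimensional fundamental theorem of calculus in $\rho$, and Sobolev embedding on the circle $S^{1}$ (on which $\Omega$ is precisely the angular derivative $\partial_{\theta}$), one arrives at
\[
|u(t,x)|^{2}\lesssim r^{-1}\sum_{|I|\le 2}\int_{r/2}^{2r}\!\!\int_{S^{1}}|\Gamma^{I}u(t,\rho,\theta)|^{2}\,\rho\,d\rho\,d\theta\lesssim r^{-1}\sum_{|I|\le 2}\|\Gamma^{I}u(t,\cdot)\|_{L^{2}_{x}}^{2},
\]
the $r^{-1}$ weight coming from the Jacobian $\rho\,d\rho\,d\theta$ with $\rho\sim r$ on the support, and the radial derivative $\partial_{\rho}$ being a bounded combination of $\partial_{1},\partial_{2}$.

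For \eqref{est:Sobolevinsi}, the central idea (following Georgiev \cite{Geor}) is to parametrize, through the given point $(t,x)$ with $|x|\le t/2$, the hyperboloid $\{(\tau,y):\tau^{2}-|y|^{2}=s^{2}\}$ with $s=\sqrt{t^{2}-|x|^{2}}\sim t$. On this hyperboloid the identity
\[
\partial_{y_{i}}\bigl[u\bigl(\sqrt{s^{2}+|y|^{2}},\,y\bigr)\bigr]=\frac{(H_{i}u)\bigl(\sqrt{s^{2}+|y|^{2}},\,y\bigr)}{\sqrt{s^{2}+|y|^{2}}}
\]
turns flat translations in $y$ into $H_{i}/\tau$ while keeping the angular derivative as $\Omega$. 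Restricting $u$ to the hyperboloid, applying \eqref{est:SobolevStand} in the variable $y$, and comparing the resulting hyperboloidal $L^{2}$ norm with the flat $L^{2}_{x}$ norm at time $t$ — which is a bounded operation because on the relevant range of $y$ the hyperboloid stays in a time band of width $\sim t$ around $t$ and the relevant Jacobian is uniformly comparable to $1$ — converts the $r^{-1/2}$ weight into $\langle t\rangle^{-1/2}$. Finally \eqref{est:SobolevGlobal} follows by splitting $\R^{2}=\{|x|\ge t/2\}\cup\{|x|\le t/2\}$: on the outer piece $\langle r\rangle\sim\langle t\rangle$ and \eqref{est:SobolevStand} yields the bound, while on the inner piece \eqref{est:Sobolevinsi} does.

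The delicate step is \eqref{est:Sobolevinsi}: because $S\notin V$, the classical Klainerman-Sobolev chain that extracts a $\langle t-|x|\rangle^{-1/2}$ factor from $S$ is unavailable, so one must carry out the hyperboloid reduction (or an equivalent careful rescaling argument relying solely on $\partial_{\alpha},\Omega,H_{i}$) to trade the $\langle r\rangle^{-1/2}$ weight of (i) for $\langle t\rangle^{-1/2}$ uniformly in the deep interior of the cone.
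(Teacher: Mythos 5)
The paper does not prove parts \emph{(i)} and \emph{(ii)} at all; it cites them to \cite{KlainWave} and \cite{Geor} respectively and only observes that \emph{(iii)} follows from \emph{(i)} and \emph{(ii)} by splitting $\R^{2}$ at $|x|=t/2$. Your proof of \emph{(iii)} from \emph{(i)} and \emph{(ii)} is exactly the paper's argument, and your dyadic polar-coordinate argument for \emph{(i)} (Sobolev on $S^{1}$ via $\Omega=\partial_{\theta}$, FTC in $\rho$, Jacobian $\rho\sim r$ producing the $r^{-1/2}$) is the standard proof of Klainerman's Proposition~1 and is correct.

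The gap is in your sketch of \emph{(ii)}. The hyperboloidal reduction is the right idea and the identity $\partial_{y_{i}}\bigl[u(\sqrt{s^{2}+|y|^{2}},y)\bigr]=\tau^{-1}(H_{i}u)$ is correct, but the step ``comparing the resulting hyperboloidal $L^{2}$ norm with the flat $L^{2}_{x}$ norm at time $t$ \ldots is a bounded operation because \ldots the relevant Jacobian is uniformly comparable to $1$'' is not a valid argument. The hyperboloid and the time slice $\{\tau=t\}$ are different two-dimensional surfaces in spacetime; there is no change of variables, and hence no Jacobian, taking one to the other. To relate $\bar{u}(y)=u(\sqrt{s^{2}+|y|^{2}},y)$ to $u(t,y)$ one must integrate $\partial_{t}u$ over the time interval between $\tau=\sqrt{s^{2}+|y|^{2}}$ and $\tau=t$; on the annulus $|y|\sim|x|$ with $|x|$ comparable to $t/2$ this interval has length $\sim t$, so the naive propagation picks up a factor of $t$ that destroys the estimate. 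Saying the hyperboloid ``stays in a time band of width $\sim t$'' makes the comparison harder, not easier. Note also that \emph{(ii)} is stated with $|I|\le 3$ while \emph{(i)} has $|I|\le 2$ — that extra derivative is precisely the price of passing between the hyperboloid and the time slice, and your sketch does not use it. The actual proof in \cite[Lemma~2.4]{Geor} handles this with a more careful localization; as written, your argument for \emph{(ii)} is incomplete.
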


\begin{proof}
	For~\eqref{est:SobolevStand} and~\eqref{est:Sobolevinsi}, we refer to~\cite[Proposition 1]{KlainWave} and~\cite[Lemma 2.4]{Geor} for the details of the proof respectively. Then, the inequality~\eqref{est:SobolevGlobal} is a consequence of~\eqref{est:SobolevStand} and~\eqref{est:Sobolevinsi}.
	\end{proof}
\begin{remark}
	The inequality~\eqref{est:SobolevGlobal} by Georgiev can be used to get the pointwise decay of a nice function with $t^{-\frac{1}{2}}$ rate which is the same as in the Klainerman-Sobolev inequality in 2D, but no bound on $\|S\Gamma^{I}u\|_{L_{x}^{2}}$ is required, which is more compatible with the coupled wave--Klein-Gordon equations. However, these two kinds of inequalities, only in the 2D case, yield the same decay rate of a function; and for higher dimensional case, the decay rate derived from Georgiev \cite{Geor} is slower than the rate derived from the Klainerman-Sobolev inequality (see more details in~\cite[Lemma 2.3 and Lemma 2.4]{Geor} and \cite[Proposition 3]{KlainWave}).
\end{remark}
\subsection{Estimates on 2D linear wave equation}

In this subsection, we recall several technical estimates for 2D linear wave equation. We start with the $L^{2}$ and $L^{\infty}$ estimates for 2D homogeneous wave equation. The proofs are similar to~\cite[Theorem 4.3.1 and Theorem 4.6.1]{LZBOOK}, but they are given for the sake of completeness and for the
readers’ convenience.
\begin{lemma}[\cite{LZBOOK}]\label{le:wave}
	Let $u$ be the solution to the Cauchy problem
	\begin{equation}\label{equ:WaveCauchyhom}
	\left\{\begin{aligned}
	-\Box u(t,x)&=0,\\
	(u,\partial_{t}u)_{|t=0}&=(u_{0},u_{1}).
	\end{aligned}\right.
	\end{equation}
	Then the following estimates hold.
	
	\begin{enumerate}
		\item \emph{$L^{2}$ estimate on solution.} We have 
		\begin{equation}\label{est:L2wave}
		\left\|u(t,x)\right\|_{L_{x}^{2}}\lesssim \left\|u_{0}\right\|_{L_{x}^{2}}+\log ^{\frac{1}{2}}(2+t)\left( \|u_{1}\|_{L_{x}^{1}}+\| u_{1}\|_{L^{2}_{x}}\right).
			\end{equation}
		
		\item  \emph{$L^{\infty}$ estimate on solution.} We have 
		\begin{equation}\label{est:Linwave}
		\|u(t,x)\|_{L_{x}^{\infty}}\lesssim \langle t\rangle^{-\frac{1}{2}}\left(\|u_{0}\|_{W^{2,1}}+\|u_{1}\|_{W^{1,1}}\right).
		\end{equation}
	\end{enumerate}
\end{lemma}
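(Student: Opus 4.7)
The plan is to handle both estimates via the Fourier representation of the solution to \eqref{equ:WaveCauchyhom}:
\begin{equation*}
u(t,x)=\mathcal{F}^{-1}\Big[\cos(t|\xi|)\,\hat{u}_0(\xi)+\frac{\sin(t|\xi|)}{|\xi|}\,\hat{u}_1(\xi)\Big](x).
\end{equation*}
The two parts behave very differently because the multiplier $|\xi|^{-1}\sin(t|\xi|)$ is singular at the origin in 2D, which is precisely what forces the $\log^{1/2}(2+t)$ loss.

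For part (i), Plancherel immediately gives $\|\cos(t|\xi|)\hat{u}_0\|_{L^2_\xi}\le\|u_0\|_{L^2_x}$, so the task reduces to controlling the $u_1$-contribution. I would perform a Littlewood--Paley type split at $|\xi|=1$. On the high-frequency piece $|\xi|\ge 1$, the bound $|\sin(t|\xi|)/|\xi|\,|\le 1$ together with Plancherel yields $\lesssim \|u_1\|_{L^2_x}$. On the low-frequency piece $|\xi|\le 1$, I would use Hausdorff--Young $\|\hat{u}_1\|_{L^\infty_\xi}\lesssim \|u_1\|_{L^1_x}$ together with the pointwise bound $|\sin(t|\xi|)/|\xi|\,|\le \min(t,1/|\xi|)$, so that
\begin{equation*}
\int_{|\xi|\le 1}\min\!\bigl(t^2,\tfrac{1}{|\xi|^2}\bigr)\,d\xi \lesssim \int_0^{1/t}t^2\rho\,d\rho+\int_{1/t}^1\rho^{-1}d\rho\lesssim \log(2+t),
\end{equation*}
which after taking square roots produces exactly the $\log^{1/2}(2+t)\|u_1\|_{L^1_x}$ factor in \eqref{est:L2wave}. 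Combining the two pieces gives (i).

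For part (ii), the natural route is the 2D dispersive estimate. I would consider the half-wave propagators $e^{\pm it|\xi|}$, writing the solution as a sum of pieces of the form $\mathcal{F}^{-1}[e^{\pm it|\xi|}\hat{g}(\xi)]$ with $\hat{g}$ equal to $\hat{u}_0$ or $\hat{u}_1/(\pm i|\xi|)$. For $t\ge 1$, I would split frequency space into $|\xi|\le 1$ (where the phase has no stationary point away from $0$ on the support, handled directly with $L^1\to L^\infty$ after absorbing one derivative) and $|\xi|\ge 1$, where stationary phase in polar coordinates on spheres $|\xi|=\rho$ yields the standard $\rho^{-1/2}t^{-1/2}$ decay of the oscillatory kernel; summing a Littlewood--Paley decomposition or integrating against $\hat g$ with two extra derivatives' worth of room yields the $\langle t\rangle^{-1/2}$ factor with the data measured in $W^{2,1}$ for $u_0$ and $W^{1,1}$ for $u_1$ (the extra derivative on $u_1$ absorbs the $|\xi|^{-1}$ singularity). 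The case $0\le t\le 1$ follows trivially from Sobolev embedding $W^{2,1}(\RR^2)\hookrightarrow L^\infty$.

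The main obstacle is a presentational one: the logarithm in (i) must come out sharp, and the easiest pitfall is to mishandle the low-frequency regime by trying to use Plancherel throughout, which would only give boundedness of $\|u\|_{L^2}$ when $u_1\in \dot H^{-1}$, a space 2D functions generically fail to belong to. The Hausdorff--Young bypass at low frequency, combined with the explicit computation of $\int_{|\xi|\le 1}\min(t^2,|\xi|^{-2})\,d\xi$, is the decisive step and must be done carefully to extract the $\log$-power $1/2$ rather than $1$ or $0$.
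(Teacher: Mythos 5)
For part (i) your argument is essentially the paper's: take the Fourier transform, handle the cosine piece by Plancherel, split the sine piece at $|\xi|=1$, use Plancherel plus $|\sin(t|\xi|)/|\xi||\le 1$ at high frequency, and Hausdorff--Young at low frequency. Your way of extracting the logarithm (bound the integrand by $\min(t^2,|\xi|^{-2})$ and split the radial integral at $\rho=1/t$) and the paper's (substitute $r=t\rho$ to land on $\int_0^t \sin^2 r/r\,\d r$) are the same computation written differently.

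For part (ii) you take a genuinely different route. The paper stays in physical space with the explicit Poisson formula: it writes $u=\mathcal{I}_1+\mathcal{I}_2$, where $\mathcal{I}_1(t,x)=\tfrac{1}{2\pi}\int_{B(x,t)}u_1(y)\,(t^2-|x-y|^2)^{-1/2}\,\d y$ and $\mathcal{I}_2$ is the corresponding $\partial_t$ piece, then splits the ball of integration into the inner disc $|y|\le t-1$ (where $(t^2-|y|^2)^{-1/2}\lesssim t^{-1/2}$ trivially) and the shell $t-1\le|y|\le t$, which is treated by an integration by parts in $\rho$ plus the Newton--Leibniz trick converting pointwise values into $L^1$ norms of $\nabla u_1$. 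You instead propose the Fourier-side dispersive estimate for $e^{\pm it\sqrt{-\Delta}}$ via stationary phase. Both are standard; the paper's version is more elementary and self-contained (no Littlewood--Paley summation, no stationary phase), while yours is more modular and reuses a known black box.

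There is, however, one genuine gap in your sketch of (ii): the claim that $|\xi|\le 1$ can be ``handled directly with $L^1\to L^\infty$'' is not correct. Bounding the low-frequency multiplier pointwise and using $\|\hat g\|_{L^\infty_\xi}\lesssim\|g\|_{L^1_x}$ only gives
\[
\big|\mathcal{F}^{-1}\big[\cos(t|\xi|)\hat u_0\,\mathbf{1}_{\{|\xi|\le1\}}\big](x)\big|\lesssim\|u_0\|_{L^1_x}\int_{|\xi|\le1}\d\xi\lesssim\|u_0\|_{L^1_x},
\]
an $O(1)$ bound with no decay in $t$. To extract $\langle t\rangle^{-1/2}$ from low frequencies you still need the oscillation of $e^{\pm it|\xi|}$ at every dyadic scale $|\xi|\sim 2^j$ with $j\le 0$ (each block contributes $\lesssim 2^{2j}\min(1,(2^jt)^{-1/2})$, and the geometric sum over $j\le 0$ then produces $t^{-1/2}$); the low-frequency half demands the same stationary-phase input as the high-frequency half. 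Relatedly, the reduction of $0\le t\le1$ to the embedding $W^{2,1}(\RR^2)\hookrightarrow L^\infty$ is not free either, since the wave group is not bounded on $W^{2,1}$; one still needs the explicit kernel or finite speed of propagation there, which is what the paper does. These are repairable, but as written your low-frequency step would not close.
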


\begin{proof}
Proof of (i). Taking the Fourier transform in the Cauchy problem with respect to the argument $x$, we have
	\begin{equation*}
\left\{\begin{aligned}
&\partial_{t}^{2}\hat{u}(t,\xi)+|\xi|^{2}\hat{u}(t,\xi)=0,\\
&\hat{u}(0,\xi)=\hat{u}_{0}(\xi),\quad \partial_{t}\hat{u}(0,\xi)=\hat{u}_{1}(\xi).
\end{aligned}\right.
\end{equation*} 
We solve the above second-order ODE in $t$ to arrive at the expression of the solution $u$ in Fourier space
\begin{equation*}
\hat{u}(t,\xi)=\cos (t|\xi|)\hat{u}_{0}(\xi)+\frac{\sin (t|\xi|)}{|\xi|}\hat{u}_{1}(\xi),\quad \mbox{for}\ (t,\xi)\in [0,\infty)\times \R^{2}.
\end{equation*}
Note that, from the Plancherel theorem and polar coordinate transformation, 
\begin{equation*}
\begin{aligned}
\left\|\cos (t|\xi|)\hat{u}_{0}(\xi)\right\|_{L_{\xi}^{2}}
&\lesssim \|\hat{u}_{0}(\xi)\|_{L_{\xi}^{2}}\lesssim \|u_{0}\|_{L_{x}^{2}},\\
\left\|\frac{\sin (t|\xi|)}{|\xi|}\hat{u}_{1}(\xi)\textbf{1}_{\{|\xi|\ge 1\}}\right\|_{L_{\xi}^{2}}
&\lesssim \|\hat{u}_{1} (\xi)\|_{L_{\xi}^{2}} \left\|\frac{\sin (t|\xi|)}{|\xi|}\textbf{1}_{\{|\xi|\ge 1\}}\right\|_{L_{\xi}^{\infty}}\lesssim \|u_{1}\|_{L_{x}^{2}},\\
\left\|\frac{\sin (t|\xi|)}{|\xi|}\hat{u}_{1}(\xi)\textbf{1}_{\{|\xi|\le 1\}}\right\|_{L_{\xi}^{2}}
&\lesssim \|\hat{u}_{1} (\xi)\|_{L_{\xi}^{\infty}}\left(\int_{0}^{t}\frac{\sin^{2}r}{r}\d r\right)^{\frac{1}{2}}\lesssim \log^{\frac{1}{2}}(2+t)\|u_{1}\|_{L_{x}^{1}}.
\end{aligned}
\end{equation*}
Therefore, using again the Plancherel theorem, we obtain
\begin{equation*}
\begin{aligned}
\|u(t,x)\|_{L^{2}_{x}}=\|\hat{u}(t,\xi)\|_{L_{\xi}^{2}}
&\lesssim \left\|\cos (t|\xi|)\hat{u}_{0}(\xi)\right\|_{L^{2}_{\xi}}+\left\|\frac{\sin (t|\xi|)}{|\xi|}\hat{u}_{1}(\xi)\right\|_{L_{\xi}^{2}}\\
&\lesssim \|u_{0}\|_{L_{x}^{2}}+\|u_{1}\|_{L_{x}^{2}}+\log^{\frac{1}{2}}(2+t)\|u_{1}\|_{L_{x}^{1}},
\end{aligned}
\end{equation*}
which implies~\eqref{est:L2wave}.

Proof of (ii). From the expression of solutions for 2D linear wave equation, we decompose
\begin{equation*}
u(t,x)=\mathcal{I}_{1}(t,x)+\mathcal{I}_{2}(t,x)\quad \mbox{for}\ (t,x)\in [0,\infty)\times \R^{2},
\end{equation*} 
where
\begin{equation*}
\mathcal{I}_{1}(t,x)=\frac{1}{2\pi}\int_{B (x,t)}\frac{u_{1}(y)\d y}{\sqrt{t^{2}-|x-y|^{2}}},\quad 
\mathcal{I}_{2}(t,x)=\frac{1}{2\pi}\partial_{t}\int_{B (x,t)}\frac{u_{0}(y)\d y}{\sqrt{t^{2}-|x-y|^{2}}}.
\end{equation*}

\emph{Estimate on $\mathcal{I}_{1}$.} 
We claim that 
\begin{equation}\label{est:I1}
\left|\mathcal{I}_{1}(t,x)\right|\lesssim \langle t\rangle^{-\frac{1}{2}}\|u_{1}\|_{W^{1,1}},\quad \mbox{for}\ (t,x)\in [0,\infty)\times \R^{2}.
\end{equation}

Indeed, taking the change of variable $y\mapsto (x-y)$, we have 
\begin{equation}\label{equ:I1}
\mathcal{I}_{1}(t,x)=\frac{1}{2\pi}\int_{B (0,t)}\frac{u_{1}(x-y)}{\sqrt{t^{2}-|y|^{2}}}\d y,\quad \mbox{for}\ (t,x)\in [0,\infty)\times \R^{2}.
\end{equation}

Case I: Let $(t,x)\in [0,2]\times \R^{2}$. We rewrite~\eqref{equ:I1} as
\begin{equation*}
\mathcal{I}_{1}(t,x)=\frac{1}{2\pi}\int_{-t}^{t}\int_{-\sqrt{t^{2}-y^{2}_{1}}}^{\sqrt{t^{2}-y^{2}_{1}}}
\frac{u_{1}(x-(y_{1},y_{2}))}{\sqrt{t^{2}-y_{1}^{2}-y_{2}^{2}}}\d y_{2} \d y_{1}.
\end{equation*}
Note that, from the Newton-Leibniz formula and change of variable, we have 
\begin{equation*}
\begin{aligned}
\int_{-t}^{t}\left\|u(x-(y_{1},y_{2}))\right\|_{L_{y_{2}}^{\infty}}\d y_{1}
&\lesssim \int_{-t}^{t}\int_{\R}\left|\partial_{2}u_{1}(x-(y_{1},s))\right|\d s \d y_{1}\lesssim \|u_{1}\|_{W^{1,1}},\\
\max_{y_{1}\in [-t,t]}\int_{-\sqrt{t^{2}-y^{2}_{1}}}^{\sqrt{t^{2}-y^{2}_{1}}}\frac{\d y_{2}}{\sqrt{t^{2}-y^{2}_{1}-y^{2}_{2}}}
&= \max_{y_{1}\in [-t,t]}\int_{-1}^{1}\frac{\d s}{\sqrt{1-s^{2}}}\le 2\int_{0}^{1}\frac{\d s}{\sqrt{s}\sqrt{2-s}}\lesssim 1.
\end{aligned}
\end{equation*}
Combining the above estimates, we obtain
\begin{equation*}
\begin{aligned}
\left|\mathcal{I}_{1}(t,x)\right|\lesssim& \left(\int_{-t}^{t}\left\|u(x-(y_{1},y_{2}))\right\|_{L_{y_{2}}^{\infty}}\d y_{1}\right)\\
&\times \left(\max_{y_{1}\in [-t,t]}\int_{-\sqrt{t^{2}-y^{2}_{1}}}^{\sqrt{t^{2}-y^{2}_{1}}}\frac{\d y_{2}}{\sqrt{t^{2}-y^{2}_{1}-y^{2}_{2}}}\right)\lesssim \|u_{1}\|_{W^{1,1}},
\end{aligned}
\end{equation*}
which implies~\eqref{est:I1} for this case.

Case II: Let $(t,x)\in (2,\infty)\times \R^{2}$. We decompose
\begin{equation*}
\mathcal{I}_{1}(t,x)=\mathcal{I}_{11}(t,x)+\mathcal{I}_{12}(t,x)\quad \mbox{for}\ (t,x)\in (2,\infty)\times \R^{2},
\end{equation*}
where
\begin{equation*}
\begin{aligned}
\mathcal{I}_{11}(t,x)=\frac{1}{2\pi}\int_{B (0,{t-1})}\frac{u_{1}(x-y)\d y}{\sqrt{t^{2}-|y|^{2}}},\quad \mathcal{I}_{12}(t,x)=\frac{1}{2\pi}\int_{ B(0,t)\setminus B (0,{t-1})}\frac{u_{1}(x-y)\d y}{\sqrt{t^{2}-|y|^{2}}}.
\end{aligned}
\end{equation*}

First, we know that 
\begin{equation*}
\max_{y\in B(0,t-1)}\frac{1}{\sqrt{t^{2}-|y|^{2}}}=\max_{y\in B(0,t-1)}\frac{1}{\sqrt{t+|y|}\sqrt{t-|y|}}\lesssim \langle t\rangle^{-\frac{1}{2}},
\end{equation*}
which implies
\begin{equation}\label{est:I11}
\left|\mathcal{I}_{11}(t,x)\right|\lesssim \int_{\R^{2}}|u(y)|\d y\left(\max_{y\in B(0,t-1)}\frac{1}{\sqrt{t^{2}-|y|^{2}}}\right)\lesssim \langle t\rangle^{-\frac{1}{2}}\|u_{1}\|_{L_{x}^{1}}.
\end{equation}
Second, by the polar coordinate transformation $y=\rho\omega$ with $\d y=\rho\d \rho\d \omega$,
\begin{align*}
\mathcal{I}_{12}(t,x)
&=\frac{1}{2\pi}\int_{\partial B(0,1)}\int_{t-1}^{t}\frac{\rho u_{1}(x-\rho\omega)}{\sqrt{t^{2}-\rho^{2}}}\d \rho \d \omega\\
&=-\frac{1}{\pi}\int_{\partial B(0,1)}\int_{t-1}^{t}\frac{\rho u_{1}(x-\rho\omega)}{\sqrt{t+\rho}}\d \sqrt{t-\rho}\d \omega.
\end{align*}
Based on the above identity and integration by parts, we decompose
\begin{equation*}
\mathcal{I}_{12}(t,x)=\mathcal{I}_{12}^{1}(t,x)+\mathcal{I}_{12}^{2}(t,x),
\end{equation*}
where
\begin{align*}
\mathcal{I}_{12}^{1}(t,x)&=\frac{1}{\pi}\int_{\partial B(0,1)}\frac{(t-1)u_{1}(x-(t-1)\omega)}{\sqrt{2t-1}}\d \omega,\\
\mathcal{I}_{12}^{2}(t,x)&=\frac{1}{\pi}\int_{\partial B(0,1)}\int_{t-1}^{t}\sqrt{t-\rho}\partial_{\rho}\left(\frac{\rho u_{1}(x-\rho\omega)}{\sqrt{t+\rho}}\right)\d \rho \d \omega.
\end{align*}

Using again the Newton-Leibniz formula, we have 
\begin{equation*}
(t-1)u_{1}(x-(t-1)\omega)=-\int_{t-1}^{\infty}\left(u_{1}(x-\rho\omega)-\rho\omega\cdot \nabla u_{1}(x-\rho\omega)\right)\d \rho,
\end{equation*}
which implies
\begin{align*}
\left|\mathcal{I}_{12}^{1}(t,x)\right|
&\lesssim (2t-1)^{-\frac{1}{2}}\int_{\partial B(0,1)}\int_{t-1}^{\infty}\left(|u_{1}(x-\rho\omega)|+|\rho\omega\cdot \nabla u_{1}(x-\rho\omega)|\right)\d \rho\d \omega\\
&\lesssim \langle t\rangle^{-\frac{1}{2}} \int_{\R^{2}\setminus B(x,t-1)}\left(\frac{|u_{1}(y)|}{|x-y|}+|\nabla u_{1}(y)|\right)\d y\lesssim \langle t\rangle^{-\frac{1}{2}}\|u_{1}\|_{W^{1,1}}.
\end{align*}
On the other hand, by direct computation, we have 
\begin{equation*}
\partial_{\rho}\left(\frac{\rho u_{1}(x-\rho\omega)}{\sqrt{t+\rho}}\right)
=\frac{u_{1}(x-\rho\omega)}{\sqrt{t+\rho}}-\rho\omega\cdot\frac{\nabla u_{1}(x-\rho\omega)}{\sqrt{t+\rho}}-\frac{\rho u_{1}(x-\rho\omega)}{2(t+\rho)^{\frac{3}{2}}},
\end{equation*}
which implies
\begin{align*}
\left|\mathcal{I}_{12}^{2}(t,x)\right|
&\lesssim (2t-1)^{-\frac{1}{2}}
\int_{\partial B(0,1)}\int_{t-1}^{\infty}\left(|u_{1}(x-\rho\omega)|+\rho|\nabla u_{1}(x-\rho\omega)|\right)\d \rho \d \omega \\
&\lesssim \langle t\rangle^{-\frac{1}{2}} \int_{\R^{2}\setminus B(x,t-1)}\left(\frac{|u_{1}(y)|}{|x-y|}+|\nabla u_{1}(y)|\right)\d y\lesssim \langle t\rangle^{-\frac{1}{2}}\|u_{1}\|_{W^{1,1}}.
\end{align*}

Combining the above two estimates, we obtain
\begin{equation}\label{est:I12}
\left|\mathcal{I}_{12}(t,x)\right|\lesssim \left|\mathcal{I}_{12}^{1}(t,x)\right|+\left|\mathcal{I}_{12}^{2}(t,x)\right|\lesssim \langle t\rangle^{-\frac{1}{2}}\|u_{1}\|_{W^{1,1}}.
\end{equation}
We see that~\eqref{est:I1} for this case follows from~\eqref{est:I11} and~\eqref{est:I12}.

\smallskip
\emph{Estimate on $\mathcal{I}_{2}$.}
We claim that
\begin{equation}\label{est:I2}
\left|\mathcal{I}_{2}(t,x)\right|\lesssim \langle t\rangle^{-\frac{1}{2}}\|u_{0}\|_{W^{2,1}},\quad \mbox{for}\ (t,x)\in [0,\infty)\times \R^{2}.
\end{equation}
Indeed, taking the change of variable $y\mapsto x-ty$, we have 
\begin{equation*}
\mathcal{I}_{2}(t,x)=\frac{1}{2\pi}\partial_{t}\int_{B(0,1)}\frac{tu_{0}(x-ty)}{\sqrt{1-|y|^{2}}}\d y=\mathcal{I}_{21}(t,x)+\mathcal{I}_{22}(t,x),
\end{equation*}
where
\begin{align*}
\mathcal{I}_{21}(t,x)&=\frac{1}{2\pi}\int_{B(0,1)}\frac{u_{0}(x-ty)}{\sqrt{1-|y|^{2}}}\d y,\quad \quad \ \quad \mbox{for}\ (t,x)\in [0,\infty)\times \R^{2},\\
\mathcal{I}_{22}(t,x)&=-\frac{1}{2\pi}\int_{B(0,t)}\frac{y\cdot \nabla u_{0}(x-y)}{t\sqrt{t^{2}-|y|^{2}}}\d y,\quad \mbox{for}\ (t,x)\in [0,\infty)\times \R^{2}.
\end{align*}
Using a similar argument as in the proof of~\eqref{est:I1}, one can obtain~\eqref{est:I2}.

Combining~\eqref{est:I1} and~\eqref{est:I2}, we complete the proof of~\eqref{est:Linwave}.
	\end{proof}

Second, we introduce the $L^{2}$ and $L^{\infty}$ estimates of solutions for the 2D inhomogeneous wave equation with zero initial data. These estimates will be used to control a part of wave component $w$ which is related to quartic term $G(w,v)$ (recall Lemma \ref{le:equQ0Qab} for the expression of $G$).
\begin{lemma}
	Let $u$ be the solution to the Cauchy problem
\begin{equation*}
\left\{\begin{aligned}
-\Box u(t,x)&=f(t,x),\\
(u,\partial_{t}u)_{|t=0}&=(0,0),
\end{aligned}\right.
\end{equation*}
with $f(t,x)$ a sufficiently regular function. Then the following estimates hold.
\begin{enumerate}
	\item \emph{$L^{2}$ estimate on solution.} We have 
	\begin{equation}\label{est:L2waveinh}
	\|u(t,x)\|_{L_{x}^{2}}\lesssim \log^{\frac{1}{2}}(2+t)\int_{0}^{t}\left(\|f(s,x)\|_{L_{x}^{1}}+\|f(s,x)\|_{L_{x}^{2}}\right)\d s.
	\end{equation}
	
	\item \emph{$L^{\infty}$ estimate on solution.} We have 
	\begin{equation}\label{est:Liniwaveinh}
	\begin{aligned}
	\left\|u(t,x)\right\|_{L_{x}^{\infty}}
	&\lesssim \langle t\rangle^{-\frac{1}{2}}\int_{0}^{t}(1+s)^{-\frac{1}{2}}\left\|Sf(s,x)\right\|_{L_{x}^{1}}\d s \\
	&+\langle t\rangle^{-\frac{1}{2}}\sum_{|I|\le 1}\int_{0}^{t}(1+s)^{-\frac{1}{2}}\left\|\Gamma^{I}f(s,x)\right\|_{L_{x}^{1}}\d s.
	\end{aligned}
	\end{equation}
\end{enumerate}

\end{lemma}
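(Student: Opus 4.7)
The plan is to reduce both estimates to the homogeneous case treated in Lemma~\ref{le:wave} via Duhamel's principle. Let $v_s(t,x)$ solve $-\Box v_s = 0$ with $v_s|_{t=s} = 0$ and $\partial_t v_s|_{t=s} = f(s, \cdot)$, so that $u(t,x) = \int_0^t v_s(t,x)\,\d s$.

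For~(i), the time-translated form of \eqref{est:L2wave} applied to $v_s$ yields
$$
\|v_s(t, \cdot)\|_{L^2_x} \lesssim \log^{\frac{1}{2}}(2 + t - s)\bigl(\|f(s,\cdot)\|_{L^1_x} + \|f(s,\cdot)\|_{L^2_x}\bigr).
$$
The monotonicity $\log^{1/2}(2 + t - s)\le\log^{1/2}(2+t)$ together with Minkowski's integral inequality in $L^2_x$ then gives \eqref{est:L2waveinh} at once.

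For~(ii), the naive application of \eqref{est:Linwave} to $v_s$ would only produce the weight $\langle t - s\rangle^{-1/2}$ paired with $\|f(s,\cdot)\|_{W^{1,1}+W^{2,1}}$, which is insufficient for the sharp $\langle t\rangle^{-1/2}$-decay and fails to produce either the scaling derivative $Sf$ or the weight $(1+s)^{-1/2}$. Instead I would work directly with the 2D retarded potential representation
$$
u(t, x) = \frac{1}{2\pi} \int_0^t \int_{B(x, t-s)} \frac{f(s, y)}{\sqrt{(t-s)^2 - |x-y|^2}}\,\d y\,\d s,
$$
and split $[0, t]$ into the regions $[0, t/2]$ and $[t/2, t]$. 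On $[0, t/2]$, where $t-s\sim\langle t\rangle$, one mimics the treatment of $\mathcal{I}_1$ in the proof of Lemma~\ref{le:wave}: decompose $B(x, t-s)$ into the inner ball $B(x, t-s-1)$ (on which $1/\sqrt{(t-s)^2-|x-y|^2}\lesssim\langle t\rangle^{-1/2}$) and its thin boundary annulus (treated via polar coordinates and a radial Newton--Leibniz identity). This produces $\langle t\rangle^{-1/2}$ times $L^1_x$-norms of $f(s,\cdot)$ and $\nabla f(s,\cdot)$; the pointwise conversion \eqref{est:decaypm} then rewrites $\nabla f$ in terms of $Sf$ and $\Gamma^I f$ with $|I|\le 1$, while the extra radial weight $\langle s-|y|\rangle^{-1}$ produced by this substitution is absorbed into the desired prefactor $(1+s)^{-1/2}$. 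On $[t/2, t]$ the roles of $s$ and $t-s$ are reversed: since $s\sim\langle t\rangle$, the sharp $\langle t\rangle^{-1/2}$-decay must be generated from the $s$-factor by exploiting the scaling structure, and after the same Kirchhoff-type radial integration-by-parts one again recovers $\langle t\rangle^{-1/2}(1+s)^{-1/2}$ times $L^1_x$-norms of $Sf$ and $\Gamma^I f$, $|I|\le 1$.

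The main obstacle is the second region, where the scaling substitution and the radial integration-by-parts must be coordinated so that the sharp factors $\langle t\rangle^{-1/2}$ and $(1+s)^{-1/2}$ both emerge simultaneously without loss; all the algebraic identities (commutators and derivative-to-$\Gamma$ conversions) needed for this book-keeping are already supplied by the preliminaries of Section~\ref{sec:pre}.
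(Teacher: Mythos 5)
Your treatment of part (i) is correct and essentially the same computation the paper performs: the paper applies Duhamel's principle directly in Fourier space, writing $\hat{u}(t,\xi)=\int_0^t\frac{\sin((t-s)|\xi|)}{|\xi|}\hat{f}(s,\xi)\,\d s$, splitting at $|\xi|=1$, and estimating each piece by Plancherel, whereas you invoke the homogeneous bound \eqref{est:L2wave} time-translated and finish with Minkowski's integral inequality and the monotonicity of $\log^{1/2}(2+\cdot)$. The two presentations are equivalent.

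For part (ii) there is a genuine gap. The paper does not re-derive \eqref{est:Liniwaveinh}: it is H\"ormander's $L^1$--$L^\infty$ estimate for the wave operator, cited from~\cite{Hor} with a pointer to~\cite[Theorem~4.6.2]{LZBOOK} for a complete proof, and that proof is substantially more delicate than adapting the homogeneous argument of Lemma~\ref{le:wave}. The crux of your sketch --- converting $\nabla f(s,y)$ to $Sf$ and $\Gamma^I f$ via \eqref{est:decaypm} and then ``absorbing'' the resulting weight $\langle s-|y|\rangle^{-1}$ into the desired $(1+s)^{-1/2}$ --- does not hold: $\langle s-|y|\rangle^{-1}$ is of order one wherever $|y|$ is comparable to $s$, no matter how large $s$ is, so no decay in $s$ is gained on the bulk of the support near the light cone. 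Moreover, the right-hand side of \eqref{est:Liniwaveinh} assigns the weight $(1+s)^{-1/2}$ also to the undifferentiated term $\|f(s,\cdot)\|_{L^1_x}$ (the case $|I|=0$), and the Newton--Leibniz/boundary-annulus argument you import from the treatment of $\mathcal{I}_1$ produces undifferentiated $f$ on $[0,t/2]$ with no $s$-weight whatsoever. Finally, on $[t/2,t]$ the kernel $1/\sqrt{(t-s)^2-|x-y|^2}$ does not by itself supply $\langle t\rangle^{-1/2}$ since $t-s$ may be small, and your appeal to ``exploiting the scaling structure'' names exactly the nontrivial step of H\"ormander's argument without supplying a mechanism. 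Unless you are prepared to reproduce H\"ormander's proof in full (which rests on a spacetime decomposition adapted to the forward light cone, not on a pointwise substitution for $\nabla f$), citing the reference as the paper does is the correct move.
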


\begin{proof}
 Proof of (i). Taking the Fourier transform for the Cauchy problem and using the Duhamel's principle,
 \begin{equation*}
 \hat{u}(t,\xi)=\mathcal{I}_{3}(t,\xi)+\mathcal{I}_{4}(t,\xi),\quad t\in [0,\infty),
 \end{equation*}
 where
 \begin{align*}
 &\mathcal{I}_{3}(t,\xi)=\textbf{1}_{\{|\xi|\ge 1\}}\int_{0}^{t}\frac{\sin ((t-s)|\xi|)}{|\xi|}\hat{f}(s,\xi)\d s,\quad t\in [0,\infty),\\
 &\mathcal{I}_{4}(t,\xi)=\textbf{1}_{\{|\xi|\le 1\}}\int_{0}^{t}\frac{\sin ((t-s)|\xi|)}{|\xi|}\hat{f}(s,\xi)\d s,\quad t\in [0,\infty).
 \end{align*}
 Note that, from the Plancherel theorem
 \begin{align*}
 &\left\|\mathcal{I}_{3}(t,\xi)\right\|_{L_{\xi}^{2}}\lesssim \int_{0}^{t}\|\hat{f}(s,\xi)\|_{L_{\xi}^{2}}\left\|\frac{\sin (t-s)|\xi|}{|\xi|}\textbf{1}_{\{|\xi|\ge 1\}}\right\|_{L_{\xi}^{\infty}}\d s \lesssim\int_{0}^{t} \|{f}(s,x)\|_{L_{x}^{2}}\d s,\\
  &\left\|\mathcal{I}_{4}(t,\xi)\right\|_{L_{\xi}^{2}}\lesssim \int_{0}^{t}\|\hat{f}(s,\xi)\|_{L_{\xi}^{\infty}}\left(\int_{0}^{t}\frac{\sin^{2} r}{r}\d r\right)^{\frac{1}{2}}\d s \lesssim \log^{\frac{1}{2}}(2+ t)\int_{0}^{t}\|{f}(s,x)\|_{L_{x}^{1}}\d s.
 \end{align*}
Therefore, using again the Plancherel theorem, we have 
\begin{align*}
\|u(t,x)\|_{L_{x}^{2}}=\|\hat{u}(t,\xi)\|_{L_{\xi}^{2}}
&\lesssim 
\|\mathcal{I}_{3}(t,\xi)\|_{L_{\xi}^{2}}+\|\mathcal{I}_{4}(t,\xi)\|_{L_{\xi}^{2}}\\
&\lesssim \int_{0}^{t}\|f(s,x)\|_{L^{2}_{x}}\d s+\log^{\frac{1}{2}}(2+t) \int_{0}^{t}\|f(s,x)\|_{L_{x}^{1}}\d s,
\end{align*}
which implies~\eqref{est:L2waveinh}.

Proof of (ii). Estimate~\eqref{est:Liniwaveinh} is due to H\"ormander~\cite{Hor}. We also refer to~\cite[Theorem 4.6.2]{LZBOOK} for details of the proof.
	\end{proof}

To establish the energy estimates of the wave equation for future reference, we first introduce the standard energy $\mathcal{E}$ and the conformal energy $\mathcal{G}$ for the 2D wave equation,
\begin{equation*}
\begin{aligned}
\mathcal{E}(t,u)&=\int_{\R^{2}}\left((\partial_{t}u)^{2}+(\partial_{1}u)^{2}+(\partial_{2}u)^{2}\right)(t,x)\d x,\\
\mathcal{G}(t,u)&=\int_{\R^{2}}\bigg((Su+u)^{2}+(\Omega u)^{2}+\sum_{i=1,2}({H}_{i}u)^{2}\bigg)(t,x)\d x.
\end{aligned}
\end{equation*}

Now we recall the energy estimates for 2D wave equation from~\cite{Alin,Sogge}. 
\begin{lemma}[\cite{Alin,Sogge}]\label{le:enerE}
	Let $u$ be the solution to the Cauchy problem
		\begin{equation*}
	\left\{\begin{aligned}
	-\Box u(t,x)&=f(t,x),\\
	(u,\partial_{t}u)_{|t=0}&=(u_{0},u_{1}),
	\end{aligned}\right.
	\end{equation*}
	with $f(t,x)$ a sufficiently regular function. Then the following estimates hold.
	\begin{enumerate}
			\item \emph{Standard energy estimate.} We have 
		\begin{equation}\label{est:EnerE}
		\mathcal{E} (t,u)^{\frac{1}{2}}\lesssim 
		\mathcal{E}(0,u)^{\frac{1}{2}}+\int_{0}^{t}\|f(s,x)\|_{L_{x}^{2}}\d s.
		\end{equation}
		
		\item \emph{Conformal energy estimate.} We have 
		\begin{equation}\label{est:ConE}
		\mathcal{G}(t,u)^{\frac{1}{2}}\lesssim \mathcal{G}(0,u)^{\frac{1}{2}}+\int_{0}^{t}\|\langle s+r \rangle f(s,x)\|_{L_{x}^{2}}\d s.
		\end{equation}
		
	\end{enumerate}
\end{lemma}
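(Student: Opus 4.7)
The plan is to prove (i) by the classical multiplier $\partial_t u$ and (ii) by a conformal Killing multiplier; both calculations are standard and detailed proofs appear in \cite{Alin,Sogge}.

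For \emph{part (i)}, I would multiply $-\Box u=f$ by $\partial_t u$ and recognize the divergence identity
\begin{equation*}
(\partial_t u)(-\Box u) = -\tfrac{1}{2}\partial_t\bigl[(\partial_t u)^2+|\nabla u|^2\bigr]+\partial_i(\partial_t u\,\partial_i u).
\end{equation*}
Integrating over $\R^{2}$ (the spatial divergence drops thanks to the decay of a sufficiently regular solution) gives $-\tfrac{1}{2}\tfrac{d}{dt}\mathcal{E}(t,u)=\int_{\R^{2}} f\,\partial_t u\,dx$, and Cauchy--Schwarz produces $\bigl|\tfrac{d}{dt}\mathcal{E}(t,u)^{1/2}\bigr|\le \|f(t,\cdot)\|_{L^{2}_{x}}$. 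Integrating in time yields \eqref{est:EnerE}.

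For \emph{part (ii)}, I would use the conformal multiplier $Mu:=(t^{2}+r^{2})\partial_t u+2tx^{i}\partial_i u+tu$, associated with the conformal Killing vector field $K_{0}=(t^{2}+|x|^{2})\partial_t+2tx^{i}\partial_i$ of Minkowski spacetime together with the lower-order correction $tu$ dictated by the conformal weight in the $(1+2)$-dimensional setting. The key algebraic observation, verified directly from $tSu=t^{2}\partial_t u+tx^{i}\partial_i u$ and $x^{i}H_{i}u=tx^{i}\partial_i u+r^{2}\partial_t u$, is
\begin{equation*}
Mu = t(Su+u) + x^{i}H_{i}u.
\end{equation*}
A Morawetz--Strauss style integration by parts, combined with the pointwise identity $(\Omega u)^{2}+(x^{i}\partial_i u)^{2}=r^{2}|\nabla u|^{2}$, then reorganizes $(-\Box u)\cdot Mu$ into a spacetime divergence whose time component integrates over $\R^{2}$ to $\mathcal{G}(t,u)$ (up to an absolute constant), yielding a conservation-law identity of the form $\tfrac{d}{dt}\mathcal{G}(t,u)=C\int_{\R^{2}} f\cdot Mu\,dx$.

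From the same decomposition $Mu = t(Su+u) + x^{i}H_{i}u$ I get the pointwise bound $|Mu| \le \langle t+r\rangle\sqrt{(Su+u)^{2}+\sum_{i=1,2}(H_{i}u)^{2}}$, so Cauchy--Schwarz yields $\bigl|\int_{\R^{2}} f\cdot Mu\,dx\bigr| \lesssim \|\langle t+r\rangle f(t,\cdot)\|_{L^{2}_{x}}\,\mathcal{G}(t,u)^{1/2}$. Hence $\bigl|\tfrac{d}{dt}\mathcal{G}(t,u)^{1/2}\bigr| \lesssim \|\langle s+r\rangle f(s,\cdot)\|_{L^{2}_{x}}$, and integrating in $t$ gives \eqref{est:ConE}. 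The main obstacle is the algebra in (ii): confirming that the spacetime divergence assembled from $(-\Box u)\cdot Mu$ has time component integrating to exactly the quadratic form $(Su+u)^{2}+(\Omega u)^{2}+\sum_{i}(H_{i}u)^{2}$ in the statement, and not to some other positive combination of vector-field derivatives. This relies on the conformal Killing structure of $K_{0}$ in $(1+2)$-Minkowski and is the classical Morawetz--Strauss computation carried out in \cite{Alin,Sogge}; we would simply follow it.
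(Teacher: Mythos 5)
Your proposal follows essentially the same route as the paper: multiplier $\partial_t u$ for the standard energy, and the conformal multiplier $K_0u+tu=t(Su+u)+x^iH_iu$ for the conformal energy, with the same pointwise bound $|K_0u+tu|\lesssim\langle t+r\rangle\bigl(|Su+u|+\sum_i|H_iu|\bigr)$ feeding Cauchy--Schwarz. The only hiccup is a cosmetic sign flip in your divergence identity in part (i) --- it should read $(\partial_t u)(-\Box u)=\tfrac12\partial_t[(\partial_t u)^2+|\nabla u|^2]-\partial_i(\partial_t u\,\partial_i u)$ --- but this disappears once you take absolute values, so the argument is sound.
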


\begin{proof}
	Proof of (i). First, we can rewrite the product $\left(-\Box u\right)\partial_{t}u$ as the following divergence form,
	\begin{equation*}
	\left(-\Box u\right)\partial_{t}u=\frac{1}{2}\sum_{\alpha=0}^{2}\partial_{t}(\partial_{\alpha}u)^{2}-\partial^{i}\left(\partial_{i}u\partial_{t}u\right).
		\end{equation*}
		Integrating the above identity on $[0,t]\times \R^{2}$ for any $t>0$, and using $-\Box u=f$
		\begin{equation*}
		\begin{aligned}
		\mathcal{E}(t,u)
		&\le\mathcal{E}(0,u)+2\int_{0}^{t}\left\|f(s,x)\right\|_{L_{x}^{2}}\left\|\partial_{t}u(s,x)\right\|_{L_{x}^{2}}\d s\\
		&\le \mathcal{E}(0,u)+2\int_{0}^{t}\left\|f(s,x)\right\|_{L_{x}^{2}}\mathcal{E}(s,u)^{\frac{1}{2}}\d s.
		\end{aligned}
		\end{equation*}
		Based on the above inequality, we obtain~\eqref{est:EnerE}.
	
	Proof of (ii). Here, we briefly sketch the proof and refer to \cite[Theorem 6.11]{Alin} for the complete proof.
	Consider the nonspacelike multiplier
	\begin{equation*}
	K_{0}=(r^{2}+t^{2})\partial_{t}+2rt\partial_{r}\quad \mbox{where}\ \partial_{r}=\frac{x^{i}}{r}\partial_{i}.
	\end{equation*}
	By an elementary computation, we have 
	\begin{equation*}
	\begin{aligned}
	-\Box u\left(K_{0}u+tu\right)
	&=\frac{1}{2}\partial_{t}\bigg((Su+u)^{2}+(\Omega u)^{2}+({H}_{1}u)^{2}+({H}_{2}u)^{2}\bigg)\\
	&+\partial^{i}\left(tx_{i}\left(-(\partial_{t}u)^{2}+(\partial_{1}u)^{2}+(\partial_{2}u)^{2}\right)-tu\partial_{i}u\right)\\
	&-\partial^{i}\left((r^{2}+t^{2})(\partial_{t}u)(\partial_{i}u)+2rt(\partial_{r}u)(\partial_{i}u)+ {\frac{1}{2}\partial_t}(x_{i}u^{2})\right).
	\end{aligned}
	\end{equation*} 
	Integrating the above identity on $[0,t]\times \R^{2}$ and then using the Cauchy-Schwarz inequality, we see that 
	\begin{equation*}
	\mathcal{G}(t,u)-\mathcal{G}(0,u)\lesssim \int_{0}^{t}\left\|\langle s+r\rangle f(s,x)\right\|_{L_{x}^{2}}\left\|\langle s+r\rangle^{-1}\left(K_{0}u+tu\right)\right\|_{L_{x}^{2}}\d s.
	\end{equation*}
	From the definition of $K_{0}$, we observe that 
	\begin{equation*}
	K_{0}u=tSu+r(t\partial_{r}u+r\partial_{t}u)\Rightarrow\left|K_{0}u+tu\right|\lesssim \langle t+r\rangle \left(|Su+u|+|H_{1}u|+|H_{2}u|\right).
	\end{equation*}
	Combining the above inequalities, we obtain~\eqref{est:ConE}.
	\end{proof}

Last, we recall the extra decay for Hessian of 2D inhomogeneous wave equation. 
For the convenience of the reader, we revisit the complete proof in ~\cite{LEMA}.

\begin{lemma}[Extra decay for Hessian]\label{lem:Hessian}
	Let $u$ be the solution of the Cauchy problem
		\begin{equation*}
	\left\{\begin{aligned}
	-\Box u(t,x)&=f(t,x),\\
	(u,\partial_{t}u)_{|t=0}&=(u_{0},u_{1}).
	\end{aligned}\right.
	\end{equation*}
	Then we have 
	\begin{equation}\label{est:extwaveHe}
	\left|\partial \partial u\right|\lesssim \sum_{|I|=0,1}\frac{1}{\langle t-r\rangle}\left|\partial \Gamma^{I} u\right|+\frac{t}{\langle t-r\rangle}|f|,\quad \mbox{for}\ r\le 2t.
	\end{equation}
\end{lemma}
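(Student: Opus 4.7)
The plan is to derive a linear algebraic system for the six components of the Hessian $M_{\alpha\beta}:=\partial_\alpha\partial_\beta u$ in terms of $\partial\Gamma^I u$ (with $|I|\le 1$) and $f$, invert it, and read off the claimed estimate.

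When $|t-r|\le 1$ the bound is trivial: on that region $\langle t-r\rangle\lesssim 1$, and the sum $\sum_{|I|=1}|\partial\Gamma^I u|$ already contains every component of $|\partial\partial u|$ (by choosing $\Gamma=\partial_\alpha$). From now on I therefore assume $r\le 2t$ and $|t-r|\ge 1$, so that $t+r\sim t$, $|t^2-r^2|=(t+r)|t-r|\sim t\langle t-r\rangle$, and $\tfrac{1}{t}\lesssim\tfrac{1}{\langle t-r\rangle}$.

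Applying $\partial_\alpha$ to the identity $H_i u=t\partial_i u+x_i\partial_t u$ and using the commutator $[\partial_\alpha,H_i]u=\delta_{\alpha0}\partial_i u+\delta_{\alpha i}\partial_t u$ yields the six identities
\begin{equation*}
tM_{\alpha i}+x_i M_{\alpha 0}=\partial_\alpha H_i u-\delta_{\alpha 0}\partial_i u-\delta_{\alpha i}\partial_t u,\qquad \alpha\in\{0,1,2\},\ i\in\{1,2\}.
\end{equation*}
Taking $\alpha=0$ expresses $M_{0i}=t^{-1}(\partial_t H_i u-\partial_i u-x_i M_{00})$; taking $\alpha=i$ then expresses $M_{ii}=t^{-1}(\partial_i H_i u-\partial_t u-x_i M_{0i})$, which is quadratic in $t^{-1}$ and still involves $M_{00}$. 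Substituting these into the wave-equation constraint $M_{11}+M_{22}=M_{00}+f$ (coming from $-\Box u=f$) eliminates the unknowns $M_{11},M_{22},M_{01},M_{02}$ and leaves the single scalar equation
\begin{equation*}
\frac{r^2-t^2}{t^2}\,M_{00}=-f+O\!\left(\tfrac{1}{t}\sum_{|I|\le 1}|\partial\Gamma^I u|\right).
\end{equation*}

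Solving for $M_{00}$ and using $|t^2-r^2|\sim t\langle t-r\rangle$ together with $|x_i|\le r\lesssim t$ gives
\begin{equation*}
|M_{00}|\lesssim \frac{t\,|f|}{\langle t-r\rangle}+\sum_{|I|\le 1}\frac{|\partial\Gamma^I u|}{\langle t-r\rangle},
\end{equation*}
which is exactly the claimed estimate for $M_{00}$. Plugging this back into the expressions for $M_{0i}$ and $M_{ii}$ (and using the analogous identity $tM_{12}+x_1M_{02}=\partial_2 H_1u$ for the mixed component $M_{12}$) transfers the same bound to every component of the Hessian. The main subtlety is the inversion step: the six $H_i$-identities alone form a system whose coefficient matrix degenerates exactly at $r=t$, and it is only the combination with the wave-equation constraint that brings $r^2-t^2$ into the denominator with the correct power, producing the sharp weight $t/\langle t-r\rangle$ on the inhomogeneity $f$ and $1/\langle t-r\rangle$ on the first-derivative terms.
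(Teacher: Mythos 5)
Your argument is essentially the same as the paper's: the paper writes the identity $-\Box=\frac{(t-r)(t+r)}{t^{2}}\partial_{t}^{2}+\frac{x^{i}}{t^{2}}\partial_{t}H_{i}-\frac{1}{t}\partial^{i}H_{i}+\frac{2}{t}\partial_{t}-\frac{x^{i}}{t^{2}}\partial_{i}$ and solves first for $\partial_{t}^{2}u$, which is exactly what your substitution of the $H_{i}$-identities into the wave-equation constraint produces (up to a harmless sign slip, $M_{11}+M_{22}=M_{00}-f$, not $M_{00}+f$), and then both proofs propagate the bound to $\partial_{i}\partial_{t}u$ and $\partial_{i}\partial_{j}u$ by the same remaining $H_{i}$-relations. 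The only cosmetic difference is the choice of trivial region ($|t-r|\lesssim1$ for you versus $t\le1$ in the paper), both of which are fine.
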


\begin{proof}
Case I: Let $t\in [0,1]$. It is easily seen that 
\begin{equation}\label{est:ext1}
\left|\partial \partial u\right|\lesssim \sum_{|I|=0,1}\left|\partial \Gamma^{I} u\right|\lesssim
\sum_{|I|=0,1}\frac{1}{\langle t-r\rangle}\left|\partial \Gamma^{I} u\right|+\frac{t}{\langle t-r\rangle}|f|,\quad \mbox{for}\ r\le 2t.
\end{equation}
Case II: Let $t\in (1,\infty)$. We first express the wave operator $-\Box$ by $\partial_{\alpha}$ and $H_{i}$ to obtain
\begin{align*}
-\Box=\frac{(t-r)(t+r)}{t^{2}}\partial_{t}\partial_{t}+\frac{x^{i}}{t^{2}}\partial_{t}H_{i}-\frac{1}{t}\partial^{i}H_{i}+\frac{2}{t}\partial_{t}-\frac{x^{i}}{t^{2}}\partial_{i}.
\end{align*}
Based on the above identity and $-\Box u=f$, we have 
\begin{equation}\label{est:ext2tt}
\left|\partial_{t}\partial_{t}u\right|\lesssim \sum_{|I|=0,1}\frac{1}{\langle r-t\rangle}\left|\partial \Gamma^{I}u\right|+\frac{t^{2}}{\langle r-t \rangle \langle r+t\rangle}|f|,\quad \mbox{for}\ r\le 2t.
\end{equation}
On the other hand, for $i,j\in \{1,2\}$, we have the following two identities
\begin{align*}
\partial_{i}\partial_{t}&={-\frac{x_{i}}{t}}\partial_{t}\partial_{t}+\frac{1}{t}\partial_{t}{H}_{i}-\frac{1}{t}\partial_{i},\\
\partial_{i}\partial_{j}&=\frac{x_{i}x_{j}}{t^{2}}\partial_{t}\partial_{t}-\frac{x_{i}}{t^{2}}\partial_{t}H_{j}+\frac{1}{t}\partial_{j}H_{i}-\frac{\delta_{ij}}{t}\partial_{t}+\frac{x_{i}}{t^{2}}\partial_{j}.
\end{align*}
Based on the above two identities and~\eqref{est:ext2tt}, we have 
\begin{equation}\label{est:ext2tx}
|\partial_{i}\partial_{t}u|+|\partial_{i}\partial_{j}u|\lesssim
\sum_{|I|=0,1}\frac{1}{\langle r-t\rangle}\left|\partial \Gamma^{I}u\right|+\frac{t^{2}}{\langle r-t \rangle \langle r+t\rangle}|f| ,\quad \mbox{for}\ r\le 2t.
\end{equation}
We see that~\eqref{est:extwaveHe} follows from~\eqref{est:ext1},~\eqref{est:ext2tt} and~\eqref{est:ext2tx}.
	\end{proof}

\begin{remark}
	The above Lemma states that, for the solution $u$ of wave equation, the Hessian form $\partial\partial u$ has extra $\langle t-r\rangle^{-1}$ decay than $\partial u$ in the spacetime region $\left\{(t,x):r\le 2t\right\}$ if the source term $f$ has sufficiently fast decay. Note that, this extra decay can be used to obtain the sharp pointwise decay for $\partial w$ thanks to the hidden divergence form structure in the wave equation of $w$ (see more details in \S\ref{Se:mainpro}).
\end{remark}

\subsection{Estimates on 2D linear Klein-Gordon equation}

In this subsection, we recall some decay estimates and energy estimates for 2D linear Klein-Gordon equation. First, we recall the following decay estimates from~\cite{Geor}.

\begin{theorem}[\cite{Geor}]\label{thm:KGdecay}
	Let $u$ be the solution of the Cauchy problem
		\begin{equation*}
	\left\{\begin{aligned}
	\left(-\Box+1\right) u(t,x)&=f(t,x),\\
	(u(0,x),\partial_{t}u(0,x))&=(u_{0}(x),u_{1}(x)),
	\end{aligned}\right.
	\end{equation*}
	with $f(t,x)$ a sufficiently regular function. Then we have 
	\begin{equation}\label{est:KGLini}
	\begin{aligned}
	\langle t+r\rangle|u(t,x)|
	&\lesssim \sum_{j=0}^{\infty}\sum_{|I|\le 5}\left\|\langle |x|\rangle \psi_{j}(|x|)\Gamma^{I}u(0,x)\right\|_{L_{x}^{2}}\\
	&+\sum_{j=0}^{\infty}\sum_{|I|\le 4}\max_{0\le s\le t}\psi_{j}(s)\left\|\langle s+|x|\rangle\Gamma^{I}f(s,x)\right\|_{L_{x}^{2}}.
	\end{aligned}
	\end{equation}
\end{theorem}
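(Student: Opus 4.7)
The plan is to follow Georgiev's hyperboloidal analysis \cite{Geor}. The strategy combines (i) a reduction of the inhomogeneous problem to the homogeneous one via Duhamel, (ii) a dyadic decomposition of the data and source in the spatial/temporal radial variable, and (iii) an $L^{2}$--$L^{\infty}$ Sobolev embedding on Lorentzian hyperboloids that exploits the commutation $[-\Box+1,\Gamma]=0$ for every $\Gamma\in V$.

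First, I would split $u=u_{\mathrm{h}}+u_{\mathrm{inh}}$, where $u_{\mathrm{h}}$ solves the homogeneous Cauchy problem with data $(u_{0},u_{1})$ and $u_{\mathrm{inh}}(t,x)=\int_{0}^{t}\mathcal{T}(t-s)f(s,\cdot)(x)\,\d s$, with $\mathcal{T}(\sigma)$ the Klein-Gordon propagator. Since each $\Gamma$ commutes with $-\Box+1$, applying $\Gamma^{I}$ to $u_{\mathrm{inh}}$ produces a Klein-Gordon wave with source $\Gamma^{I}f(s,\cdot)$ entering at time $s$. Plugging each time slice into the homogeneous estimate and integrating in $s$ yields the source contribution; the weight $\psi_{j}(s)$ in the right-hand side comes from a dyadic decomposition of the $s$-integral, with the $\max_{0\le s\le t}$ replacing the integral by virtue of the outer weight $\langle t+r\rangle^{-1}$ absorbing the single length $\sim 2^{j}$.

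For the homogeneous part I would dyadically localize the data in space via $\psi_{j}(|x|)$, reducing to pieces $(u_{0}^{(j)},u_{1}^{(j)})$ supported in an annulus $\{|x|\sim 2^{j}\}$. Inside the light cone one passes to the hyperboloidal foliation $\mathcal{H}_{\tau}=\{t^{2}-|x|^{2}=\tau^{2}\}$; since $H_{i}$ and $\Omega$ are tangential to $\mathcal{H}_{\tau}$, the $L^{2}$ norm of $\Gamma^{I}u$ on $\mathcal{H}_{\tau}$ is controlled by the corresponding norm of the data through the Klein-Gordon energy identity on hyperboloids. A hyperboloidal Sobolev embedding then gives $L^{\infty}$ control of $u$ by the $H^{2}$ norm on $\mathcal{H}_{\tau}$, weighted by $\tau^{-1}\sim \langle t+r\rangle^{-1}$. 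The count of $5$ vector fields arises as $2$ tangential derivatives for the Sobolev embedding on $\mathcal{H}_{\tau}$ plus up to $3$ further derivatives to convert hyperboloidal norms back to Cartesian norms on $\{t=0\}$ weighted by $\langle |x|\rangle$; the count drops to $4$ for the source because the extra weight $\langle s+|x|\rangle$ already absorbs one derivative worth of growth. In the exterior region $r\gtrsim t$, where hyperboloidal coordinates degenerate, I would instead combine \eqref{est:SobolevStand} with the standard Klein-Gordon energy estimate, using that $\langle t+r\rangle\sim \langle r\rangle$ there.

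The main obstacle will be the transition region $r\sim t$, where the hyperboloidal frame becomes nearly null and the Sobolev-on-hyperboloid estimate deteriorates. Georgiev's resolution is a delicate partition of unity separating the interior, the light-cone layer, and the exterior, absorbing the degeneracy into the weight $\langle |x|\rangle$ in the initial-data norm; a second technical point is proving the scale-invariant bound uniformly in $j$ so that the sum $\sum_{j}\psi_{j}(|x|)$ converges. Once the interior and exterior bounds are matched and the dyadic summation is carried out, the Duhamel argument of the first paragraph delivers the source term and completes the proof; in practice, since the estimate is stated as Georgiev's theorem, one can simply cite the corresponding statement in \cite{Geor}.
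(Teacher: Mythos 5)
The paper does not actually prove Theorem~\ref{thm:KGdecay}: the estimate \eqref{est:KGLini} is quoted from Georgiev \cite{Geor}, and the authors only prove the simplified Corollary~\ref{coro:KGdecay} that follows it. So the only part of your proposal that can be matched against the paper is your closing remark that one may simply cite \cite{Geor} --- which is exactly what the authors do. Everything before that goes beyond the paper's treatment and has to be judged as a free-standing reconstruction of Georgiev's argument.

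As such a reconstruction, the outline (Duhamel, dyadic localization, invariant Sobolev and energy estimates exploiting $[-\Box+1,\Gamma]=0$ for $\Gamma\in V$) is plausible, but the load-bearing steps are asserted rather than argued. First, the derivative counts $|I|\le 5$ and $|I|\le 4$ are ``derived'' only by heuristic bookkeeping (``2 tangential plus 3 to convert'', ``the weight absorbs one derivative''); nothing in the sketch would actually produce these numbers. Second, the mechanism you give for the source term is stated backwards: on $\operatorname{supp}\psi_{j}(s)$ one has $\langle s+|x|\rangle\gtrsim 2^{j}$, and it is this interior weight that compensates the length $\sim 2^{j}$ of the time slab when the Duhamel $s$-integral is replaced by $\max_{0\le s\le t}\psi_{j}(s)\|\langle s+|x|\rangle\Gamma^{I}f\|_{L^{2}_{x}}$; the prefactor $\langle t+r\rangle^{-1}$ is the decay being proved, not what ``absorbs'' the slab length. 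Third, the hyperboloidal step is precisely where the difficulty sits for the non-compactly supported data relevant here: the hyperboloids $t^{2}-|x|^{2}=\tau^{2}$ foliate only the interior of a light cone, so controlling the hyperboloidal energies of $\Gamma^{I}u$ by weighted $t=0$ norms, and matching with the exterior region $r\gtrsim t$, is the genuine technical content (and is why the weight $\langle|x|\rangle$ and the localizations $\psi_{j}(|x|)$ appear in the data norm); your sketch defers all of this to ``Georgiev's resolution'', i.e.\ back to the citation. In short, the proposal is an acceptable roadmap but not a proof; its honest content coincides with the paper's, namely an appeal to \cite{Geor}.
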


As a consequence, we have the following simplified version of Theorem~\ref{thm:KGdecay}.
\begin{corollary}\label{coro:KGdecay}
	With the same settings as Theorem~\ref{thm:KGdecay}, let $\delta_{0}>0$ and assume 
	\begin{equation}\label{est:assumf}
	\sum_{|I|\le 4}\max_{0\le s\le t}\langle s\rangle^{\delta_{0}}\|\langle s+|x|\rangle \Gamma^{I}f(s,x)\|_{L_{x}^{2}}\le C_{f},
	\end{equation}
	then we have 
	\begin{equation}\label{est:KGLINI}
	\langle t+r\rangle |u(t,x)|\lesssim \frac{C_{f}}{1-2^{-\delta_{0}}}+\sum_{|I|\le 5}\left\|\langle |x|\rangle \log (2+|x|)\Gamma^{I}u(0,x)\right\|_{L_{x}^{2}}.
	\end{equation}
\end{corollary}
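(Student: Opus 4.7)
The plan is to invoke Theorem~\ref{thm:KGdecay} directly and then reduce the two Littlewood--Paley-indexed sums on its right-hand side to the quantities appearing in~\eqref{est:KGLINI}. The data contribution will be handled by a Cauchy--Schwarz argument against the $\log$-weight, whereas the source contribution will be summed as a geometric series using the hypothesis~\eqref{est:assumf}.

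For the data term, for each multi-index $I$ with $|I|\le 5$ I would apply Cauchy--Schwarz in $j$ to bound
\begin{equation*}
\sum_{j\ge 0}\bigl\|\langle|x|\rangle\psi_j(|x|)\Gamma^I u(0,x)\bigr\|_{L^2_x}\le\Bigl(\sum_{j\ge 0}(1+j)^{-2}\Bigr)^{1/2}\Bigl(\sum_{j\ge 0}(1+j)^2\bigl\|\langle|x|\rangle\psi_j(|x|)\Gamma^I u(0,x)\bigr\|^2_{L^2_x}\Bigr)^{1/2}.
\end{equation*}
On $\operatorname{supp}\psi_j$ with $j\ge 1$ one has $|x|\approx 2^j$, hence $1+j\lesssim \log(2+|x|)$; combined with the finite overlap of $\{\psi_j\}_{j\ge 0}$, this yields the pointwise bound $\sum_{j\ge 0}(1+j)^2\psi_j^2(|x|)\lesssim \log^2(2+|x|)$. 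After integration, the data sum is therefore controlled by $\|\langle|x|\rangle\log(2+|x|)\Gamma^I u(0,x)\|_{L^2_x}$, which matches the second term in~\eqref{est:KGLINI}.

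For the source term, I would observe that for $j\ge 1$ the support of $\psi_j(s)$ lies in $s\in[2^{j-1},2^{j+1}]$, so that $\langle s\rangle^{-\delta_0}\lesssim 2^{-j\delta_0}$ on that support (and for $j=0$ the factor is trivially $\lesssim 1$). The hypothesis~\eqref{est:assumf} then gives, uniformly in $0\le s\le t$,
\begin{equation*}
\psi_j(s)\sum_{|I|\le 4}\bigl\|\langle s+|x|\rangle\Gamma^I f(s,x)\bigr\|_{L^2_x}\lesssim \psi_j(s)\langle s\rangle^{-\delta_0}C_f\lesssim 2^{-j\delta_0}C_f,
\end{equation*}
and summing the geometric series $\sum_{j\ge 0}2^{-j\delta_0}=(1-2^{-\delta_0})^{-1}$ produces exactly the prefactor appearing in~\eqref{est:KGLINI}.

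I do not anticipate a conceptual obstacle here: the content of the corollary is essentially a repackaging of the dyadic estimate~\eqref{est:KGLini}, and the only careful step is to confirm that the $\log$-weight on the data absorbs the arithmetic factor $1+j$ produced by Cauchy--Schwarz, while the polynomial decay assumption on $f$ dominates the geometric tail coming from the dyadic widths.
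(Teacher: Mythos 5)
Your proof is correct and follows essentially the same route as the paper: Theorem~\ref{thm:KGdecay} is applied, the data sum is controlled by Cauchy--Schwarz in $j$ with the arithmetic factor $(1+j)$ absorbed by the logarithmic weight (exploiting $1+j\lesssim\log(2+|x|)$ on $\operatorname{supp}\psi_j$ and $\sum_j\psi_j=1$), and the source sum is a geometric series in $2^{-j\delta_0}$ obtained from the polynomial decay hypothesis on $\psi_j$'s support. The only cosmetic difference is that the paper splits $\psi_j=\psi_j^{1/2}\cdot\psi_j^{1/2}$ before applying Cauchy--Schwarz, whereas you bound $\sum_j(1+j)^2\psi_j^2(|x|)\lesssim\log^2(2+|x|)$ directly using finite overlap; both amount to the same estimate.
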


\begin{proof}
	From $\mbox{supp}\psi_{0}\cap\R_{+}=[0,2]$ and $\mbox{supp}\psi_{j}=[2^{j-1},2^{j+1}]$ for $j\ge 1$, we infer
		\begin{align*}
	&\sum_{|I|\le 5}\left\|\langle |x|\rangle \psi_{j}( |x| )\Gamma^{I}u(0,x)\right\|_{L_{x}^{2}}\\
	&\lesssim \sum_{|I|\le 5}\left\|\langle x\rangle \log (2+|x|)\psi^{\frac{1}{2}}_{j}(| x|)\Gamma^{I}u(0,x)\right\|_{L_{x}^{2}}\left\|\frac{\psi_{j}^{\frac{1}{2}}(|x|)}{\log (2+|x|)}\right\|_{L^{\infty}_{x}}\\
	&\lesssim \frac{1}{(j+1)}\sum_{|I|\le 5}\left\|\langle |x|\rangle \log (2+|x|)\psi^{\frac{1}{2}}_{j}(| x|)\Gamma^{I}u(0,x)\right\|_{L_{x}^{2}}\quad \mbox{for}\ j\ge 0.
	\end{align*}
	Based on the above estimates, the Cauchy-Schwarz inequality and $\sum\psi_{j}=1$, 
	\begin{equation}\label{est:ini}
	\sum_{j=0}^{\infty}\sum_{|I|\le 5}\left\|\langle |x|\rangle \psi_{j}( |x| )\Gamma^{I}u(0,x)\right\|_{L_{x}^{2}}\lesssim \sum_{|I|\le 5}\left\|\langle |x|\rangle \log (2+|x|)\Gamma^{I}u(0,x)\right\|_{L_{x}^{2}}.
	\end{equation}
	On the other hand, using again the definition of $\psi_{j}$ and~\eqref{est:assumf}, for $j\ge 0$, we have 
	\begin{equation*}
	\max_{0\le s\le t}\psi_{j}(s)\left\|\langle s+|x|\rangle\Gamma^{I}f(s,x)\right\|_{L^{2}_{x}}\lesssim C_{f}\max_{0\le s\le t}\psi_{j}(s)\langle s\rangle^{-\delta_{0}}\lesssim C_{f}2^{-j\delta_{0}},
	\end{equation*}
	which implies
	\begin{equation}\label{est:f}
	\sum_{j=0}^{\infty}\sum_{|I|\le 4}	\max_{0\le s\le t}\psi_{j}(s)\left\|\langle s+|x|\rangle\Gamma^{I}f(s,x)\right\|_{L^{2}_{x}}\lesssim C_{f}\sum_{j=0}^{\infty}2^{-j\delta_{0}}\lesssim \frac{C_{f}}{1-2^{\delta_{0}}}.
	\end{equation}
	We see that~\eqref{est:KGLINI} follows from~\eqref{est:ini} and~\eqref{est:f}.
	\end{proof}

Similar to the case of 2D linear wave equation, we introduce the following standard energy $\mathcal{E}_{1}$ for the 2D linear Klein-Gordon equation,

\begin{equation*}
\mathcal{E}_{1}(t,u)=\int_{\R^{2}}\left((\partial_{t}u)^{2}+(\partial_{1}u)^{2}+(\partial_{2}u)^{2}+u^{2}\right)(t,x)\d x.
\end{equation*}

Now we introduce the energy estimates for 2D Klein-Gordon equation.

\begin{lemma}
Let $u$ be the solution to the Cauchy problem
\begin{equation*}
\left\{\begin{aligned}
\left(-\Box+1\right) u(t,x)&=f(t,x),\\
(u(0,x),\partial_{t}u(0,x))&=(u_{0}(x),u_{1}(x)),
\end{aligned}\right.
\end{equation*}
with $f(t,x)$ a sufficiently regular function. Then the following estimates hold.
	\begin{enumerate}
		\item \emph{Standard energy estimate.} We have 
		\begin{equation}\label{est:EnergK}
		\mathcal{E}_{1}(t,u)^{\frac{1}{2}}\lesssim 
		\mathcal{E}_{1}(0,u)^{\frac{1}{2}}+\int_{0}^{t}\|f(s,x)\|_{L_{x}^{2}}\d s.
		\end{equation}
		
		\item \emph{Ghost weight estimate (see also \cite{Alinhac01b}).} For all $\delta_{0}>0$, we have 
		\begin{equation}\label{est:GhostGu}
		\begin{aligned}
&	\sum_{i=1,2}\int_{0}^{t}\langle s \rangle^{-\delta_{0}}\int_{\R^{2}}\left(\frac{u^{2}}{\langle r-s\rangle^{\frac{3}{2}}}+\frac{|G_{i}u|^{2}}{\langle r-s\rangle^{\frac{3}{2}}}\right)\d x \d s\\
	&\lesssim \mathcal{E}_1(0,u)+\int_{0}^{t}\langle s\rangle^{-\delta_{0}}\|f(s,x)\|_{L_{x}^{2}}\|\partial_{t}u(s,x)\|_{L^{2}_{x}}\d s.
	\end{aligned}
		\end{equation}
		
	\end{enumerate}
\end{lemma}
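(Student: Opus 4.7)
For part (i), I would use the classical multiplier $\partial_t u$: the product $(-\Box u + u)\partial_t u = f \partial_t u$ rewrites as the divergence identity
\[
\tfrac{1}{2}\partial_t\big((\partial_t u)^2 + (\partial_1 u)^2 + (\partial_2 u)^2 + u^2\big) - \partial^{i}(\partial_i u\, \partial_t u) = f\, \partial_t u.
\]
Integrating this over $[0,t]\times \R^{2}$ yields $\mathcal{E}_1(t,u) = \mathcal{E}_1(0,u) + 2\int_0^t\!\!\int_{\R^2} f\,\partial_t u\, dx\, ds$. Applying Cauchy-Schwarz in $x$ gives $\tfrac{d}{ds}\mathcal{E}_1(s,u) \le 2\|f(s)\|_{L^2}\,\mathcal{E}_1(s,u)^{1/2}$, which bootstraps to \eqref{est:EnergK} in the usual way.

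For part (ii), I would adapt Alinhac's ghost weight method. Define $q(\sigma)=\int_{-\infty}^{\sigma}\langle\tau\rangle^{-3/2}\,d\tau$, so $q$ is smooth, bounded on $\R$, and $q'(\sigma)=\langle\sigma\rangle^{-3/2}$. Multiply the equation by the weighted multiplier $\langle t\rangle^{-\delta_0} e^{q(r-t)}\partial_t u$. Using $\partial_t q(r-t) = -q'(r-t)$ and $\partial_i q(r-t) = (x_i/r)\,q'(r-t)$, I would derive the pointwise identity
\[
\begin{aligned}
\langle t\rangle^{-\delta_0} e^{q}\,(-\Box u + u)\partial_t u
&= \partial_t\big[\langle t\rangle^{-\delta_0} e^{q} A\big] - \partial^{i}\big[\langle t\rangle^{-\delta_0} e^{q}\partial_i u\,\partial_t u\big] \\
&\quad + \delta_0\tfrac{t}{\langle t\rangle^{2+\delta_0}} e^{q} A + \tfrac{1}{2}\langle t\rangle^{-\delta_0} e^{q} q'\big(\textstyle\sum_{i}|G_i u|^2 + u^2\big),
\end{aligned}
\]
where $A = \tfrac{1}{2}\big((\partial_t u)^2 + |\nabla u|^2 + u^2\big)$. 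The crucial algebraic step is the identity $A + \frac{x^i}{r}\partial_i u\,\partial_t u = \tfrac{1}{2}\big(\sum_{i}|G_i u|^2 + u^2\big)$, which uses $\sum_i (x_i/r)^2 = 1$ together with the definition $G_i u = (x_i/r)\partial_t u + \partial_i u$, and which converts the cross-term produced by $\partial_i q$ into a coercive $G_i$-contribution. Integrating on $[0,t]\times\R^2$, the third term on the right is nonnegative and can be discarded from the left, the boundary term at $t=0$ is controlled by $\mathcal{E}_1(0,u)$ since $e^q$ is bounded, and Cauchy-Schwarz in $x$ on the forcing term produces $\int_0^t \langle s\rangle^{-\delta_0}\|f\|_{L^2}\|\partial_t u\|_{L^2}\,ds$; using the uniform lower bound $e^q \gtrsim 1$ on the coercive term then delivers \eqref{est:GhostGu}.

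The only subtlety is the bookkeeping for the weight $\langle t\rangle^{-\delta_0}$: its time derivative $-\delta_0 t/\langle t\rangle^{2+\delta_0}$ has a favorable sign, so it provides a free nonnegative contribution on the left rather than an obstruction, which is why no growth factor appears on the right-hand side. Everything else is standard divergence-theorem manipulation, so I expect no serious difficulty beyond carefully tracking the sign conventions in the pointwise identity.
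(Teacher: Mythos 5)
Your proposal reproduces the paper's argument almost verbatim for both parts: part (i) is the standard $\partial_t u$ multiplier argument followed by a Gronwall-type absorption, and part (ii) uses exactly the weighted ghost multiplier $\langle t\rangle^{-\delta_0}e^{q(r-t)}\partial_t u$ with $q(\sigma)=\int_{-\infty}^{\sigma}\langle\tau\rangle^{-3/2}\,\d\tau$, the same pointwise divergence identity, and the same algebraic recombination $A+\frac{x^i}{r}\partial_i u\,\partial_t u=\frac{1}{2}(\sum_i |G_i u|^2+u^2)$ converting the cross term into the coercive $G_i$ and mass contributions. Your sign bookkeeping (discarding the nonnegative bulk $\delta_0$-term and the boundary term at time $t$, bounding the $t=0$ boundary by $\mathcal{E}_1(0,u)$ via boundedness of $e^q$) is correct and matches the paper's conclusion.
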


\begin{proof}
	Proof of (i). The proof is similar to Lemma~\ref{le:enerE} (i), and we omit it.

Proof of (ii). Set 
\begin{equation*}
q(t,r)=\int_{-\infty}^{r-t}\langle s\rangle^{-\frac{3}{2}}\d s\quad \mbox{for}\ (t,r)\in \R_{+}\times \R_{+}.
\end{equation*}
We apply the multiplier $\langle t\rangle^{-\delta_{0}}e^{q}\partial_{t}u$ and obtain:
\begin{equation*}
\begin{aligned}
&\langle t\rangle^{-\delta_{0}}e^{q}\partial_{t}u\left(-\Box u+u\right)\\
&=\frac{1}{2}\partial_{t}\bigg(\langle t\rangle^{-\delta_{0}}e^{q}\bigg(\sum_{\alpha=0}^{3}(\partial_{\alpha} u)^{2}+u^{2}\bigg)\bigg)-\partial^{i}\left(\langle t\rangle^{-\delta_{0}}e^{q}\partial_{t}u\partial_{i}u\right)\\
&+\frac{1}{2}\frac{\langle t\rangle^{-\delta_{0}}e^{q}}{\langle t-r\rangle^{\frac{3}{2}}}\left((G_{1}u)^{2}+(G_{2}u)^{2}+u^{2}\right)+\frac{\delta_{0}}{2}t\langle t\rangle^{-(\delta_{0}+2)}e^{q}\bigg(\sum_{\alpha=0}^{3}(\partial_{\alpha} u)^{2}+u^{2}\bigg).
\end{aligned}
\end{equation*}
Integrating the above identity on $[0,t]\times \R^{2}$ and then using the Cauchy-Schwarz inequality, we obtain~\eqref{est:GhostGu}.
	\end{proof}

\section{Proof of Theorem~\ref{thm:main}}\label{sec:proof}
In this section, we prove the existence of global-in-time solution $(w,v)$ of~\eqref{equ:waveKG} satisfying~\eqref{est:thm} in Theorem~\ref{thm:main}. The proof relies on a bootstrap argument of high-order energy and pointwise decay of solutions.

\subsection{Bootstrap setting} 
Fix $0<\delta\ll 1$. The proof of Theorem~\ref{thm:main} is based on the following bootstrap setting: for $C_{0}\gg 1$ and $0<\varepsilon\ll C_{0}^{-1}$ to be chosen later
\begin{equation}\label{est:Bootw}
\left\{\begin{aligned}
\langle t\rangle^{\frac{1}{2}}\|w\|_{L_{x}^{\infty}}+\sum_{|I|\le N-1}\mathcal{E}(t,\Gamma^{I}w)^{\frac{1}{2}}+\sum_{|I|\le N}\langle t\rangle^{-\delta}\mathcal{E}(t,\Gamma^{I}w)^{\frac{1}{2}}&\le C_{0}\varepsilon,\\
\sum_{|I|\le N-1}\langle t\rangle^{-\frac{1}{2}-2\delta}\|S\Gamma^{I}w\|_{L_{x}^{2}}+\sum_{|I|\le N-6}\langle t\rangle^{-\delta}\|S\Gamma^{I}w\|_{L_{x}^{2}}&\le C_{0}\varepsilon ,\\
\sum_{|I|\le N}\langle t\rangle^{-\delta}\|\Gamma^{I}w\|_{L^{2}_{x}}+\sum_{|I|\le N-9}\sup_{x\in \R^{2}}\langle t-r\rangle^{\frac{3}{4}}\langle t\rangle^{\frac{1}{2}}\left|\partial \Gamma^{I}w(t,x)\right|&\le C_{0}\varepsilon,
\end{aligned}\right.
\end{equation}
\begin{equation}\label{est:Bootv}
\left\{\begin{aligned}
\sum_{|I|\le N-5}\sup_{x\in \R^{2}}\langle t+r\rangle \left|\Gamma^{I}v(t,x)\right|&\le C_{0}\varepsilon,\\
\sum_{|I|\le N-1}\mathcal{E}_{1}(t,\Gamma^{I}v)^{\frac{1}{2}}+\sum_{|I|\le N}\langle t\rangle^{-\delta}\mathcal{E}_{1}(t,\Gamma^{I}v)^{\frac{1}{2}}&\le C_{0}\varepsilon,\\
\sum_{|I|\le N}\langle t\rangle^{-\delta}\int_{0}^{t}\langle s \rangle^{-\delta}\int_{\R^{2}}\frac{\left(\Gamma^{I}v\right)^{2}}{\langle r-s\rangle^{\frac{3}{2}}}+\frac{(G_{i}\Gamma^{I}v)^{2}}{\langle r-s\rangle^{\frac{3}{2}}}\d x \d s&\le C_{0}\varepsilon.
\end{aligned}\right.
\end{equation}
Let $(\vec{w}_{0},\vec{v}_{0})$ satisfy~\eqref{est:Bootw} and~\eqref{est:Bootv} at $t=0$. Let $(w,v)$ be the  corresponding solution of~\eqref{equ:waveKG} and define
\begin{equation}\label{def:T}
T_{*}(\vec{w}_{0},\vec{v}_{0})=\sup\left\{t\in[0,\infty):(w,v)\ \mbox{satisfies}~\eqref{est:Bootw}\ \mbox{and}~\eqref{est:Bootv}\ \mbox{on}\ [0,t]\right\}.
\end{equation}
The following proposition is the main part of the proof of Theorem~\ref{thm:main}.
\begin{proposition}\label{prop:main}
	For all initial data $(\vec{w}_{0},\vec{v}_{0})$ satisfying~\eqref{est:small} in Theorem~\ref{thm:main}, we have $T_{*}(\vec{w}_{0},\vec{v}_{0})=\infty$.
\end{proposition}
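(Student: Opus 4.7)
The plan is a standard continuity/bootstrap argument. By local well-posedness and the smallness condition \eqref{est:small}, the bootstrap assumptions \eqref{est:Bootw}--\eqref{est:Bootv} hold on a nontrivial initial interval with constant $C_0\varepsilon/2$, so $T_*>0$. The objective is to show that on $[0,T_*]$ every $C_0\varepsilon$ in \eqref{est:Bootw}--\eqref{est:Bootv} can be improved to $C_0\varepsilon/2$, provided $C_0$ is sufficiently large and $\varepsilon\ll C_0^{-1}$; this improvement combined with the continuation criterion forces $T_*=\infty$. I commute each admissible vector field $\Gamma^I$ (and where necessary the scaling $S$) with the two equations using $[\Box,\Gamma]=[\Box-1,\Gamma]=0$ together with \eqref{est:parGamma}, and then improve the constants line by line in \eqref{est:Bootw}--\eqref{est:Bootv}.

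\textbf{Energy and ghost-weight estimates.}
For the Klein-Gordon component, \eqref{est:EnergK} applied to $\Gamma^I v$ and \eqref{est:GhostGu} applied with weight $\langle t\rangle^{-\delta}$ reduce matters to bounding $\Gamma^I Q(w,v)$ in $L^2_x$. The null-form pointwise bound \eqref{est:txQ0} turns this into a product of $\langle t+r\rangle^{-1}$ times a top-order energy factor and a lower-order pointwise factor. Combined with the bootstrap $L^\infty$ bounds on $v$ and on $w$ (the latter giving $\langle t\rangle^{-1/2}$) the integrand is integrable up to a $\langle t\rangle^\delta$ factor, recovering the lossless lower-order and the $\delta$-lossy top-order bounds on $\mathcal{E}_1(t,\Gamma^I v)^{1/2}$ and the $v$-ghost-weight quantity. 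The wave energies $\mathcal{E}(t,\Gamma^I w)^{1/2}$ are treated analogously via \eqref{est:EnerE}. The key structural step is the $L^2$-bound on $\Gamma^I w$ itself: for this I use Lemma~\ref{le:equQ0Qab} to rewrite the wave nonlinearity as $\partial^\alpha F_\alpha+\partial_\alpha H^\alpha+G$. The two divergence pieces are handled by integrating by parts in Duhamel and applying \eqref{est:L2waveinh} to primitives that are merely quadratic in $(w,v,\partial w,\partial v)$; the quartic remainder $G$ decays so fast, by the bootstrap pointwise bounds on $v$ and the null-form decay on $Q(w,v)$, that its $L^1_x+L^2_x$ norm is time-integrable. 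This is exactly the statement highlighted in the remark after Lemma~\ref{le:equQ0Qab}.

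\textbf{The scaling bound and pointwise decay.}
The principal obstacle is controlling $S\Gamma^I w$ in $L^2_x$, because $S$ is incompatible with the Klein-Gordon operator and cannot be commuted into the $v$-equation. I obtain it solely from the wave equation via the conformal energy estimate \eqref{est:ConE}. Its right-hand side demands $\int_0^t\|\langle s+r\rangle f(s,\cdot)\|_{L^2_x}\,ds$ where $f$ is the null-form source; the gain $\langle t+r\rangle^{-1}$ in \eqref{est:txQ0} exactly cancels the weight, reducing the integrand to a product of bootstrap-controlled norms, and yielding the $\langle t\rangle^{1/2+2\delta}$ and $\langle t\rangle^{\delta}$ growths demanded in \eqref{est:Bootw}. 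Since $|Su+u|\lesssim\mathcal{G}(t,u)^{1/2}$, combining with the previously established $L^2$-bound on $\Gamma^I w$ yields the $Sw$ bound. For pointwise decay: $|\Gamma^I v|\lesssim\varepsilon\langle t+r\rangle^{-1}$ follows from Corollary~\ref{coro:KGdecay}, whose hypothesis \eqref{est:assumf} is verified with $\delta_0\simeq 1/2$ using the bootstrap wave decay; $|w|\lesssim\varepsilon\langle t\rangle^{-1/2}$ follows from \eqref{est:SobolevGlobal}. The sharp bound on $\partial\Gamma^I w$ is the most delicate: in the exterior region $r>2t$ it follows from \eqref{est:decaypm} and the linear-wave decay \eqref{est:Linwave}; in the interior region $r\le 2t$, the hidden divergence structure from Lemma~\ref{le:equQ0Qab} allows one to write the main part of $\partial w$ as part of the Hessian of a wave, and Lemma~\ref{lem:Hessian} supplies the extra $\langle t-r\rangle^{-1}$ that upgrades $\langle t\rangle^{-1/2}$ to the target $\langle t-r\rangle^{-3/4}\langle t\rangle^{-1/2}$, the exponent $3/4$ coming from interpolating the $\langle t-r\rangle^{-1}$ Hessian gain against the $\langle t-r\rangle^{-1/2}$ accessible from direct pointwise bounds. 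Choosing $C_0$ large compared with all implicit constants and $\varepsilon$ small enough to absorb the nonlinear contributions closes the bootstrap and proves Proposition~\ref{prop:main}.
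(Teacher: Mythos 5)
Your overall architecture matches the paper's: a bootstrap on \eqref{est:Bootw}--\eqref{est:Bootv}, the decomposition $w=\Upsilon_0+\Upsilon_1+\partial^\alpha\Psi_\alpha+\partial_\alpha\Phi^\alpha$ via Lemma~\ref{le:equQ0Qab}, conformal energy for $S\Gamma^I w$, Georgiev's Sobolev inequality \eqref{est:SobolevGlobal} and Corollary~\ref{coro:KGdecay} for the pointwise decays, and Lemma~\ref{lem:Hessian} for the sharp $|\partial\Gamma^I w|$ bound inside the light cone. But there is a genuine gap in the most delicate step (the paper's Step~4), and a small inaccuracy in how you describe the exponent $3/4$.

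\medskip
\textbf{The gap.} To apply Lemma~\ref{lem:Hessian} to $\Psi_\alpha$ (which solves $-\Box\Psi_\alpha=F_\alpha$), the right-hand side of \eqref{est:extwaveHe} requires a \emph{lossless} pointwise bound $|\partial\Gamma^I\Psi_\alpha|\lesssim\langle t\rangle^{-1/2}$ at some fixed lower order. But the direct energy estimate on $\Psi_\alpha$ from Step~2 (or from \eqref{est:LtLxFH}) only gives $\mathcal E(t,\Gamma^I\Psi_\alpha)^{1/2}\lesssim C_0^{3/2}\varepsilon^{3/2}\langle t\rangle^\delta$: the $\int_0^t\|\Gamma^IF_\alpha\|_{L^2}\,ds$ has at best a $\log t$ or $\langle t\rangle^\delta$ growth because $\|v\,\partial w\|_{L^2_x}$ decays only like $\langle t\rangle^{-1}$. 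After \eqref{est:SobolevGlobal} and \eqref{est:extwaveHe}, this produces
\[
|\partial\partial\Gamma^I\Psi_\alpha|\lesssim\frac{\langle t\rangle^{-1/2+\delta}}{\langle t-r\rangle}+\frac{t}{\langle t-r\rangle}|F_\alpha|,
\]
and the first term fails to be $\lesssim\langle t\rangle^{-1/2}\langle t-r\rangle^{-3/4}$ near the light cone, where $\langle t-r\rangle^{1/4}\not\gtrsim\langle t\rangle^{\delta}$. The paper circumvents this by Tsutsumi's nonlinear normal-form transformation
\[
\widetilde\Psi_\alpha=\Psi_\alpha+C_1v\,\partial_\alpha w,
\]
for which $-\Box\widetilde\Psi_\alpha=\widetilde F_\alpha$ with $\widetilde F_\alpha$ effectively cubic; the associated $L^1_tL^2_x$ bound \eqref{est:extraL2} has \emph{no} $\langle t\rangle^\delta$ loss, so $\mathcal E(t,\Gamma^I\widetilde\Psi_\alpha)^{1/2}\lesssim\varepsilon+C_0^2\varepsilon^2$ uniformly in $t$, and after subtracting back the fast-decaying $\Gamma^I(v\,\partial_\alpha w)$ one gets the lossless $|\partial\Gamma^I\Psi_\alpha|\lesssim(\varepsilon+C_0^2\varepsilon^2)\langle t\rangle^{-1/2}$ of \eqref{est:pxPsi}. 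This transformation (and its analogue for $\Phi^\alpha$) is indispensable, and your proposal does not supply it or a substitute.

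\medskip
\textbf{A secondary inaccuracy.} You describe the exponent $3/4$ as coming from ``interpolating the $\langle t-r\rangle^{-1}$ Hessian gain against the $\langle t-r\rangle^{-1/2}$ accessible from direct pointwise bounds.'' That is not what happens. In the interior $r\le 2t$ the paper actually proves the \emph{full} $\langle t-r\rangle^{-1}$ in \eqref{est:Up0}, \eqref{est:Up1}, \eqref{est:Psi}, \eqref{est:Phi} (once the transformation removes the $\langle t\rangle^\delta$ loss) and merely relaxes it to $\langle t-r\rangle^{-3/4}$. The exponent $3/4$ is chosen to accommodate the \emph{exterior} case $r\ge 2t$, where the $L^2$-based Sobolev bound incurs a $\langle t\rangle^\delta$ factor that is absorbed by using $\langle t-r\rangle\gtrsim\langle t\rangle$, i.e. $\langle t\rangle^\delta\langle t-r\rangle^{-1}\lesssim\langle t-r\rangle^{-3/4}$, and to match the integrable weight $\langle t-r\rangle^{-3/2}$ in the ghost estimate \eqref{est:GhostGu}. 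No interpolation is involved, and without the normal-form transformation the interior bound would only be $\langle t\rangle^{-1/2+\delta}\langle t-r\rangle^{-1}$, which does not fit into $\langle t\rangle^{-1/2}\langle t-r\rangle^{-3/4}$.
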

Note that, combining the bootstrap setting~\eqref{est:Bootw} and~\eqref{est:Bootv}, Theorem~\ref{thm:main} is a consequence of Proposition~\ref{prop:main}. The rest of the section is devoted to the proof of Proposition~\ref{prop:main}. From now on, the implied constants in $\lesssim$ do not depend on the constants $C_{0}$ and $\varepsilon$ appearing in the bootstrap assumption~\eqref{est:Bootw} and~\eqref{est:Bootv}.

Note also that, from the estimate on commutator~\eqref{est:parGamma}, the global Sobolev inequality~\eqref{est:SobolevGlobal} and the bootstrap assumption~\eqref{est:Bootw}, for all $t\in [0, T_{*}(\vec{w}_{0},\vec{v}_{0}))$, we have the following $L^{\infty}$ estimates related to the wave component $w$,
\begin{equation}\label{est:pointwavew}
\begin{aligned}
\sum_{|I|\le N-3}\langle t\rangle^{-\delta}\left\|\Gamma^{I}w(t,x)\right\|_{L_{x}^{\infty}}+\sum_{|I|\le N-9}\langle t\rangle^{-\delta}\|S\Gamma^{I}w(t,x)\|_{L_{x}^{\infty}}&\lesssim C_{0}\varepsilon \langle t\rangle^{-\frac{1}{2}},\\
\sum_{|I|\le N-4}\|\partial \Gamma^{I}w(t,x)\|_{L_{x}^{\infty}}+\sum_{|I|\le N-4}\langle t\rangle^{-\frac{1}{2}-2\delta}\|S\Gamma^{I}w(t,x)\|_{L_{x}^{\infty}}&\lesssim C_{0}\varepsilon \langle t\rangle^{-\frac{1}{2}}.
\end{aligned}
\end{equation}

From~\eqref{est:Bootv} and the H\"older inequality, for all $t\in [0,T_{*}(\vec{w}_{0},\vec{v}_{0}))$, we also have 
\begin{equation}\label{est:LtLxuGiu}
\begin{aligned}
&\int_{0}^{t} \langle s\rangle^{-\frac{1}{2}}\left(\left\|\frac{\Gamma^{I}v}{\langle s-r \rangle^{\frac{3}{4}}}\right\|_{L_{x}^{2}}+\left\|\frac{G_{i}\Gamma^{I}v}{\langle s-r\rangle^{\frac{3}{4}}}\right\|_{L_{x}^{2}}\right)\d s \\
&\lesssim \left(\int_{0}^{t} \langle s\rangle^{-1+\delta}\right)^{\frac{1}{2}}
\left(\int_{0}^{t}\langle s\rangle^{-\delta}\left(\left\|\frac{\Gamma^{I}v}{\langle s-r \rangle^{\frac{3}{4}}}\right\|^{2}_{L_{x}^{2}}+\left\|\frac{G_{i}\Gamma^{I}v}{\langle s-r\rangle^{\frac{3}{4}}}\right\|^{2}_{L_{x}^{2}}\right)\d s\right)^{\frac{1}{2}}\\
&\lesssim \left(\langle t\rangle^{\delta}\right)^{\frac{1}{2}}\left(C_{0}\varepsilon\langle t\rangle^{\delta}\right)^{\frac{1}{2}}\lesssim C^{\frac{1}{2}}_{0}\varepsilon^{\frac{1}{2}}\langle t\rangle^{\delta},\quad \mbox{for}\ I\in\mathbb{N}^{6}\ \mbox{with}\ |I|\le N\ \mbox{and}\ i=1,2.
\end{aligned}
\end{equation}

\subsection{Estimates on the nonlinear terms}
In this subsection, we establish the estimates on the nonlinear terms that will be needed in the proof of Proposition~\ref{prop:main}.

We start with the technical estimates related to $\Gamma^{I}Q$.
\begin{lemma}\label{le:estQ0}
For all $t\in [0,T_{*}(\vec{w}_{0},\vec{v}_{0}))$, the following estimates hold. 
	\begin{enumerate}
		\item \emph{Weight $L_{x}^{2}$ estimates on $\Gamma^{I}Q$.} We have 
		\begin{equation}\label{est:L2Q0}
		\sum_{|I|\le N-1}\left\|\langle t+r\rangle \Gamma^{I}Q(w,v)\right\|_{L_{x}^{2}}\lesssim C_{0}^{2}\varepsilon^{2}\langle t\rangle^{-\frac{1}{2}+2\delta}.
		\end{equation}
		
		\item \emph{$L_{t}^{1}L_{x}^{2}$ estimates on $\Gamma^{I}Q$.} We have  
	\begin{align}
	\sum_{|I|\le N-1}\int_{0}^{t}\left\|\Gamma^{I} Q(w,v)\right\|_{L^{2}_{x}}\d s &\lesssim C_{0}^{2}\varepsilon^{2},\label{est:LtLx1Q01}\\
	\sum_{|I|\le N}\int_{0}^{t}\left\|\Gamma^{I} Q(w,v)\right\|_{L^{2}_{x}}\d s &\lesssim C_{0}^{\frac{3}{2}}\varepsilon^{\frac{3}{2}}\langle t\rangle^{\delta}\label{est:LtLx1Q02}.
	\end{align}
	
	\item \emph{Weighted $L_{t}^{1}L_{x}^{2}$ estimates on $\Gamma^{I}Q$.} We have 
	\begin{align}
	\sum_{|I|\le N-6}\int_{0}^{t}\left\|\langle s+r\rangle\Gamma^{I}Q(w,v)\right\|_{L^{2}_{x}}\d s&\lesssim C_{0}^{2}\varepsilon^{2}\langle t\rangle^{\delta},\label{est:LtLxwQ01}\\
	\sum_{|I|\le N-1}\int_{0}^{t}\left\|\langle s+r\rangle\Gamma^{I}Q(w,v)\right\|_{L^{2}_{x}}\d s&\lesssim C_{0}^{2}\varepsilon^{2}\langle t \rangle^{\frac{1}{2}+2\delta}.\label{est:LtLxwQ02}
	\end{align}

\end{enumerate}
\end{lemma}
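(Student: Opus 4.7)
The plan is to reduce all five estimates to two pointwise null-form inequalities---\eqref{est:txQ0} for the generic cases and the good-derivative form \eqref{est:Q0decayG} for the top-order variant \eqref{est:LtLx1Q02}---and then to perform a Leibniz split $|I_1|+|I_2|\le |I|$ into a \emph{low-Klein-Gordon} regime (where $|I_2|$ is small enough that the first line of \eqref{est:Bootv} yields a pointwise bound on the $v$-factor) and a \emph{low-wave} regime (where $|I_1|$ is small enough that \eqref{est:pointwavew} yields a pointwise bound on the $w$-factor). Since $N\ge 14$, the constraint $|I_1|+|I_2|\le N-1$ coupled with $|I_2|\ge N-5$ forces $|I_1|\le 4\le N-9$, so at least one regime is always available.

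For part (i), I would multiply \eqref{est:txQ0} by $\langle t+r\rangle$, cancelling the gain factor and leaving a bilinear expression. In the low-KG regime, the bound $|\Gamma^{J_2}\Gamma^{I_2}v|\lesssim C_0\varepsilon\langle t\rangle^{-1}$ (valid since $|J_2I_2|\le N-5$) combines with $\|S\Gamma^{I_1}w\|_{L^2_x}\lesssim C_0\varepsilon\langle t\rangle^{1/2+2\delta}$ from \eqref{est:Bootw} to produce $C_0^2\varepsilon^2\langle t\rangle^{-1/2+2\delta}$; the $\Gamma^{J_1}\Gamma^{I_1}w$ contribution is smaller. In the low-wave regime with $|I_1|\le 4\le N-9$, the pointwise bounds in \eqref{est:pointwavew} supply $C_0\varepsilon\langle t\rangle^{-1/2+\delta}$ for both $S\Gamma^{I_1}w$ and $\Gamma^{J_1}\Gamma^{I_1}w$, and this is paired with $\|\Gamma^{J_2}\Gamma^{I_2}v\|_{L^2_x}\lesssim C_0\varepsilon\langle t\rangle^\delta$ from the energy line of \eqref{est:Bootv}, again matching \eqref{est:L2Q0}.

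Parts (ii) and (iii) in the $|I|\le N-1$ regime follow from part (i) by writing $\|\Gamma^IQ\|_{L^2_x}\le\langle s\rangle^{-1}\|\langle s+r\rangle\Gamma^IQ\|_{L^2_x}$ (since $\langle s+r\rangle\ge\langle s\rangle$) and integrating in $s$: the integrands are $\langle s\rangle^{-3/2+2\delta}$ for the unweighted version and $\langle s\rangle^{-1/2+2\delta}$ for the weighted one, integrating to $O(1)$ and $O(\langle t\rangle^{1/2+2\delta})$, exactly as claimed in \eqref{est:LtLx1Q01} and \eqref{est:LtLxwQ02}. For the lower-regularity weighted bound \eqref{est:LtLxwQ01} with $|I|\le N-6$, the same argument uses the improved $L^2$-bound $\|S\Gamma^{I_1}w\|_{L^2_x}\lesssim C_0\varepsilon\langle t\rangle^\delta$ from \eqref{est:Bootw} (valid for $|I_1|\le N-6$), which removes the $\langle t\rangle^{1/2}$ factor and delivers the target $C_0^2\varepsilon^2\langle t\rangle^\delta$.

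The top-order bound \eqref{est:LtLx1Q02} with $|I|\le N$ is the principal obstacle, because a naive use of \eqref{est:txQ0} would push the derivative order of one factor up to $N+1$, outside the bootstrap. I would instead switch to \eqref{est:Q0decayG}, which keeps the derivative orders at $|I_1|+1$ on $w$ and $|I_2|+1$ on $v$; the standard energies $\mathcal{E}^{1/2}$ and $\mathcal{E}_1^{1/2}$ then control $\|\partial\Gamma^{I_j}w\|_{L^2_x}$ and $\|\partial\Gamma^{I_j}v\|_{L^2_x}$ for $|I_j|\le N$. The only delicate term is the cross-product $|G_i\Gamma^{I_2}v||\partial\Gamma^{I_1}w|$ with $|I_2|$ close to $N$ and $|I_1|\le N-9$, which I would estimate via
\[
\bigl\|\langle s-r\rangle^{3/4}\partial\Gamma^{I_1}w\bigr\|_{L^\infty_x}\bigl\|G_i\Gamma^{I_2}v/\langle s-r\rangle^{3/4}\bigr\|_{L^2_x}\lesssim C_0\varepsilon\langle s\rangle^{-1/2}\bigl\|G_i\Gamma^{I_2}v/\langle s-r\rangle^{3/4}\bigr\|_{L^2_x},
\]
using the weighted pointwise bound on $\partial\Gamma^{I_1}w$ from \eqref{est:Bootw}. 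Integrating in $s$ and invoking the preassembled inequality \eqref{est:LtLxuGiu} then yields the claimed $C_0^{3/2}\varepsilon^{3/2}\langle t\rangle^\delta$. The hardest point is precisely this final step: the $G_i$-form \eqref{est:Q0decayG} of the null condition is what unlocks the square-integrable ghost-weight quantity built into \eqref{est:Bootv}, and \eqref{est:LtLxuGiu} is arranged so that the Cauchy-Schwarz loss is exactly $\langle t\rangle^\delta$---the rate appearing on the right-hand side of \eqref{est:LtLx1Q02}, leaving no slack.
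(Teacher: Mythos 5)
Your proposal is correct and matches the paper's proof in essentially every respect: the Leibniz dichotomy via \eqref{est:txQ0} into low-wave versus low-Klein-Gordon regimes for part (i), integration of (i) for the $|I|\le N-1$ cases of (ii)--(iii), the improved $\|S\Gamma^{I}w\|_{L^2_x}$ bound from \eqref{est:Bootw} for \eqref{est:LtLxwQ01}, and the switch to the good-derivative form \eqref{est:Q0decayG} together with the ghost-weight quantity through \eqref{est:LtLxuGiu} for the top-order estimate \eqref{est:LtLx1Q02}. You correctly isolate the cross-term $|\partial\Gamma^{I_1}w|\,|G_i\Gamma^{I_2}v|$ (the paper's $\mathcal{J}_{24}$) as the one piece that truly needs the ghost weight --- since $G_i$ on $v$ cannot be traded for $\langle t\rangle^{-1}S$ in the Klein-Gordon setting --- and your $\langle s-r\rangle^{3/4}$-weighted Cauchy--Schwarz treatment of it is exactly the paper's.
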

\begin{proof}
	Proof of (i). From~\eqref{est:txQ0} and $N\ge 14$, we see that
	\begin{equation*}
	\sum_{|I|\le N-1}\left\|\langle t+r\rangle \Gamma^{I}Q(w,v)\right\|_{L_{x}^{2}}\lesssim \mathcal{J}_{11}(t)+\mathcal{J}_{12}(t)+\mathcal{J}_{13}(t)+\mathcal{J}_{14}(t),
	\end{equation*}
	where
		\begin{equation*}
	\mathcal{J}_{11}=\sum_{|I_{1}|\le N-3 }\sum_{|I_{2}|\le N}\left\|\Gamma^{I_{1}}w\Gamma^{I_{2}}v\right\|_{L^{2}_{x}},\ 
	\mathcal{J}_{12}=\sum_{|I_{1}|\le N}\sum_{|I_{2}|\le N-5}\left\|\Gamma^{I_{1}}w\Gamma^{I_{2}}v\right\|_{L^{2}_{x}},
	\end{equation*}
	\begin{equation*}
	\mathcal{J}_{13}=\sum_{|I_{1}|\le N-9 }\sum_{|I_{2}|\le N}\left\|S\Gamma^{I_{1}}w\Gamma^{I_{2}}v\right\|_{L^{2}_{x}},\
	\mathcal{J}_{14}=\sum_{|I_{1}|\le N-1}\sum_{|I_{2}|\le N-5}\left\|S\Gamma^{I_{1}}w\Gamma^{I_{2}}v\right\|_{L^{2}_{x}}.
	\end{equation*}
	Using~\eqref{est:Bootw},~\eqref{est:Bootv} and~\eqref{est:pointwavew}, we have
	\begin{equation*}
	\mathcal{J}_{11}(t)\lesssim \sum_{|I_{1}|\le N-3 }\sum_{|I_{2}|\le N}\left\|\Gamma^{I_{1}}w\right\|_{L^{\infty}_{x}}\left\|\Gamma^{I_{2}}v\right\|_{L^{2}_{x}}
	\lesssim C_{0}^{2}\varepsilon^{2}t^{-\frac{1}{2}+2\delta},
	\end{equation*}
	\begin{equation*}
	\mathcal{J}_{12}(t)\lesssim \sum_{|I_{1}|\le N}\sum_{|I_{2}|\le N-5} \left\|\Gamma^{I_{1}}w\right\|_{L^{2}_{x}}\left\|\Gamma^{I_{2}}v\right\|_{L^{\infty}_{x}}
	\lesssim C_{0}^{2}\varepsilon^{2}t^{-1+\delta}, \
	\end{equation*}
	\begin{equation*}
	\mathcal{J}_{13}(t)\lesssim \sum_{|I_{1}|\le N-9 }\sum_{|I_{2}|\le N} \left\|S\Gamma^{I_{1}}w\right\|_{L_{x}^{\infty}}\left\|\Gamma^{I_{2}}v\right\|_{L^{2}_{x}}\lesssim
	C_{0}^{2}\varepsilon^{2}t^{-\frac{1}{2}+2\delta}, \
	\end{equation*}
	\begin{equation*}
\ \ \ \ 	\mathcal{J}_{14}(t)\lesssim \sum_{|I_{1}|\le N-1}\sum_{|I_{2}|\le N-5}\left\|S\Gamma^{I_{1}}w\right\|_{L_{x}^{2}}\left\|\Gamma^{I_{2}}v\right\|_{L^{\infty}_{x}}\lesssim
	C_{0}^{2}\varepsilon^{2}t^{-\frac{1}{2}+2\delta}. 
	\end{equation*}
	Combining the above estimates, we obtain~\eqref{est:L2Q0}.
	
	Proof of (ii). First, from~\eqref{est:L2Q0} and $0<\delta \ll 1$, we have 
	\begin{equation*}
	\sum_{|I|\le N-1}\left\|\Gamma^{I}Q(w,v)\right\|_{L^{2}_{x}}\lesssim \langle t\rangle^{-1}	\sum_{|I|\le N-1}\left\|\langle t+r\rangle \Gamma^{I}Q(w,v)\right\|_{L_{x}^{2}}\lesssim C_{0}^{2}\varepsilon^{2}\langle t\rangle^{-\frac{5}{4}},
	\end{equation*}
	which implies~\eqref{est:LtLx1Q01}. Second, from~\eqref{est:Q0decayG}, \eqref{equ:GammaQ0} and $N\ge 14$, we infer
	\begin{equation*}
	\sum_{|I|\le N}\int_{0}^{t}\left\|\Gamma^{I}Q(w,v)\right\|_{L_{x}^{2}}\d s\lesssim \mathcal{J}_{21}(t)+\mathcal{J}_{22}(t)+\mathcal{J}_{23}(t)+\mathcal{J}_{24}(t),
	\end{equation*}
	where
	\begin{align*}
	\mathcal{J}_{21}(t)&=\sum_{i=1,2}\sum_{\substack{|I_{1}|\le N-9\\ |I_{1}|+|I_{2}|\le N}}\int_{0}^{t}\left\|\left(G_{i}\Gamma^{I_{1}}w\right)\left(\partial\Gamma^{I_{2}} v\right)\right\|_{L_{x}^{2}}\d s,\\
	\mathcal{J}_{22}(t)&=\sum_{i=1,2}\sum_{\substack{|I_{2}|\le N-6\\ |I_{1}|+|I_{2}|\le N}}\int_{0}^{t}\left\|\left(G_{i}\Gamma^{I_{1}}w\right)\left(\partial\Gamma^{I_{2}} v\right)\right\|_{L_{x}^{2}}\d s,
	\end{align*}
	\begin{align*}
	\mathcal{J}_{23}(t)&=\sum_{i=1,2}\sum_{\substack{|I_{2}|\le N-6\\ |I_{1}|+|I_{2}|\le N}}\int_{0}^{t}\left\|\left(\partial\Gamma^{I_{1}} w\right)\left(G_{i}\Gamma^{I_{2}}v\right)\right\|_{L_{x}^{2}}\d s,\\
	\mathcal{J}_{24}(t)&=\sum_{i=1,2}\sum_{\substack{|I_{1}|\le N-9\\ |I_{1}|+|I_{2}|\le N}}\int_{0}^{t}\left\|\left(\partial\Gamma^{I_{1}} w\right)\left(G_{i}\Gamma^{I_{2}}v\right)\right\|_{L_{x}^{2}}\d s.
	\end{align*}
	Using~\eqref{est:decaypm},~\eqref{est:Bootv},~\eqref{est:pointwavew} and $0<\delta\ll1$, we have
	\begin{align*}
	\mathcal{J}_{21}
	&\lesssim \sum_{\substack{|I_{1}|\le N-9\\ |I_{1}|+|I_{2}|\le N}}\int_{0}^{t}\langle s\rangle^{-1}\left\|S\Gamma^{I_{1}}w\right\|_{L_{x}^{\infty}}\left\|\partial \Gamma^{I_{2}}v\right\|_{L_{x}^{2}}\d s\\
	&+\sum_{\substack{1\le |I_{1}|\le N-8\\ |I_{1}|+|I_{2}|\le N+1}}\int_{0}^{t}\langle s\rangle^{-1}\left\|\Gamma^{I_{1}}w\right\|_{L^{\infty}_{x}}\left\|\partial \Gamma^{I_{2}}v\right\|_{L_{x}^{2}}\d s\lesssim C_{0}^{2}\varepsilon^{2}\int_{0}^{t}\langle s\rangle^{-\frac{5}{4}}\d s\lesssim C_{0}^{2}\varepsilon^{2}.
	\end{align*}
	Then, using again~\eqref{est:Bootw},~\eqref{est:Bootv},~\eqref{est:pointwavew} and $|G_{i}|\lesssim |\partial|$, we infer
	\begin{align*}
	\mathcal{J}_{22}
	&\lesssim \sum_{\substack{1\le |I_{2}|\le N-5\\ |I_{1}|+|I_{2}|\le N+1}}
	\int_{0}^{t}\left\|\partial \Gamma^{I_{1}}w\right\|_{L^{2}_{x}}\left\|\Gamma^{I_{2}}v\right\|_{L_{x}^{\infty}}\d s\lesssim C_{0}^{2}\varepsilon^{2}\int_{0}^{t}\langle s \rangle^{-1+\delta}\d s \lesssim C_{0}^{2}\varepsilon^{2}\langle t\rangle^{\delta},\\
	\mathcal{J}_{23}
	&\lesssim \sum_{\substack{1\le |I_{2}|\le N-5\\ |I_{1}|+|I_{2}|\le N+1}}
	\int_{0}^{t}\left\|\partial \Gamma^{I_{1}}w\right\|_{L^{2}_{x}}\left\|\Gamma^{I_{2}}v\right\|_{L_{x}^{\infty}}\d s\lesssim C_{0}^{2}\varepsilon^{2}\int_{0}^{t}\langle s \rangle^{-1+\delta}\d s \lesssim C_{0}^{2}\varepsilon^{2}\langle t\rangle^{\delta}.
	\end{align*}
	Next, from~\eqref{est:Bootw} and~\eqref{est:LtLxuGiu}, we see that
	\begin{equation*}
	\begin{aligned}
	\mathcal{J}_{24}
	&\lesssim \sum_{i=1,2}\sum_{\substack{|I_{1}|\le N-9\\ |I_{1}|+|I_{2}|\le N}}\int_{0}^{t}\left\|\langle s-r\rangle^{\frac{3}{4}}\left(\partial\Gamma^{I_{1}} w\right)\right\|_{L_{x}^{\infty}}\left\|\frac{G_{i}\Gamma^{I_{2}}v}{\langle s-r \rangle^{\frac{3}{4}}}\right\|_{L_{x}^{2}}\d s\\
	&\lesssim C_{0}\varepsilon\sum_{i=1,2}\int_{0}^{t}\langle s\rangle^{-\frac{1}{2}}\left\|\frac{G_{i}\Gamma^{I_{2}}v}{\langle s-r \rangle^{\frac{3}{4}}}\right\|_{L^{2}_{x}}\d s\lesssim C^{\frac{3}{2}}_{0}\varepsilon^{\frac{3}{2}}\langle t\rangle^{\delta}.
	\end{aligned}
	\end{equation*}
	Combining the above estimates, we obtain~\eqref{est:LtLx1Q02}.
	
	Proof of (iii). First, from~\eqref{est:txQ0}, we have 
	\begin{equation*}
	\sum_{|I|\le N-6}\int_{0}^{t}\left\|\langle s+r\rangle \Gamma^{I}Q(w,v)\right\|_{L^{2}_{x}}\lesssim \mathcal{J}_{31}(t)+\mathcal{J}_{32}(t),
	\end{equation*}
	where
	\begin{align*}
	\mathcal{J}_{31}(t)&=\sum_{|I_{1}|\le N-5}\sum_{|I_{2}|\le N-5}\int_{0}^{t}\left\|\Gamma^{I_{1}}w\right\|_{L_{x}^{2}}\left\|\Gamma^{I_{2}}v\right\|_{L_{x}^{\infty}}\d s,\\
	\mathcal{J}_{32}(t)&=\sum_{|I_{1}|\le N-6}\sum_{|I_{2}|\le N-5}\int_{0}^{t}\left\|S\Gamma^{I_{1}}w\right\|_{L_{x}^{2}}\left\|\Gamma^{I_{2}}v\right\|_{L_{x}^{\infty}}\d s.
	\end{align*}
	Note that, using~\eqref{est:Bootw} and~\eqref{est:Bootv}, we obtain 
	\begin{equation*}
	\mathcal{J}_{31}(t)+\mathcal{J}_{32}(t)\lesssim C_{0}^{2}\varepsilon^{2}\int_{0}^{t}\langle s \rangle^{-1+\delta}\d s 
	\lesssim C_{0}^{2}\varepsilon^{2}\langle t\rangle^{\delta},
	\end{equation*}
	which implies~\eqref{est:LtLxwQ01}. Last, integrating~\eqref{est:L2Q0} on $[0,t]$, we obtain~\eqref{est:LtLxwQ02}.
	\end{proof}

Second, we introduce the $L_{t}^{1}L_{x}^{2}$ for the nonlinear terms that are related to hidden divergence type terms $F_{\alpha}$ and $H^{\alpha}$.
\begin{lemma}
	For all $t\in [0,T_{*}(\vec{w}_{0},\vec{v}_{0}))$, the following estimates hold.
	\begin{enumerate}
	\item \emph{$L_{t}^{1}L_{x}^{2}$ estimate for $\Gamma^{I}F_{\alpha}$ and $\Gamma^{I}H^{\alpha}$.} We have 
	\begin{equation}\label{est:LtLxFH}
	\sum_{|I|\le N}\int_{0}^{t}\left(\left\|\Gamma^{I}F_{\alpha}(w,v)\right\|_{L_{x}^{2}}+\left\|\Gamma^{I}H^{\alpha}(w,v)\right\|_{L_{x}^{2}}\right)\d s\lesssim C_{0}^{\frac{3}{2}}\varepsilon^{\frac{3}{2}}\langle t\rangle^{\delta}.
	\end{equation}
	
	\item \emph{$L_{t}^{1}L_{x}^{2}$ estimate for extra nonlinear terms.} We have 
	\begin{equation}\label{est:extraL2}
	\begin{aligned}
	\sum_{|I|\le N-5}\int_{0}^{t}\left(\left\|\Gamma^{I}Q(\partial w,v)\right\|_{L_{x}^{2}}+\left\|\Gamma^{I}(v^{2}\partial w)\right\|_{L_{x}^{2}}\right)\d s&\lesssim C_{0}^{2}\varepsilon^{2},\\
	\sum_{|I|\le N-5}\int_{0}^{t}\left(\left\|\Gamma^{I}\left(v\partial Q(w,v)\right)\right\|_{L_{x}^{2}}+\left\|\Gamma^{I}\left(Q(w,v)\partial w\right)\right\|_{L_{x}^{2}}\right)\d s&\lesssim C_{0}^{2}\varepsilon^{2}.
	\end{aligned}
	\end{equation}
\end{enumerate}
\end{lemma}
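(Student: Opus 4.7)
The plan is to expand $\Gamma^{I}$ across each product via the Leibniz rule for vector fields (using~\eqref{est:parGamma} to absorb commutator remainders), and then to distribute the factors in $L_x^\infty$--$L_x^2$ pairings, always placing the side with fewer derivatives in $L^\infty_x$. All coefficients $C_1^k$ and $C_1^{\alpha\beta}$ are harmless, so both $F_\alpha$ and $H^\alpha$ reduce to the prototype terms $v^k\partial_\beta w$ with $k=1,2$; the four terms in~(ii) are each at least trilinear and contain either a factor already controlled by Lemma~\ref{le:estQ0} or a factor with strong pointwise decay.

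For part~(i), expand $\Gamma^I(v\partial_\beta w)=\sum_{|I_1|+|I_2|\le|I|}\Gamma^{I_1}v\,\Gamma^{I_2}\partial_\beta w$ and split into two index regimes. If $|I_1|\le N-5$, then~\eqref{est:Bootv} yields $\|\Gamma^{I_1}v\|_{L^\infty_x}\lesssim C_0\varepsilon\langle s\rangle^{-1}$, while~\eqref{est:Bootw} gives $\|\partial\Gamma^{I_2}w\|_{L^2_x}\lesssim C_0\varepsilon\langle s\rangle^\delta$, so the time integral is $\lesssim C_0^2\varepsilon^2\langle t\rangle^\delta$. If instead $|I_1|\ge N-4$, and hence $|I_2|\le 4\le N-9$, the sharp pointwise bound $|\partial\Gamma^{I_2}w|\lesssim C_0\varepsilon\langle s\rangle^{-1/2}\langle s-r\rangle^{-3/4}$ from~\eqref{est:Bootw} pairs with the ghost-weight norm of $v$ via Cauchy--Schwarz in time exactly as in~\eqref{est:LtLxuGiu}, producing the advertised $C_0^{3/2}\varepsilon^{3/2}\langle t\rangle^\delta$. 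For the cubic piece $v^2\partial w$, at least two of the three factors carry few derivatives, so the worst split places one $\Gamma^{I_j}v$ in $L^\infty_x$ with $\langle s\rangle^{-1}$-decay and the rest proceeds as above with an extra $\langle s\rangle^{-1}$ in the integrand.

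For part~(ii), all terms are trilinear and the tighter budget $|I|\le N-5$ gives more room: any factor with $\le N-9$ derivatives goes in $L^\infty_x$ via~\eqref{est:pointwavew} or~\eqref{est:Bootv}, and the remaining factor is handled by the $L^2$ energy bounds. For $\Gamma^I Q(\partial w,v)$ I first apply~\eqref{equ:GammaQ0} and~\eqref{est:Q0decayG} to reveal the good-derivative structure on $\partial w$, then repeat the computation of Lemma~\ref{le:estQ0}(ii) at one derivative higher, still below the top order. For $v\partial Q(w,v)$ and $Q(w,v)\partial w$, the null-form factor is bounded via Lemma~\ref{le:estQ0}, and the accompanying $v$ or $\partial w$ is placed in $L^\infty_x$; the term $v^2\partial w$ simply uses $\|v\|_{L^\infty_x}^2\lesssim C_0^2\varepsilon^2\langle s\rangle^{-2}$, giving an absolutely convergent time integral. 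All four bounds in~(ii) close at $C_0^2\varepsilon^2$ without invoking the ghost weight.

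The principal obstacle is the split in part~(i) where essentially all derivatives fall on $v$. The only way to absorb the top-order factor $\|\Gamma^{I_1}v\|_{L^2_x}$ and still reach a convergent time integral is to bring in the $\langle s-r\rangle^{-3/4}$ pointwise decay of $\partial w$, itself available only because of the hidden divergence structure of $Q_0$ exposed in Lemma~\ref{le:equQ0Qab}. Pairing this decay with the ghost-weight control of $v$ is precisely what Cauchy--Schwarz in the form of~\eqref{est:LtLxuGiu} is designed for, and it forces the slightly weaker rate $C_0^{3/2}\varepsilon^{3/2}$ (rather than $C_0^2\varepsilon^2$) on the right-hand side of~\eqref{est:LtLxFH}.
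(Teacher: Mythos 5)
Your proof is correct and takes essentially the same route as the paper's own argument: for part (i) you reproduce the paper's case split (their $\mathcal{J}_{41}$--$\mathcal{J}_{44}$) with $L^\infty$--$L^2$ pairings and, in the regime where nearly all vector fields land on $v$, the pairing of the $\langle s-r\rangle^{-3/4}$ pointwise decay of $\partial w$ with the ghost-weight control of $v$ via Cauchy--Schwarz as in~\eqref{est:LtLxuGiu}, which is exactly how the paper obtains the $C_0^{3/2}\varepsilon^{3/2}\langle t\rangle^\delta$ rate; for part (ii) the paper simply says the proof is similar to Lemma~\ref{le:estQ0}, and your reduction to the null-form estimates at order $\le N-4< N-1$ is the natural way to carry that out. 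One small inaccuracy: you assert that ``all terms in (ii) are trilinear,'' but $Q(\partial w,v)$ is bilinear; this does not affect your argument, since you correctly explain that it is handled by the null structure (the analogue of~\eqref{est:LtLx1Q01}) rather than by trilinear smallness, so the conclusion stands.
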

\begin{proof}
	Proof of (i).
	For all $\alpha=0,1,2$, we claim that
		\begin{equation}\label{est:LtLxvw}
	\sum_{|I|\le N}\int_{0}^{t}\left(\left\|\Gamma^{I}(v\partial_{\alpha}w)\right\|_{L_{x}^{2}}+\left\|\Gamma^{I}(v^{2}\partial_{\alpha}w)\right\|_{L_{x}^{2}}\right)\d s\lesssim C_{0}^{\frac{3}{2}}\varepsilon^{\frac{3}{2}}\langle t\rangle^{\delta}.
	\end{equation}
	Indeed, from $N\ge 14$, we decompose
	\begin{equation*}
\sum_{|I|\le N}\int_{0}^{t}\left(\left\|\Gamma^{I}(v\partial_{\alpha}w)\right\|_{L_{x}^{2}}+\left\|\Gamma^{I}(v^{2}\partial_{\alpha}w)\right\|_{L_{x}^{2}}\right)\d s\lesssim \mathcal{J}_{41}+\mathcal{J}_{42}+\mathcal{J}_{43}+\mathcal{J}_{44},
	\end{equation*}
	where
	\begin{equation*}
	\begin{aligned}
	\mathcal{J}_{41}&=\sum_{|I_{2}|\le N}\sum_{|I_{1}|\le N-5 }\int_{0}^{t}\left\|\left(\Gamma^{I_{1}}v\right)\left(\Gamma^{I_{2}}\partial_{\alpha}w\right)\right\|_{L_{x}^{2}}\d s,\\
	\mathcal{J}_{42}&=\sum_{|I_{1}|\le N}\sum_{|I_{2}|\le N-9}\int_{0}^{t}\left\|\left(\Gamma^{I_{1}}v\right)\left(\Gamma^{I_{2}}\partial_{\alpha}w\right)\right\|_{L_{x}^{2}}\d s, \quad \quad \quad \quad  \
	\end{aligned}
	\end{equation*}
		\begin{equation*}
	\begin{aligned}
	\mathcal{J}_{43}&=\sum_{\substack{|I_{1}|+|I_{2}|\le N-5\\ |I_{1}|+|I_{2}|+|I_{3}|\le N}}\int_{0}^{t}\left\|\left(\Gamma^{I_{1}}v\right)\left(\Gamma^{I_{2}}v\right)\left(\Gamma^{I_{3}}\partial_{\alpha}w\right)\right\|_{L_{x}^{2}}\d s,\\
	\mathcal{J}_{44}&=\sum_{\substack{|I_{1}|\le N-5,\ |I_{3}|\le N-5\\ |I_{1}|+|I_{2}|+|I_{3}|\le N}}\int_{0}^{t}\left\|\left(\Gamma^{I_{1}}v\right)\left(\Gamma^{I_{2}}v\right)\left(\Gamma^{I_{3}}\partial_{\alpha}w\right)\right\|_{L_{x}^{2}}\d s. 
	\end{aligned}
	\end{equation*}
	Then, using again~\eqref{est:parGamma},~\eqref{est:Bootw},~\eqref{est:Bootv},~\eqref{est:pointwavew} and~\eqref{est:LtLxuGiu}, we have 
	\begin{equation*}
	\mathcal{J}_{41}\lesssim \sum_{\substack{|I_{1}|\le N-5\\ |I_{1}|+|I_{2}|\le N}}\int_{0}^{t}\left\|\Gamma^{I_{1}}v\right\|_{L_{x}^{\infty}}\left\|\partial\Gamma^{I_{2}}w\right\|_{L_{x}^{2}}\d s\lesssim C_{0}^{2}\varepsilon^{2}\int_{0}^{t}\langle s\rangle^{-1+\delta}\d s\lesssim C_{0}^{2}\varepsilon^{2}\langle t\rangle^{\delta},
	\end{equation*}
	\begin{equation*}
	\begin{aligned}
	\mathcal{J}_{42}
	&\lesssim \sum_{\substack{|I_{2}|\le N-9\\ |I_{1}|+|I_{2}|\le N}}\int_{0}^{t}\left\|\frac{\Gamma^{I_{1}}v}{\langle s-r\rangle^{\frac{3}{4}}}\right\|_{L_{x}^{2}}\left\|\langle s-r\rangle^{\frac{3}{4}}\partial\Gamma^{I_{2}}w\right\|_{L_{x}^{\infty}}\d s \\
	&\lesssim C_{0}\varepsilon\int_{0}^{t}\langle s\rangle^{-\frac{1}{2}}\left\|\frac{\Gamma^{I_{1}}v}{\langle s-r\rangle^{\frac{3}{4}}}\right\|_{L_{x}^{2}}\d s \lesssim C_{0}^{\frac{3}{2}}\varepsilon^{\frac{3}{2}}\langle t\rangle^{\delta},
	\end{aligned}
	\end{equation*}
	\begin{equation*}
	\begin{aligned}
	\mathcal{J}_{43}
	&\lesssim\sum_{\substack{|I_{1}|+|I_{2}|\le N-5\\ |I_{1}|+|I_{2}|+|I_{3}|\le N}}\int_{0}^{t}\left\|\Gamma^{I_{1}}v\right\|_{L_{x}^{\infty}}\left\|\Gamma^{I_{2}}v\right\|_{L_{x}^{\infty}}\left\|\partial\Gamma^{I_{3}}w\right\|_{L_{x}^{2}}\d s\\
	&\lesssim C_{0}^{3}\varepsilon^{3}\int_{0}^{t}\langle s\rangle^{-1} \langle s\rangle^{-1} \langle s\rangle^{\delta}\d s \lesssim C_{0}^{3}\varepsilon^{3}\int_{0}^{t}\langle s\rangle^{-2+\delta}\d s 
	\lesssim C_{0}^{3}\varepsilon^{3},
	\end{aligned}
	\end{equation*}
	\begin{equation*}
	\begin{aligned}
	\mathcal{J}_{44}
	&\lesssim \sum_{\substack{|I_{1}|\le N-5,\ |I_{3}|\le N-5\\ |I_{1}|+|I_{2}|+|I_{3}|\le N}}\int_{0}^{t}\left\|\Gamma^{I_{1}}v\right\|_{L_{x}^{\infty}}\left\|\Gamma^{I_{2}}v\right\|_{L_{x}^{2}}\left\|\partial\Gamma^{I_{3}}w\right\|_{L_{x}^{\infty}}\d s\\
	&\lesssim C_{0}^{3}\varepsilon^{3}\int_{0}^{t}\langle s\rangle^{-1} \langle s\rangle^{\delta} \langle s\rangle^{-\frac{1}{2}}\d s \lesssim C_{0}^{3}\varepsilon^{3}\int_{0}^{t}\langle s\rangle^{-\frac{3}{2}+\delta}\d s 
	\lesssim C_{0}^{3}\varepsilon^{3}.
	\end{aligned}
	\end{equation*}
	Combining the above estimates, we obtain~\eqref{est:LtLxvw}. Last, we see that~\eqref{est:LtLxFH} follows from~\eqref{est:LtLxvw} and the definition of $F_{\alpha}$ and $H^{\alpha}$ in Lemma~\ref{le:equQ0Qab}.
	
	Proof of (ii). The proof is similar to~Lemma~\ref{le:estQ0}, and we omit it.
\end{proof}
Last, we introduce the following technical estimates related to quartic term $\Gamma^{I}G$.

\begin{lemma}For all $t\in [0,T_{*}(\vec{w}_{0},\vec{v}_{0}))$, the following estimates hold.
	\begin{enumerate}
		\item \emph{$L_{t}^{1}L_{x}^{1}$ estimate on $\Gamma^{I}G$.} We have 
		\begin{equation}\label{est:GBLtx1}
		\sum_{|I|\le N}\int_{0}^{t}\left\|\Gamma^{I}G(w,v)\right\|_{L^{1}_{x}}\d s\lesssim C_{0}^{4}\varepsilon^{4}.
		\end{equation}
		
		\item \emph{$L_{t}^{1}L_{x}^{2}$ estimate on $\Gamma^{I}G$.} We have  
			\begin{equation}\label{est:GBLtx2}
		\sum_{|I|\le N}\int_{0}^{t}\left\|\Gamma^{I}G(w,v)\right\|_{L^{2}_{x}}\d s\lesssim C_{0}^{4}\varepsilon^{4}.
		\end{equation}
		
		\item \emph{Weighted $L_{t}^{1}L_{x}^{1}$ estimate on $S\Gamma^{I} G$.} We have 
		\begin{equation}\label{est:GSBLtx1}
		\sum_{|I|\le N-8}\int_{0}^{t}(1+s)^{-\frac{1}{2}}\left\|S\Gamma^{I}G(w,v)\right\|_{L^{1}_{x}}\d s\lesssim C_{0}^{4}\varepsilon^{4}.
		\end{equation}
		
		\item \emph{Weighted $L_{t}^{1}L_{x}^{1}$ estimate on $S^{2}\Gamma^{I}G$.} We have 
		\begin{equation}\label{est:GSBLtx2}
	\sum_{|I|\le N-9}\int_{0}^{t}(1+s)^{-\frac{1}{2}}\left\|S^{2}\Gamma^{I}G(w,v)\right\|_{L^{1}_{x}}\d s\lesssim C_{0}^{4}\varepsilon^{4}.
	\end{equation}
	\end{enumerate} 
\end{lemma}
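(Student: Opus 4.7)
The plan is to reduce all four estimates to product estimates for the quartic structure $G(w,v) = \tfrac{C_1^2}{2} v^2\bigl(C_1 Q_0(w,v) + C_1^{\alpha\beta}Q_{\alpha\beta}(w,v)\bigr)$ from Lemma~\ref{le:equQ0Qab}, and then to exploit the bootstrap bounds together with the $L^2$ and $L^1_t L^2_x$ estimates for $\Gamma^I Q(w,v)$ already established in Lemma~\ref{le:estQ0}.

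Setup. Applying the Leibniz rule and the commutator identity~\eqref{est:parGamma}, I expand
\begin{equation*}
\Gamma^I G(w,v) = \sum c_{I_1 I_2 I_3}\,(\Gamma^{I_1} v)(\Gamma^{I_2} v)\,\Gamma^{I_3}\widetilde Q(w,v),
\qquad |I_1|+|I_2|+|I_3|\le |I|,
\end{equation*}
where $\widetilde Q\in\{C_1 Q_0,\,C_1^{\alpha\beta}Q_{\alpha\beta}\}$. For parts (iii)--(iv), the additional $S$ or $S^2$ is distributed by Leibniz onto the individual factors. Whenever $S$ lands on a $v$-factor (for which no bootstrap bound is available), I use $S=t\partial_t+x^i\partial_i$ together with $|\partial\Gamma^J v|\lesssim \langle t+r\rangle^{-1}$ from~\eqref{est:Bootv} for $|J|\le N-6$ to obtain the pointwise bound $|S\Gamma^J v|\lesssim 1$ and $|S^2\Gamma^J v|\lesssim \langle t+r\rangle$. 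When $S$ lands on a derivative of $w$, I use the bootstrap bounds~\eqref{est:Bootw} and~\eqref{est:pointwavew} on $S\Gamma^J w$ directly.

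Parts (i) and (ii). For each term in the expansion I apply H\"older, placing low-order factors in $L^\infty_x$ and at most one or two high-order factors in $L^2_x$. The two $v$-factors in $G$ together produce at least $\langle s\rangle^{-2}$ decay (with both low-order, we use $|\Gamma^I v|\lesssim \langle s\rangle^{-1}$ for $|I|\le N-5$ from~\eqref{est:Bootv}). The remaining $\Gamma^{I_3}Q(w,v)$ factor is then controlled either pointwise in time via~\eqref{est:L2Q0}, or after integration in time via~\eqref{est:LtLx1Q01}--\eqref{est:LtLx1Q02}. In every case the resulting $s$-integral is integrable and bounded by $C_0^4\varepsilon^4$. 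The key book-keeping observation is that since $N\ge 14$, among the indices $I_1,I_2,I_3$ at most one can exceed the pointwise threshold $N-5$ (or $N-9$ for a derivative of $w$), so there is always a legitimate distribution of norms.

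Parts (iii) and (iv). The thresholds $|I|\le N-8$ and $|I|\le N-9$ are chosen precisely so that after $S$ or $S^2$ is distributed, the remaining $\Gamma$-count on each factor stays within the bootstrap ranges. The main obstacle is (iv): since $|S^2 v|\lesssim \langle t+r\rangle$ loses a full power of $t$, a direct product bound on $v^2(S^2v)Q$ is not integrable. The resolution combines three ingredients: (a) the quadratic factor $v^2$ contributes $\langle t+r\rangle^{-2}$; (b) the null structure of $Q$ through~\eqref{est:Q0decay} supplies the additional $\langle t+r\rangle^{-1}$ from $|Q(w,v)|\lesssim \langle t+r\rangle^{-1}(|Sw|+|\Gamma w|)|\Gamma v|$; and (c) the weight $(1+s)^{-1/2}$ in the statement. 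Together these convert every term produced by the Leibniz expansion into an integrable function of $s$, and summing over cases yields the $C_0^4\varepsilon^4$ bound. I expect the bulk of the work to be the routine (but tedious) verification that, in each branching of the Leibniz expansion for $S^2\Gamma^I G$, at least one of the $v$-factors can be kept low-order so that the pointwise bound $\langle t+r\rangle^{-1}$ is available; this is where the restriction $|I|\le N-9$ is used.
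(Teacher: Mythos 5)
The approach you take for parts (iii) and (iv) -- distributing $S$ and $S^2$ by Leibniz onto the individual factors of the quartic product -- is genuinely different from the paper's, and for part (iv) it runs into a real obstruction.

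The paper never distributes $S$ onto the factors at all. Instead it converts $|S\Gamma^I(v^2Q)|\lesssim\langle s+r\rangle|\partial\Gamma^I(v^2Q)|$ (and $|S^2\Gamma^I(v^2Q)|\lesssim\langle s+r\rangle^2|\partial\partial\Gamma^I(v^2Q)|$) \emph{first}, absorbing the $S$'s entirely into the weight $\langle s+r\rangle$ and extra translations $\partial$. Only afterwards does it invoke the null structure, through the weighted pointwise inequality~\eqref{est:txQ0},
\[
\left|\langle t+r\rangle\Gamma^{I}Q(w,v)\right|\lesssim \sum\bigl(|S\Gamma^{I_{1}}w|+|\Gamma^{J_{1}}\Gamma^{I_{1}}w|\bigr)|\Gamma^{J_{2}}\Gamma^{I_{2}}v|,
\]
which eats one power of $\langle s+r\rangle$ and produces \emph{at most one} $S$, and only on $w$. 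For the $S^2$ case, the remaining $\langle s+r\rangle$ is placed on a low-order $v$-factor via the bootstrap bound $\|\langle s+r\rangle\Gamma^{I}v\|_{L^\infty}\lesssim C_0\varepsilon$. Consequently the paper never needs an estimate on $S^2w$, $S\partial w$, or on $Sv$ at all.

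Your Leibniz distribution, by contrast, unavoidably produces terms where both copies of $S$ fall on the $\partial w$ factor inside $Q$, giving $Q(S^2\Gamma^J w,\Gamma^K v)$ or, pointwise, $\partial S^2\Gamma^J w$. There is no bootstrap bound on $S^2 w$ anywhere in~\eqref{est:Bootw} or~\eqref{est:pointwavew} (only first powers of $S$ acting on $w$ appear), and commuting does not help: $S^2\partial\Gamma^J w$ rewrites as $\partial S^2\Gamma^{J}w$ plus lower order, which still carries two scaling fields. Applying the null form decay~\eqref{est:Q0decay} to $Q(S^2 w,v)$ afterwards makes things worse, producing $S^3 w$. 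This is the gap. Moreover, your budget for (iv) double-counts the decay: if $S^2$ lands on one of the two $v$-factors, the pair contributes $|S^2 v|\cdot|v|\lesssim \langle t+r\rangle\cdot\langle t+r\rangle^{-1}\sim 1$, not $\langle t+r\rangle^{-2}$ as you state, so the ledger should be $1\cdot\langle t+r\rangle^{-1}\cdot\langle s\rangle^{-1/2}\cdot\langle s\rangle^{2\delta}$ (which is still integrable, so this case is fine -- it's the $S^2$-on-$Q$ case that breaks). Also note that in part (iii) the factor $S(\partial\Gamma^J v)$ from $Q(w,Sv)$ requires a pointwise bound; your $|S\Gamma^J v|\lesssim 1$ only gives this after a further commutation and one extra order of $v$-regularity, which must be tracked. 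To repair the argument you should adopt the paper's ordering: first trade $S^k$ for $\langle s+r\rangle^k$ weights and additional $\Gamma$'s, then spend these weights via~\eqref{est:txQ0} and the bootstrap bound on $\langle s+r\rangle\Gamma^I v$, so that no more than a single $S$ ever lands on $w$.
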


\begin{proof}
	Proof of (i). For all $Q\in \left\{Q_{0};\ Q_{\alpha\beta}, \alpha\ne \beta\right\}$, we claim that
	\begin{equation}\label{est:v2QLtLx1}
	\sum_{|I|\le N}\int_{0}^{t}\left\|\Gamma^{I}\left(v^{2}Q(w,v)\right)\right\|_{L_{x}^{1}}\d s \lesssim C_{0}^{4}\varepsilon^{4}.
	\end{equation}
	Indeed, from the definition of $Q$ and $N\ge 14$, we infer
	\begin{equation*}
	\sum_{|I|\le N}\int_{0}^{t}\left\|\Gamma^{I}\left(v^{2}Q(w,v)\right)\right\|_{L_{x}^{1}}\d s\lesssim \mathcal{J}_{51}+\mathcal{J}_{52},
	\end{equation*}
	where
	\begin{align*}
	\mathcal{J}_{51}&=\sum_{\substack{|I_{1}|\le N\\ |I_{2}|\le N-5}}\sum_{\substack{|I_{3}|\le N\\ |I_{4}|\le N-6}}\int_{0}^{t}\int_{\R^{2}}\left|\left(\Gamma^{I_{1}}v\right)\left(\Gamma^{I_{2}}v\right)\left(\partial\Gamma^{I_{3}}w\right)\left(\partial\Gamma^{I_{4}}v\right)\right|\d x\d s,\\
	\mathcal{J}_{52}&=\sum_{\substack{|I_{1}|\le N\\ |I_{2}|\le N-5}}\sum_{\substack{|I_{3}|\le N-5\\ |I_{4}|\le N}}\int_{0}^{t}\int_{\R^{2}}\left|\left(\Gamma^{I_{1}}v\right)\left(\Gamma^{I_{2}}v\right)\left(\partial\Gamma^{I_{3}}w\right)\left(\partial\Gamma^{I_{4}}v\right)\right|\d x\d s.
	\end{align*}
	From~\eqref{est:Bootw},~\eqref{est:Bootv},~\eqref{est:pointwavew} and the H\"older inequality, we have 
	\begin{align*}
	\mathcal{J}_{51}&\lesssim \sum_{\substack{|I_{1}|\le N\\ |I_{2}|\le N-5}}\sum_{\substack{|I_{3}|\le N\\ |I_{4}|\le N-6}}\int_{0}^{t}\left\|\Gamma^{I_{1}}v\right\|_{L_{x}^{2}}\left\|\Gamma^{I_{2}}v\right\|_{L_{x}^{\infty}}
	\left\|\partial\Gamma^{I_{3}}w\right\|_{L_{x}^{2}}\left\|\partial\Gamma^{I_{4}}v\right\|_{L_{x}^{\infty}}\d s\\
	&\lesssim C_{0}^{4}\varepsilon^{4}\int_{0}^{t}\langle s\rangle^{\delta}\langle s\rangle^{-1}\langle s\rangle^{\delta}\langle s\rangle^{-1}\d s \lesssim C_{0}^{4}\varepsilon^{4}\int_{0}^{t}\langle s\rangle^{-2+2\delta}\lesssim C_{0}^{4}\varepsilon^{4},
	\end{align*}
		\begin{align*}
	\mathcal{J}_{52}&\lesssim \sum_{\substack{|I_{1}|\le N\\ |I_{2}|\le N-5}}\sum_{\substack{|I_{3}|\le N-5\\ |I_{4}|\le N}}\int_{0}^{t}\left\|\Gamma^{I_{1}}v\right\|_{L_{x}^{2}}\left\|\Gamma^{I_{2}}v\right\|_{L_{x}^{\infty}}
	\left\|\partial\Gamma^{I_{3}}w\right\|_{L_{x}^{\infty}}\left\|\partial\Gamma^{I_{4}}v\right\|_{L_{x}^{2}}\d s\\
	&\lesssim C_{0}^{4}\varepsilon^{4}\int_{0}^{t}\langle s\rangle^{\delta}\langle s\rangle^{-1}\langle s\rangle^{-\frac{1}{2}}\langle s\rangle^{\delta}\d s \lesssim C_{0}^{4}\varepsilon^{4}\int_{0}^{t}\langle s\rangle^{-\frac{3}{2}+2\delta}\lesssim C_{0}^{4}\varepsilon^{4},
	\end{align*}
	which implies~\eqref{est:v2QLtLx1}. Last, we see that~\eqref{est:GBLtx1} follows from~\eqref{est:v2QLtLx1} and the definition of $G(w,v)$ in~Lemma~\ref{le:equQ0Qab}.
	
	Proof of (ii). For all $Q\in \left\{Q_{0};\ Q_{\alpha\beta}, \alpha\ne \beta\right\}$, we claim that
	\begin{equation}\label{est:v2QLtLx2}
	\sum_{|I|\le N}\int_{0}^{t}\left\|\Gamma^{I}\left(v^{2}Q(w,v)\right)\right\|_{L_{x}^{2}}\d s \lesssim C_{0}^{4}\varepsilon^{4}.
	\end{equation}
	Indeed, from the definition of $Q$, we infer
	\begin{equation*}
	\sum_{|I|\le N}\int_{0}^{t}\left\|\Gamma^{I}\left(v^{2}Q(w,v)\right)\right\|_{L_{x}^{1}}\d s\lesssim \mathcal{J}_{61}+\mathcal{J}_{62}+\mathcal{J}_{63}+\mathcal{J}_{64},
	\end{equation*}
	where
	\begin{equation*}
	\begin{aligned}
	\mathcal{J}_{61}&=\sum_{\substack{N-6 \le |I_{1}|\le N\\ 0\le |I_{2}|,|I_{3}|,|I_{4}|\le N-6}}
	\int_{0}^{t}\left\|\left(\Gamma^{I_{1}}v\right)\left(\Gamma^{I_{2}}v\right)\left(\partial\Gamma^{I_{3}}w\right)\left(\partial\Gamma^{I_{4}}v\right)\right\|_{L_{x}^{2}}\d s,\\
	\mathcal{J}_{62}&=\sum_{\substack{N-6 \le |I_{3}|\le N\\ 0\le |I_{1}|,|I_{2}|,|I_{4}|\le N-6}}
	\int_{0}^{t}\left\|\left(\Gamma^{I_{1}}v\right)\left(\Gamma^{I_{2}}v\right)\left(\partial\Gamma^{I_{3}}w\right)\left(\partial\Gamma^{I_{4}}v\right)\right\|_{L_{x}^{2}}\d s,\quad \quad 
	\end{aligned}
	\end{equation*}
	\begin{equation*}
	\begin{aligned}
	\mathcal{J}_{63}&=\sum_{\substack{N-6 \le |I_{4}|\le N\\ 0\le |I_{1}|,|I_{2}|,|I_{3}|\le N-6}}
	\int_{0}^{t}\left\|\left(\Gamma^{I_{1}}v\right)\left(\Gamma^{I_{2}}v\right)\left(\partial\Gamma^{I_{3}}w\right)\left(\partial\Gamma^{I_{4}}v\right)\right\|_{L_{x}^{2}}\d s,\\
		\mathcal{J}_{64}&=\sum_{ 0\le |I_{1}|,|I_{2}|,|I_{3}|,|I_{4}|\le N-6}
	\int_{0}^{t}\left\|\left(\Gamma^{I_{1}}v\right)\left(\Gamma^{I_{2}}v\right)\left(\partial\Gamma^{I_{3}}w\right)\left(\partial\Gamma^{I_{4}}v\right)\right\|_{L_{x}^{2}}\d s.
	\end{aligned}
	\end{equation*}
	From~\eqref{est:Bootw},~\eqref{est:Bootv} and~\eqref{est:pointwavew}, we see that
	\begin{equation*}
	\begin{aligned}
		\mathcal{J}_{61}
		&=\sum_{\substack{N-6 \le |I_{1}|\le N\\ 0\le |I_{2}|,|I_{3}|,|I_{4}|\le N-6}}
	\int_{0}^{t}\left\|\Gamma^{I_{1}}v\right\|_{L_{x}^{2}}\left\|\Gamma^{I_{2}}v\right\|_{L_{x}^{\infty}}\left\|\partial\Gamma^{I_{3}}w\right\|_{L_{x}^{\infty}}\left\|\partial\Gamma^{I_{4}}v\right\|_{L_{x}^{\infty}}\d s\\
	&\lesssim C_{0}^{4}\varepsilon^{4}\int_{0}^{t}\langle s\rangle^{\delta} \langle s\rangle^{-1} \langle s\rangle^{-\frac{1}{2}} \langle s\rangle^{-1} \d s \lesssim C_{0}^{4}\varepsilon^{4}\int_{0}^{t}\langle s\rangle^{-\frac{3}{2}}\d s\lesssim C_{0}^{4}\varepsilon^{4},
	\end{aligned}
	\end{equation*}
	\begin{equation*}
		\begin{aligned}
	\mathcal{J}_{62}
	&=\sum_{\substack{N-6 \le |I_{3}|\le N\\ 0\le |I_{1}|,|I_{2}|,|I_{4}|\le N-6}}
	\int_{0}^{t}\left\|\Gamma^{I_{1}}v\right\|_{L_{x}^{\infty}}\left\|\Gamma^{I_{2}}v\right\|_{L_{x}^{\infty}}\left\|\partial\Gamma^{I_{3}}w\right\|_{L_{x}^{2}}\left\|\partial\Gamma^{I_{4}}v\right\|_{L_{x}^{\infty}}\d s\\
	&\lesssim C_{0}^{4}\varepsilon^{4}\int_{0}^{t}\langle s\rangle^{-1} \langle s\rangle^{-1} \langle s\rangle^{\delta} \langle s\rangle^{-1} \d s \lesssim C_{0}^{4}\varepsilon^{4}\int_{0}^{t}\langle s\rangle^{-2}\d s\lesssim C_{0}^{4}\varepsilon^{4},
	\end{aligned}
	\end{equation*}
	\begin{equation*}
		\begin{aligned}
	\mathcal{J}_{63}
	&=\sum_{\substack{N-6 \le |I_{4}|\le N\\ 0\le |I_{1}|,|I_{2}|,|I_{3}|\le N-6}}
	\int_{0}^{t}\left\|\Gamma^{I_{1}}v\right\|_{L_{x}^{\infty}}\left\|\Gamma^{I_{2}}v\right\|_{L_{x}^{\infty}}\left\|\partial\Gamma^{I_{3}}w\right\|_{L_{x}^{\infty}}\left\|\partial\Gamma^{I_{4}}v\right\|_{L_{x}^{2}}\d s\\
	&\lesssim C_{0}^{4}\varepsilon^{4}\int_{0}^{t}\langle s\rangle^{-1} \langle s\rangle^{-1} \langle s\rangle^{-\frac{1}{2}} \langle s\rangle^{\delta} \d s \lesssim C_{0}^{4}\varepsilon^{4}\int_{0}^{t}\langle s\rangle^{-\frac{3}{2}}\d s\lesssim C_{0}^{4}\varepsilon^{4},
	\end{aligned}
	\end{equation*}
	\begin{equation*}
	\begin{aligned}
	\mathcal{J}_{64}
	&=\sum_{\substack{0\le |I_{1}||I_{2}|,|I_{3}|,|I_{4}|\le N-6}}
	\int_{0}^{t}\left\|\Gamma^{I_{1}}v\right\|_{L_{x}^{\infty}}\left\|\Gamma^{I_{2}}v\right\|_{L_{x}^{\infty}}\left\|\partial\Gamma^{I_{3}}w\right\|_{L_{x}^{\infty}}\left\|\partial\Gamma^{I_{4}}v\right\|_{L_{x}^{2}}\d s\\
	&\lesssim C_{0}^{4}\varepsilon^{4}\int_{0}^{t}\langle s\rangle^{-1} \langle s\rangle^{-1} \langle s\rangle^{-\frac{1}{2}} \langle s\rangle^{\delta} \d s \lesssim C_{0}^{4}\varepsilon^{4}\int_{0}^{t}\langle s\rangle^{-\frac{3}{2}}\d s\lesssim C_{0}^{4}\varepsilon^{4}.
	\end{aligned}
	\end{equation*}
	Combining the above estimates, we obtain~\eqref{est:v2QLtLx2}. Last, we see that~\eqref{est:GBLtx2} follows from~\eqref{est:v2QLtLx2} and the definition of $G(w,v)$ in Lemma~\ref{le:equQ0Qab}.
	
	Proof of (iii). For all $Q\in \left\{Q_{0};\ Q_{\alpha\beta},\ \alpha\ne \beta\right\}$, we claim that
	\begin{equation}\label{est:v2SQLtLx1}
	\sum_{|I|\le N-8}\left\|S\Gamma^{I}\left(v^{2}Q(w,v)\right)\right\|_{L_{x}^{2}}\lesssim C_{0}^{4}\varepsilon^{4}\langle s\rangle^{-1},\quad \mbox{for all}\ s\in [0,T_{*}(\vec{w}_{0},\vec{v}_{0})).
	\end{equation}
	Indeed, from the inequality $|S|\lesssim \langle s+r\rangle |\partial|$, we infer
	\begin{equation*}
	\sum_{|I|\le N-8}\left\|S\Gamma^{I}\left(v^{2}Q(w,v)\right)\right\|_{L_{x}^{1}}\lesssim
	\sum_{|I|\le N-7}\left\|\langle s+r\rangle\Gamma^{I}\left(v^{2}Q(w,v)\right)\right\|_{L_{x}^{1}}.
	\end{equation*}
	Then, using again~\eqref{est:txQ0}, we have 
	\begin{equation*}
	\sum_{|I|\le N-7}\left\|\langle s+r\rangle\Gamma^{I}\left(v^{2}Q(w,v)\right)\right\|_{L_{x}^{1}}\lesssim \mathcal{J}_{71}+\mathcal{J}_{72},
	\end{equation*}
	where
	\begin{equation*}
	\begin{aligned}
	\mathcal{J}_{71}&=\sum_{ \substack{0\le |I_{1}|,|I_{2}|\le N-6\\ 0\le |I_{3}|,|I_{4}|\le N-6 }}\left\|\left(\Gamma^{I_{1}}v\right)\left(\Gamma^{I_{2}}v\right)\left(\Gamma^{I_{3}}w\right)\left(\Gamma^{I_{4}}v\right)\right\|_{L^{1}_{x}},\\
	\mathcal{J}_{72}&=\sum_{ \substack{0\le |I_{1}|,|I_{2}|\le N-6\\ 0\le |I_{3}|,|I_{4}|\le N-6 }}\left\|\left(\Gamma^{I_{1}}v\right)\left(\Gamma^{I_{2}}v\right)\left(S\Gamma^{I_{3}}w\right)\left(\Gamma^{I_{4}}v\right)\right\|_{L^{1}_{x}}.
	\end{aligned}
	\end{equation*}
	Note that, from~\eqref{est:Bootw},~\eqref{est:Bootv} and the H\"older inequality, we have 
	\begin{equation*}
	\begin{aligned}
	\mathcal{J}_{71}&\lesssim \sum_{ \substack{0\le |I_{1}|,|I_{2}|\le N-6\\ 0\le |I_{3}|,|I_{4}|\le N-6 }}\left\|\Gamma^{I_{1}}v\right\|_{L_{x}^{\infty}}\left\|\Gamma^{I_{2}}v\right\|_{L_{x}^{\infty}}\left\|\Gamma^{I_{3}}w\right\|_{L_{x}^{2}}\left\|\Gamma^{I_{4}}v\right\|_{L_{x}^{2}}\\
	&\lesssim C_{0}^{4}\varepsilon^{4}\langle s\rangle^{-1} \langle s\rangle^{-1}\langle s \rangle^{\delta}\langle s\rangle^{\delta} \lesssim C_{0}^{4}\varepsilon^{4}\langle s\rangle^{-2+2\delta}\lesssim C_{0}^{4}\varepsilon^{4}\langle s\rangle^{-1},
	\end{aligned}
	\end{equation*}
	\begin{equation*}
	\begin{aligned}
	\mathcal{J}_{72}&\lesssim \sum_{ \substack{0\le |I_{1}|,|I_{2}|\le N-6\\ 0\le |I_{3}|,|I_{4}|\le N-6 }}\left\|\Gamma^{I_{1}}v\right\|_{L_{x}^{\infty}}\left\|\Gamma^{I_{2}}v\right\|_{L_{x}^{\infty}}\left\|S\Gamma^{I_{3}}w\right\|_{L_{x}^{2}}\left\|\Gamma^{I_{4}}v\right\|_{L_{x}^{2}}\\
	&\lesssim C_{0}^{4}\varepsilon^{4}\langle s\rangle^{-1} \langle s\rangle^{-1}\langle s \rangle^{\delta}\langle s\rangle^{\delta} \lesssim C_{0}^{4}\varepsilon^{4}\langle s\rangle^{-2+2\delta}\lesssim C_{0}^{4}\varepsilon^{4}\langle s\rangle^{-1},
	\end{aligned}
	\end{equation*}
	which implies~\eqref{est:v2SQLtLx1}. Last, multiplying both sides of~\eqref{est:v2SQLtLx1} by $(1+s)^{-\frac{1}{2}}$ and then integrating on $[0,t]$, we obtain~\eqref{est:GSBLtx1}.
	\end{proof}

Proof of (iv). For all $Q\in \left\{Q_{0};\ Q_{\alpha\beta},\alpha\ne \beta\right\}$, we claim that
\begin{equation}\label{est:v2SQLtLx2}
\sum_{|I|\le N-9}\left\|S^{2}\Gamma^{I}\left(v^{2}Q(w,v)\right)\right\|_{L_{x}^{1}}\lesssim C_{0}^{4}\varepsilon^{4}\langle s\rangle^{-1+2\delta},\quad \mbox{for all}\ s\in [0,T_{*}).
\end{equation}
Indeed, from the inequality $|S^{2}|\lesssim \langle s+r\rangle^{2} |\partial\partial|$, we infer
\begin{equation*}
\sum_{|I|\le N-9}\left\|S^{2}\Gamma^{I}\left(v^{2}Q(w,v)\right)\right\|_{L_{x}^{1}}\lesssim
\sum_{|I|\le N-7}\left\|\langle s+r\rangle^{2}\Gamma^{I}\left(v^{2}Q(w,v)\right)\right\|_{L_{x}^{1}}.
\end{equation*}
Then, using again~\eqref{est:txQ0}, we have 
\begin{equation*}
\sum_{|I|\le N-7}\left\|\langle s+r\rangle^{2}\Gamma^{I}\left(v^{2}Q(w,v)\right)\right\|_{L_{x}^{1}}\lesssim \mathcal{J}_{81}+\mathcal{J}_{82},
\end{equation*}
where
\begin{equation*}
\begin{aligned}
\mathcal{J}_{81}&=\sum_{ \substack{0\le |I_{1}|,|I_{2}|\le N-6\\ 0\le |I_{3}|,|I_{4}|\le N-6 }}\left\|\langle s+r \rangle\left(\Gamma^{I_{1}}v\right)\left(\Gamma^{I_{2}}v\right)\left(\Gamma^{I_{3}}w\right)\left(\Gamma^{I_{4}}v\right)\right\|_{L^{1}_{x}},\\
\mathcal{J}_{82}&=\sum_{ \substack{0\le |I_{1}|,|I_{2}|\le N-6\\ 0\le |I_{3}|,|I_{4}|\le N-6 }}\left\|\langle s+r \rangle\left(\Gamma^{I_{1}}v\right)\left(\Gamma^{I_{2}}v\right)\left(S\Gamma^{I_{3}}w\right)\left(\Gamma^{I_{4}}v\right)\right\|_{L^{1}_{x}}.
\end{aligned}
\end{equation*}
Note that, from~\eqref{est:Bootw},~\eqref{est:Bootv} and the H\"older inequality, we have 
\begin{equation*}
\begin{aligned}
\mathcal{J}_{81}&\lesssim \sum_{ \substack{0\le |I_{1}|,|I_{2}|\le N-6\\ 0\le |I_{3}|,|I_{4}|\le N-6 }}\left\|\langle s+r \rangle\Gamma^{I_{1}}v\right\|_{L_{x}^{\infty}}\left\|\Gamma^{I_{2}}v\right\|_{L_{x}^{\infty}}\left\|\Gamma^{I_{3}}w\right\|_{L_{x}^{2}}\left\|\Gamma^{I_{4}}v\right\|_{L_{x}^{2}}\\
&\lesssim C_{0}^{4}\varepsilon^{4} \langle s\rangle^{-1}\langle s \rangle^{\delta}\langle s\rangle^{\delta} \lesssim C_{0}^{4}\varepsilon^{4}\langle s\rangle^{-1+2\delta},
\end{aligned}
\end{equation*}
\begin{equation*}
\begin{aligned}
\mathcal{J}_{82}&\lesssim \sum_{ \substack{0\le |I_{1}|,|I_{2}|\le N-6\\ 0\le |I_{3}|,|I_{4}|\le N-6 }}\left\|\langle s+r \rangle\Gamma^{I_{1}}v\right\|_{L_{x}^{\infty}}\left\|\Gamma^{I_{2}}v\right\|_{L_{x}^{\infty}}\left\|S\Gamma^{I_{3}}w\right\|_{L_{x}^{2}}\left\|\Gamma^{I_{4}}v\right\|_{L_{x}^{2}}\\
&\lesssim C_{0}^{4}\varepsilon^{4} \langle s\rangle^{-1}\langle s \rangle^{\delta}\langle s\rangle^{\delta} \lesssim C_{0}^{4}\varepsilon^{4}\langle s\rangle^{-1+2\delta},
\end{aligned}
\end{equation*}
which implies~\eqref{est:v2SQLtLx2}.  Last, multiplying both sides of~\eqref{est:v2SQLtLx2} by $(1+s)^{-\frac{1}{2}}$ and then integrating on $[0,t]$, we obtain~\eqref{est:GSBLtx2}.
\subsection{End of the proof of Proposition~\ref{prop:main}} We are in a position to complete the proof of Proposition~\ref{prop:main}.

\begin{proof}[Proof of Proposition~\ref{prop:main}]\label{Se:mainpro}
	For all $(\vec{w}_{0},\vec{v}_{0})$ satisfying the smallness condition~\eqref{est:small}, we consider the corresponding solution $(w,v)$ of~\eqref{equ:waveKG}. From the definition of initial data, we have 
	\begin{equation}\label{est:inita}
	\sum_{|I|\le N}\left(\mathcal{E}(0,\Gamma^{I}w)^{\frac{1}{2}}+\mathcal{G}(0,\Gamma^{I}w)^{\frac{1}{2}}+\mathcal{E}_{1}(0,\Gamma^{I}v)^{\frac{1}{2}}\right)\lesssim \varepsilon.
	\end{equation}
	
	{\textbf{Step 1.}} Closing the estimates in $\mathcal{E}(t,\Gamma^{I}w)$ and $\mathcal{E}_{1}(t,\Gamma^{I}v)$. 
	Using~\eqref{equ:waveKG}, \eqref{est:EnerE}, \eqref{est:EnergK}, ~\eqref{est:LtLx1Q01},~\eqref{est:LtLx1Q02} and~\eqref{est:inita}, we deduce that
	\begin{equation*}
	\begin{aligned}
	&\sum_{|I|\le N-1}\left(\mathcal{E}(t,\Gamma^{I}w)^{\frac{1}{2}}+\mathcal{E}_{1}(t,\Gamma^{I}v)^{\frac{1}{2}}\right)\\
	&\lesssim 	\sum_{|I|\le N-1}\left(\mathcal{E}(0,\Gamma^{I}w)^{\frac{1}{2}}+\mathcal{E}_{1}(0,\Gamma^{I}v)^{\frac{1}{2}}\right)
	+\sum_{|I|\le N-1}\int_{0}^{t}\|\Gamma^{I}Q_{0}(w,v)\|_{L_{x}^{2}}\d s \\
	&+\sum_{\alpha\ne \beta}\sum_{|I|\le N-1}\int_{0}^{t}\|\Gamma^{I}Q_{\alpha\beta}(w,v)\|_{L_{x}^{2}}\d s 
	\lesssim \varepsilon+C_{0}^{2}\varepsilon^{2}\quad \mbox{on}\ [0,T_{*}(\vec{w}_{0},\vec{v}_{0})),
	\end{aligned}
	\end{equation*} 
	and
		\begin{equation*}
	\begin{aligned}
	&\sum_{|I|\le N}\left(\mathcal{E}(t,\Gamma^{I}w)^{\frac{1}{2}}+\mathcal{E}_{1}(t,\Gamma^{I}v)^{\frac{1}{2}}\right)\\
	&\lesssim 	\sum_{|I|\le N}\left(\mathcal{E}(0,\Gamma^{I}w)^{\frac{1}{2}}+\mathcal{E}_{1}(0,\Gamma^{I}v)^{\frac{1}{2}}\right)
	+\sum_{|I|\le N}\int_{0}^{t}\|\Gamma^{I}Q_{0}(w,v)\|_{L_{x}^{2}}\d s \\
	&+\sum_{\alpha\ne \beta}\sum_{|I|\le N}\int_{0}^{t}\|\Gamma^{I}Q_{\alpha\beta}(w,v)\|_{L_{x}^{2}}\d s 
	\lesssim \varepsilon+C_{0}^{2}\varepsilon^{2}\langle t\rangle^{\delta}\quad \mbox{on}\ [0,T_{*}(\vec{w}_{0},\vec{v}_{0})).
	\end{aligned}
	\end{equation*} 
	These strictly improve the bootstrap estimates of $\mathcal{E}(t,\Gamma^{I}w)$ and $\mathcal{E}_{1}(t,\Gamma^{I}v)$ in~\eqref{est:Bootw} and~\eqref{est:Bootv} for $C_{0}$ large enough and $\varepsilon$ small enough (depending on $C_{0}$).
	
	\smallskip
	{\textbf{Step 2.}} Closing the estimates in $\left\|\Gamma^{I}w\right\|_{L_{x}^{2}}$. We decompose the wave component $w$ as (see also \cite{Kataya})
	\begin{equation*}
	w=\Upsilon_{0}+\Upsilon_{1}+\partial^{\alpha}\Psi_{\alpha}+\partial_{\alpha}\Phi^{\alpha},
	\end{equation*}
	where $\Upsilon_{0}$, $\Upsilon_{1}$, $\Psi_{\alpha}$ and $\Phi^{\alpha}$ are the solutions for the following 2D linear homogeneous or inhomogeneous wave equations,
	\begin{equation}\label{equ:wavedecom}
	\left\{\begin{aligned}
	-\Box \Upsilon_{0}&=0,\quad \quad \quad \ \ \mbox{with}\ \ (\Upsilon_{0},\partial_{t}\Upsilon_{0})_{|t=0}=(w_{0},w_{1}),\\
	-\Box \Upsilon_{1}&=G(w,v),\quad \mbox{with}\ \ (\Upsilon_{1},\partial_{t}\Upsilon_{1})_{|t=0}=(0,0),\\
	-\Box \Psi_{\alpha}&=F_{\alpha}(w,v),\ \ \mbox{with}\ \ (\Psi_{\alpha},\partial_{t}\Psi_{\alpha})_{|t=0}=(0,0),\\
		-\Box \Phi^{\alpha}&=H^{\alpha}(w,v),\ \ \mbox{with}\ \ (\Phi^{\alpha},\partial_{t}\Phi^{\alpha})_{|t=0}=(0,0).
	\end{aligned}\right.
	\end{equation}
	First, from the smallness condition~\eqref{est:small}, 
	\begin{equation*}
	\sum_{|I|\le N}\left(\left\|\Gamma^{I}\Upsilon_{0}(0,x)\right\|_{L_{x}^{2}}+\left\|\partial_{t}\Gamma^{I}\Upsilon_{0}(0,x)\right\|_{L_{x}^{2}}+\left\|\partial_{t}\Gamma^{I}\Upsilon_{0}(0,x)\right\|_{L_{x}^{1}}\right)\lesssim \varepsilon.
	\end{equation*}
	Thus, using~\eqref{est:L2wave}, for all $I\in \mathbb{N}^{6}$ with $|I|\le N$, we see that
	\begin{equation}\label{est:L2Ups}
	\begin{aligned}
	\left\|\Gamma^{I}\Upsilon_{0}(t,x)\right\|_{L_{x}^{2}}
	&\lesssim \left\|\Gamma^{I}\Upsilon_{0}(0,x)\right\|_{L_{x}^{2}}+\log^{\frac{1}{2}}(2+t)\left\|\partial_{t}\Gamma^{I}\Upsilon_{0}(0,x)\right\|_{L_{x}^{2}}\\
	&+\log^{\frac{1}{2}}(2+t)\left\|\partial_{t}\Gamma^{I}\Upsilon_{0}(0,x)\right\|_{L_{x}^{1}}\lesssim \varepsilon\log^{\frac{1}{2}}(2+t).
	\end{aligned}
	\end{equation}
	Second, from~\eqref{est:L2waveinh},~\eqref{est:GBLtx1} and~\eqref{est:GBLtx2}, for all $I\in \mathbb{N}^{6}$ with $|I|\le N$, we see that
		\begin{equation}\label{est:L2UPP}
	\begin{aligned}
\left\|\Gamma^{I}\Upsilon_{1}\right\|_{L_{x}^{2}}
&\lesssim \log^{\frac{1}{2}} (2+t)\int_{0}^{t}\left\|\Gamma^{I}G(w,v)\right\|_{L^{2}_{x}}\d s\\
&+\log^{\frac{1}{2}}(2+t)\int_{0}^{t}\left\|\Gamma^{I}G(w,v)\right\|_{L^{1}_{x}}\d s\lesssim  C_{0}^{4}\varepsilon^{4}\log^{\frac{1}{2}}(2+t).
	\end{aligned}
	\end{equation}
	
	Then, from~\eqref{est:EnerE},~\eqref{est:LtLxFH}, for all $I\in \mathbb{N}^{6}$ with $|I|\le N$, we infer
	\begin{equation*}
	\begin{aligned}
	\sum_{\alpha=0}^{2}\mathcal{E}(t,\Gamma^{I}\Psi_{\alpha})^{\frac{1}{2}}&\lesssim \int_{0}^{t}\left\|\Gamma^{I}F_{\alpha}(w,v)\right\|_{L_{x}^{2}}\d s\lesssim C_{0}^{\frac{3}{2}}\varepsilon^{\frac{3}{2}}\langle t\rangle^{\delta},\\
	\sum_{\alpha=0}^{2}\mathcal{E}(t,\Gamma^{I}\Phi^{\alpha})^{\frac{1}{2}}&\lesssim \int_{0}^{t}\left\|\Gamma^{I}H^{\alpha}(w,v)\right\|_{L_{x}^{2}}\d s\lesssim C_{0}^{\frac{3}{2}}\varepsilon^{\frac{3}{2}}\langle t\rangle^{\delta}.
	\end{aligned}
	\end{equation*}
	Based on the above estimates, for all $I\in \mathbb{N}^{6}$ with $|I|\le N$, we see that  
	\begin{equation}\label{est:L2PsiPhi}
	\begin{aligned}
	\sum_{\alpha=0}^{2}\left\|\partial \Gamma^{I}\Psi_{\alpha}\right\|_{L_{x}^{2}}
	&\lesssim \sum_{\alpha=0}^{2}\mathcal{E}_{1}(t,\Gamma^{I}\Psi_{\alpha})^{\frac{1}{2}}\lesssim C_{0}^{\frac{3}{2}}\varepsilon^{\frac{3}{2}}\langle t\rangle^{\delta},\\
	\sum_{\alpha=0}^{2}\left\|\partial \Gamma^{I}\Phi^{\alpha}\right\|_{L_{x}^{2}}
	&\lesssim \sum_{\alpha=0}^{2}\mathcal{E}_{1}(t,\Gamma^{I}\Phi^{\alpha})^{\frac{1}{2}}\lesssim C_{0}^{\frac{3}{2}}\varepsilon^{\frac{3}{2}}\langle t\rangle^{\delta}.
	\end{aligned}
	\end{equation}
	Combining~\eqref{est:parGamma},~\eqref{est:L2Ups},~\eqref{est:L2UPP} and~\eqref{est:L2PsiPhi}, we obtain
	\begin{equation}\label{est:L2wimp}
	\begin{aligned}
	\sum_{|I|\le N}\|\Gamma^{I}w\|_{L_{x}^{2}}
	&\lesssim \sum_{\alpha=0}^{2}\sum_{k=0,1}\sum_{|I|\le N}\left(\left\|\Gamma^{I}\Upsilon_{k}\right\|_{L_{x}^{2}}+\left\|\partial\Gamma^{I}\Psi_{\alpha}\right\|_{L_{x}^{2}}+\left\|\partial\Gamma^{I}\Phi^{\alpha}\right\|_{L_{x}^{2}}\right)\\
	&\lesssim \left(\varepsilon+C_{0}^{4}\varepsilon^{4}\right)\log^{\frac{1}{2}}(2+t)+C_{0}^{\frac{3}{2}}\varepsilon^{\frac{3}{2}}\langle t\rangle^{\delta}\lesssim \left(\varepsilon+C_{0}^{\frac{3}{2}}\varepsilon^{\frac{3}{2}}\right)\langle t\rangle^{\delta}.
	\end{aligned}
	\end{equation}
	This strictly improves the bootstrap estimates of $\|\Gamma^{I}w\|_{L_{x}^{2}}$ in~\eqref{est:Bootw} for $C_{0}$ large enough and $\varepsilon$ small enough (depending on $C_{0}$).
	
	\smallskip
	{\textbf{Step 3.}} Closing the estimates in $\left\|S {\Gamma}^{I}w\right\|_{L_{x}^{2}}$. From~\eqref{est:ConE},~\eqref{est:LtLxwQ01},~\eqref{est:LtLxwQ02} and~\eqref{est:inita}, we see that
	\begin{equation*}
	\begin{aligned}
	\sum_{|I|\le N-6}\mathcal{G}(t,\Gamma^{I}w)^{\frac{1}{2}}
	&\lesssim \sum_{|I|\le N-6}\mathcal{G}(0,\Gamma^{I}w)^{\frac{1}{2}}+\sum_{|I|\le N-6}\int_{0}^{t}\|\langle s+r\rangle \Gamma^{I}Q_{0}(w,v)\|_{L_{x}^{2}}\d s\\
	&+\sum_{|I|\le N-6}\sum_{\alpha\ne \beta}\int_{0}^{t}\|\langle s+r\rangle \Gamma^{I}Q_{\alpha\beta}(w,v)\|_{L_{x}^{2}}\d s\lesssim \left(\varepsilon+C_{0}^{2}\varepsilon^{2}\right)\langle t\rangle^{{\delta}},
	\end{aligned}
	\end{equation*}
		\begin{equation*}
	\begin{aligned}
	\sum_{|I|\le N-1}\mathcal{G}(t,\Gamma^{I}w)
	&\lesssim \sum_{|I|\le N-1}\mathcal{G}(0,\Gamma^{I}w)+\sum_{|I|\le N-1}\int_{0}^{t}\|\langle s+r\rangle \Gamma^{I}Q_{0}(w,v)\|_{L_{x}^{2}}\d s\\
	&+\sum_{|I|\le N-1}\sum_{\alpha\ne\beta}\int_{0}^{t}\|\langle s+r\rangle \Gamma^{I}Q_{\alpha\beta}(w,v)\|_{L_{x}^{2}}\d s\lesssim \left(\varepsilon+C_{0}^{2}\varepsilon^{2}\right)\langle t\rangle^{\frac{1}{2}+2{\delta}}.
	\end{aligned}
	\end{equation*}
	Combining the above estimates with~\eqref{est:L2wimp}, we obtain
	\begin{equation*}
	\begin{aligned}
	\sum_{|I|\le N-6}\|S\Gamma^{I}w\|_{L_{x}^{2}}
	&\lesssim \sum_{|I|\le N-6}\mathcal{G}(t,\Gamma^{I}w)^{\frac{1}{2}}+\sum_{|I|\le N-6}\|\Gamma^{I}w\|_{L_{x}^{2}}\lesssim \left(\varepsilon+C_{0}^{\frac{3}{2}}\varepsilon^{\frac{3}{2}}\right)\langle t\rangle^{{\delta}},
	\end{aligned}
	\end{equation*}
		\begin{equation*}
	\sum_{|I|\le N-1}\|S\Gamma^{I}w\|_{L_{x}^{2}}
	\lesssim \sum_{|I|\le N-1}\mathcal{G}(t,\Gamma^{I}w)^{\frac{1}{2}}+\sum_{|I|\le N-1}\|\Gamma^{I}w\|_{L_{x}^{2}}\lesssim \left(\varepsilon+C_{0}^{\frac{3}{2}}\varepsilon^{\frac{3}{2}}\right)\langle t\rangle^{{\frac{1}{2}+2\delta}}.
	\end{equation*}
	These strictly improve the bootstrap estimates of $\|S\Gamma^{I}w\|_{L_{x}^{2}}$ in~\eqref{est:Bootw} for $C_{0}$ large enough and $\varepsilon$ small enough (depending on $C_{0}$).
	
	\smallskip
	{\textbf{Step 4.}} Closing the estimate in $\left|\partial \Gamma^{I} w\right|$. We split the proof into two parts according to different spacetime regions.
	
	Case I: Let $(t,x)\in \left\{(t,x)\in \R^{1+2}:r\ge 2t\right\}$. From the Step 2 and Step 3, we have 
	\begin{equation*}
	\sum_{|I|\le N-6}\left(\left\|S\Gamma^{I}w\right\|_{L_{x}^{2}}+\|\Gamma^{I}w\|_{L^{2}_{x}}\right)\lesssim \left(\varepsilon+C_{0}^{\frac{3}{2}}\varepsilon^{\frac{3}{2}}\right)\langle t\rangle^{\delta}.
	\end{equation*}
	Therefore, from~\eqref{est:parGamma} and~\eqref{est:SobolevGlobal}, we see that 
	\begin{equation*}
	\sum_{|I|\le N-9}\left(\left\|S\Gamma^{I}w\right\|_{L_{x}^{\infty}}+\left\|\Gamma^{I}w\right\|_{L_{x}^{\infty}}\right)\lesssim \left(\varepsilon+C_{0}^{\frac{3}{2}}\varepsilon^{\frac{3}{2}}\right)\langle t\rangle^{-\frac{1}{2}+\delta}.
	\end{equation*}
	Based on~\eqref{est:decaypm}, the above estimate and $r\ge 2t$, we have 
	\begin{equation*}
	\sum_{|I|\le N-10}\left|\partial\Gamma^{I}w(t,x)\right|\lesssim \left(\varepsilon+C_{0}^{\frac{3}{2}}\varepsilon^{\frac{3}{2}}\right)\langle t\rangle^{-\frac{1}{2}+\delta}\langle t-r\rangle^{-1}\lesssim \left(\varepsilon+C_{0}^{\frac{3}{2}}\varepsilon^{\frac{3}{2}}\right)\langle t\rangle^{-\frac{1}{2}}\langle t-r\rangle^{-\frac{3}{4}},
	\end{equation*}
	which strictly improves the estimate of $|\partial\Gamma^{I}w|$ in~\eqref{est:Bootw} for $C_{0}$ large enough and $\varepsilon$ small enough (depend on $C_{0}$).
	
	Case II: Let $(t,x)\in \left\{(t,x):r\le 2t\right\}$.
	Similar as in Step 2, we decompose the wave components $w$ as 
	\begin{equation*}
	w=\Upsilon_{0}+\Upsilon_{1}+\partial^{\alpha}\Psi_{\alpha}+\partial_{\alpha}\Phi^{\alpha}.
	\end{equation*}
	We claim that
	\begin{equation}\label{est:pwre2}
	\begin{aligned}
	\sum_{|I|\le N-9}\left(\left|\partial\Gamma^{I}\Upsilon_{0}(t,x)\right|+\left|\partial\Gamma^{I}\Upsilon_{1}(t,x)\right|\right)
	&\lesssim \left(\varepsilon+C_{0}^{2}\varepsilon^{2}\right)\langle t\rangle^{-\frac{1}{2}}\langle t-r\rangle^{-\frac{3}{4}},\\
	\sum_{\substack{|I|\le N-9\\\alpha=0,1,2 }}\left(\left|\partial\partial\Gamma^{I}\Psi_{\alpha}(t,x)\right|+\left|\partial\partial\Gamma^{I}\Phi^{\alpha}(t,x)\right|\right)&\lesssim \left(\varepsilon+C_{0}^{2}\varepsilon^{2}\right)\langle t\rangle^{-\frac{1}{2}}\langle t-r\rangle^{-\frac{3}{4}}.
	\end{aligned}
	\end{equation}
	\emph{Estimate on $\partial \Gamma^{I}\Upsilon_{0}$.} Indeed, from the smallness condition~\eqref{est:small} and~\eqref{equ:wavedecom},
	\begin{equation*}
	\begin{aligned}
	\sum_{|I|\le N-8}\left(\left\|\Gamma^{I}\Upsilon_{0}(0,x)\right\|_{W^{2,1}}+\left\|\partial_{t}\Gamma^{I}\Upsilon_{0}(0,x)\right\|_{W^{1,1}}\right)&\lesssim \varepsilon,\\
	\sum_{|I|\le N-8}\left(\left\|S\Gamma^{I}\Upsilon_{0}(0,x)\right\|_{W^{2,1}}+\left\|\partial_{t}S\Gamma^{I}\Upsilon_{0}(0,x)\right\|_{W^{1,1}}\right)&\lesssim \varepsilon.
	\end{aligned}
	\end{equation*}
	Thus, from $[\Box,S]=2\Box$, $[\Box,\Gamma_{k}]=0$,~\eqref{est:Linwave} and~\eqref{equ:wavedecom} for $\Upsilon_{0}$, we see that
	\begin{equation}\label{est:Up0Linf}
	\sum_{|I|\le N-8}\left(\left\|S\Gamma^{I}\Upsilon_{0}\right\|_{L_{x}^{\infty}}+\left\|\Gamma^{I}\Upsilon_{0}\right\|_{L^{\infty}_{x}}\right)\lesssim \varepsilon \langle t\rangle^{-\frac{1}{2}}.
	\end{equation}
	Based on the above estimates and~\eqref{est:decaypm}, we see that
	\begin{equation}\label{est:Up0}
	\begin{aligned}
	&\langle t-r\rangle\sum_{|I|\le N-9}\left|\partial\Gamma^{I}\Upsilon_{0}(t,x)\right|\\
	&\lesssim \sum_{|I|\le N-8}\left|S\Gamma^{I}\Upsilon_{0}\right\|_{L_{x}^{\infty}}
	+ \sum_{|I|\le N-8}\left\|\Gamma^{I}\Upsilon_{0}\right\|_{L^{\infty}_{x}}\lesssim \varepsilon \langle t\rangle^{-\frac{1}{2}}.
	\end{aligned}
	\end{equation}
	
	\emph{Estimate on $\partial \Gamma^{I}\Upsilon_{1}$.} From~\eqref{est:parGamma},~\eqref{est:Liniwaveinh},~\eqref{est:GBLtx1},~\eqref{est:GSBLtx1},~\eqref{est:GSBLtx2} and~\eqref{equ:wavedecom}, for all $I\in \mathbb{N}^{6}$ with $|I|\le N-8$, we have 
	\begin{equation}\label{est:Up1Linf}
	\begin{aligned}
	\left\|\Gamma^{I}\Upsilon_{1}\right\|_{L_{x}^{\infty}}&\lesssim \langle t\rangle^{-\frac{1}{2}}\int_{0}^{t}(1+s)^{-\frac{1}{2}}\|S\Gamma^{I}G(w,v)\|_{L_{x}^{1}}\d s \\
	&+\langle t\rangle^{-\frac{1}{2}}\sum_{|I|\le N-7}\int_{0}^{t}(1+s)^{-\frac{1}{2}}\|\Gamma^{I}G(w,v)\|_{L_{x}^{1}}\d s
	\lesssim C_{0}^{4}\varepsilon^{4}\langle t\rangle^{-\frac{1}{2}},
	\end{aligned}
	\end{equation}
	and for all $I\in \mathbb{N}^{6}$ with $|I|\le N-9$, we have
	\begin{equation*}
	\begin{aligned}
	\left\|S\Gamma^{I}\Upsilon_{1}\right\|_{L_{x}^{\infty}}&\lesssim \langle t\rangle^{-\frac{1}{2}}\int_{0}^{t}(1+s)^{-\frac{1}{2}}\|S^{2}\Gamma^{I}G(w,v)\|_{L_{x}^{1}}\d s \\
	&+\langle t\rangle^{-\frac{1}{2}}\sum_{|I|\le N-8}\int_{0}^{t}(1+s)^{-\frac{1}{2}}\|S\Gamma^{I}G(w,v)\|_{L_{x}^{1}}\d s\\
	&+\langle t\rangle^{-\frac{1}{2}}\sum_{|I|\le N-8}\int_{0}^{t}\|\Gamma^{I} G(w,v)\|_{L_{x}^{1}}\d s
	\lesssim C_{0}^{4}\varepsilon^{4}\langle t\rangle^{-\frac{1}{2}}.
	\end{aligned}
	\end{equation*}
	From the above two estimates and~\eqref{est:decaypm}, we see that
		\begin{equation}\label{est:Up1}
	\begin{aligned}
	&\langle t-r\rangle\sum_{|I|\le N-9}\left|\partial\Gamma^{I}\Upsilon_{1}(t,x)\right|\\
	&\lesssim \sum_{|I|\le N-9}\left\|S\Gamma^{I}\Upsilon_{1}\right\|_{L_{x}^{\infty}} 
	+ \sum_{|I|\le N-8}\left\|\Gamma^{I}\Upsilon_{1}\right\|_{L^{\infty}_{x}}\lesssim C_{0}^{4} \varepsilon^{4} \langle t\rangle^{-\frac{1}{2}}.
	\end{aligned}
	\end{equation}
	
	\emph{Estimate on $\partial\partial \Gamma^{I}\Psi_{\alpha}$.} To obtain a sharp pointwise decay of $\partial\partial \Gamma^{I}\Psi_{\alpha}$, we introduce a new variable by nonlinear transformation (see also~\cite{TSU}),
	\begin{equation*}
	\widetilde{\Psi}_{\alpha}=\Psi_{\alpha}+C_{1}v\partial_{\alpha} w,\quad \mbox{for}\ \alpha=0,1,2.
	\end{equation*} 
	Using~\eqref{equ:waveKG},~\eqref{equ:wavedecom} and an elementary computation, we have 
	\begin{equation*}
	-\Box \widetilde{\Psi}_{\alpha}=\widetilde{F}_{\alpha}(w,v),\quad 
	\left(\widetilde{\Psi}_{\alpha},\partial_{t}\widetilde{\Psi}_{\alpha}\right)_{|t=0}=C_{1}\left(v\partial_{\alpha}w,\partial_{t}(v\partial_{\alpha} w)\right)_{|t=0},
	\end{equation*}
where
\begin{equation*}
\begin{aligned}
\widetilde{F}_{\alpha}(w,v)
&=\frac{C_{1}^{2}}{2}v^{2}\partial_{\alpha} w+C_{1}v\left(C_{1}\partial_{\alpha}Q_{0}(w,v)+C_{1}^{\gamma\beta}\partial_{\alpha}Q_{\gamma\beta}(w,v)\right)\\
&-2C_{1}Q_{0}(\partial_{\alpha}w,v)+C_{1}\partial_{\alpha}w\left(C_{2}Q_{0}(w,v)+C_{2}^{\alpha\beta}Q_{\alpha\beta}(w,v)\right).
\end{aligned}
\end{equation*}
First, using again the smallness condition~\eqref{est:small}, 
\begin{equation*}
\sum_{|I|\le N-5}\mathcal{E}(0,\Gamma^{I}\widetilde{\Psi}_{\alpha})^{\frac{1}{2}}\lesssim \sum_{|I|\le N-4}\left\|\Gamma^{I}(v\partial w)(0,x)\right\|_{L^{2}_{x}}\lesssim \varepsilon.
\end{equation*}
Then, from~\eqref{est:extraL2} and the definition of $\widetilde{F}_{\alpha}$, we infer
\begin{equation*}
\sum_{|I|\le N-5}\int_{0}^{t}\left\|\Gamma^{I}\widetilde{F}_{\alpha}(w,v)\right\|_{L_{x}^{2}}\d s \lesssim C_{0}^{2}\varepsilon^{2}.
\end{equation*}
	Based on the above two estimates, we have 
	\begin{equation*}
	\sum_{|I|\le N-5}\mathcal{E}(t,\Gamma^{I}\widetilde{\Psi}_{\alpha})^{1/2} \lesssim \sum_{|I|\le N-5}\left(\mathcal{E}(0,\Gamma^{I}\widetilde{\Psi}_{\alpha})^{1/2} +\int_{0}^{t}\left\|\Gamma^{I}\widetilde{F}_{\alpha}(w,v)\right\|_{L_{x}^{2}}\d s\right) \lesssim \varepsilon+C_{0}^{2}\varepsilon^{2}.
	\end{equation*}
	From the above estimates,~\eqref{est:parGamma},~\eqref{est:SobolevGlobal},~\eqref{est:Bootv} and~\eqref{est:pointwavew},
	\begin{equation}\label{est:pxPsi}
	\begin{aligned}
	\sum_{|I|\le N-8}\left|\partial \Gamma^{I}\Psi_{\alpha}\right|
	&\lesssim \sum_{|I|\le N-8}\left(\left|\partial \Gamma^{I}\widetilde{\Psi}_{\alpha}\right|+\left|\Gamma^{I}(v\partial_{\alpha}w)\right|\right)\\
	&\lesssim \left(\varepsilon+C_{0}^{2}\varepsilon^{2}\right)\langle t\rangle^{-\frac{1}{2}}.
	\end{aligned}
	\end{equation}
	Combining the above estimate with~\eqref{est:extwaveHe},~\eqref{est:Bootv} and~\eqref{est:pointwavew}, we obtain 
	\begin{equation}\label{est:Psi}
	\begin{aligned}
	\sum_{|I|\le N-9}\langle t-r\rangle\left|\partial\partial\Gamma^{I}\Psi_{\alpha}(t,x)\right|
	&\lesssim \sum_{|I|\le N-8}\left(\left\|\partial \Gamma^{I}\Psi_{\alpha}\right\|_{L_{x}^{\infty}}+t\left\|\Gamma^{I}(v\partial w)\right\|_{L_{x}^{\infty}}\right)\\
	&\lesssim \left(\varepsilon+C_{0}^{2}\varepsilon^{2}\right)\langle t\rangle^{-\frac{1}{2}}.
	\end{aligned}
	\end{equation}
	
	\emph{Estimate on $\partial \partial \Gamma^{I}\Phi^{\alpha}$.}
	Using a similar argument as above, we also have
		\begin{equation}\label{est:pxPhi}
	\begin{aligned}
	\sum_{|I|\le N-8}\left|\partial \Gamma^{I}\Phi^{\alpha}\right|
	&\lesssim \left(\varepsilon+C_{0}^{2}\varepsilon^{2}\right)\langle t\rangle^{-\frac{1}{2}},
	\end{aligned}
	\end{equation}
	and so using again~\eqref{est:extwaveHe},~\eqref{est:Bootv} and~\eqref{est:pointwavew}, we obtain 
	\begin{equation}\label{est:Phi}
	\sum_{|I|\le N-9}\langle t-r\rangle\left|\partial\partial\Gamma^{I}\Phi^{\alpha}(t,x)\right|
	\lesssim \left(\varepsilon+C_{0}^{2}\varepsilon^{2}\right)\langle t\rangle^{-\frac{1}{2}}.
	\end{equation}
	Combining~\eqref{est:Up0},~\eqref{est:Up1},~\eqref{est:Psi} and~\eqref{est:Phi}, we obtain~\eqref{est:pwre2} which strictly improves  the estimate of $|\partial\Gamma^{I}w|$ in~\eqref{est:Bootw} for $C_{0}$ large enough and $\varepsilon$ small enough (depending on $C_{0}$).
	
		\smallskip
	{\textbf{Step 5.}} Closing the estimate in $\left\| w\right\|_{L_{x}^{\infty}}$. 
    Combining the estimates~\eqref{est:Up0Linf},~\eqref{est:Up1Linf},~\eqref{est:pxPsi} and~\eqref{est:pxPhi}, we obtain
    \begin{equation*}
    \|w\|_{L_{x}^{\infty}}\lesssim 
    \left\|\Upsilon_{0}\right\|_{L_{x}^{\infty}}+\left\|\Upsilon_{1}\right\|_{L_{x}^{\infty}}
    +\left\|\partial \Psi_{\alpha}\right\|_{L_{x}^{\infty}}+\left\|\partial \Phi^{\alpha}\right\|_{L_{x}^{\infty}}\lesssim (\varepsilon+C_{0}^{2}\varepsilon^{2})\langle t\rangle^{-\frac{1}{2}},
    \end{equation*}
    which strictly improves  the estimate of $\|w\|_{L_{x}^{\infty}}$ in~\eqref{est:Bootw} for $C_{0}$ large enough and $\varepsilon$ small enough (depending on $C_{0}$).
	
	\smallskip
	{\textbf{Step 6.}} Closing the estimates of $\Gamma^{I}v$ and $G_{i}\Gamma^{I}v$.
	 First, from~\eqref{est:small},~\eqref{est:L2Q0} and Corollary~\ref{coro:KGdecay}, we directly have
	\begin{equation*}
	\langle t+r\rangle\sum_{|I|\le N-5}\left|\Gamma^{I}v(t,x)\right|\lesssim \varepsilon+ C_{0}^{2}\varepsilon^{2},
	\end{equation*}
	 Then, from~\eqref{est:Bootw} and~\eqref{est:LtLx1Q02}, we see that 
	\begin{equation*}
	\begin{aligned}
	&\sum_{|I|\le N}\int_{0}^{t}\langle s\rangle^{-\delta}\|\Gamma^{I}Q(w,v)\|_{L_{x}^{2}}\|\partial_{t}\Gamma^{I}v\|_{L_{x}^{2}}\d s \\
	&\lesssim C_{0}\varepsilon \sum_{|I|\le N}\int_{0}^{t}\|\Gamma^{I}Q(w,v)\|_{L_{x}^{2}}\d s \lesssim 
	C_{0}^{\frac{5}{2}}\varepsilon^{\frac{5}{2}}\langle t\rangle^{\delta}.
	\end{aligned}
	\end{equation*}
	Therefore, from~\eqref{est:GhostGu} and \eqref{est:inita}, we obtain
	\begin{equation}
		\sum_{|I|\leq N, \,i=1,2}\int_{0}^{t}\langle s \rangle^{-\delta}\int_{\R^{2}}\left(\frac{|\Gamma^{I}v|^{2}}{\langle r-s\rangle^{\frac{3}{2}}}+\frac{|G_{i}\Gamma^{I}v|^{2}}{\langle r-s\rangle^{\frac{3}{2}}}\right)\d x \d s\lesssim \varepsilon^2+C_{0}^{\frac{5}{2}}\varepsilon^{\frac{5}{2}}\langle t\rangle^{\delta}.
	\end{equation}
	These strictly improve the estimates of $\Gamma^{I}v$ and $G_{i}\Gamma^{I}v$ in the bootstrap assumption~\eqref{est:Bootw} for $C_{0}$ large enough and $\varepsilon$ small enough (depending on $C_{0}$).
	
	\smallskip
	At this point, we have strictly improved all the bootstrap estimates of $(w,v)$ in~\eqref{est:Bootw} and~\eqref{est:Bootv}. In conclusion, for all initial data $(\vec{w}_{0},\vec{v}_{0})$ satisfying~\eqref{est:small}, we show that $T_{*}(\vec{w}_{0},\vec{v}_{0})=\infty$ and thus the proof of Proposition~\ref{prop:main} is complete.
\end{proof}

\end{document}